\newcommand{\mN}{\mathbb{N}}
\newcommand{\R}{\mathbb{R}}
\newcommand{\mP}{\mathbb{P}}
\newcommand{\mE}{\mathbb{E}}
\newcommand{\id}{\mathbb{I}}
\newcommand{\Ran}{\operatorname{Ran}}
\newcommand{\mi}{\mathds{1}}
\newcommand{\llangle}{{\langle\!\langle}}
\newcommand{\rrangle}{{\rangle\!\rangle}}
\newcommand{\ds}{\displaystyle}
\newcommand{\itemref}[1]{$\textit{\ref{#1}}$}
\renewcommand{\d}{\,\mathrm{d}}
\renewcommand{\div}{\operatorname{div}}
\def\a{{\alpha}}
\def\b{{\beta}}
\def\la{{\lambda}}
\def\g{{\gamma}}
\def\eps{{\varepsilon}}
\newtheorem{theorem}{Theorem}[section]
\newtheorem{proposition}[theorem]{Proposition}
\newtheorem{lemma}[theorem]{Lemma}
\theoremstyle{definition}
\newtheorem{definition}[theorem]{Definition} 
\newtheorem{example}[theorem]{Example} 
\theoremstyle{remark}
\newtheorem{remark}[theorem]{Remark}
\title{\vskip -1cm New \textit{a Priori} Estimate for Stochastic 2D Navier-Stokes               Equations with Applications to Invariant Measure
    } 
\author{
    Matteo Ferrari \\
    {\footnotesize Dept.~of Mathematics $``$Felice Casorati$"$, University of Pavia}\\
    {\footnotesize \texttt{matteo.ferrari13@universitadipavia.it}}\\
    {\footnotesize \texttt{https://orcid.org/0009-0000-6585-2210}}}
\date{\today}
\begin{document}

\maketitle

\begin{abstract}
The paper deals with the stochastic two-dimensional Navier-Stokes equation for incompressible fluids, set in a bounded domain with Dirichlet boundary conditions. We consider additive noise in the form $G\d W$, where $W$ is a cylindrical Wiener process and $G$ a bounded linear operator with range dense in the domain of $A^\gamma$, $A$ being the Stokes operator.  
While it is known that existence of invariant measure holds for $\gamma>1/4$, previous results show its uniqueness only for $\gamma > 3/8$. We fill this gap and prove uniqueness and strong mixing property in the range $\gamma \in (1/4, 3/8]$ by adapting the so-called Sobolevski\u{\i}-Kato-Fujita approach to the stochastic N-S equations. This method provides new \textit{a priori} estimates, which entail both better regularity in space for the solution and strong Feller and irreducibility properties for the associated Markov semigroup.
\\[3mm]
\noindent \textbf{Keywords:} 
Stochastic 2D Navier-Stokes equation;
path regularity;
uniqueness of invariant measure;
strong Feller property;
irreducibility;
strong mixing property;
ergodicity.\\[3mm]

\end{abstract}

\section{Introduction}

The Navier-Stokes equations provide a complete characterization of the motion of a viscous Newtonian fluid. For incompressible and homogeneous fluids, the N-S equations take the following form:
\begin{equation}\label{EQ:archeoNS}
    \begin{cases}
    \partial^{}_tu -\nu \Delta u+(Du)u=-\frac{1}{\rho}\nabla p+f\\
    \div u=0
    \end{cases},
\end{equation}
where the kinematic viscosity $\nu>0$ and the density $\rho>0$ are given constants, while $f$ denotes a known external force acting on the system. Here, the unknowns are $p$ and $u$, which represent respectively the pressure scalar field and the flow velocity vector field of the fluid. We set the equation in a non-empty bounded and connected open set $\mathcal D\subset\mathbb{R}^2$ with Lipschitz boundary. Consequently, we require $u, f: [0, +\infty) \times \overline{\mathcal D} \to \mathbb{R}^2$ and $p: [0, +\infty) \times \overline{\mathcal D} \to \mathbb{R}$. Moreover, we choose the units of measurement such that $\nu=1$. We associate equation \eqref{EQ:protoNS} with the Dirichlet boundary condition and an initial datum
\[
u=0 \quad \text{in } [0,+\infty)\times\partial \mathcal D\qquad\qquad u(0,\cdot)=u_0(\cdot) \quad \text{in } \overline{\mathcal D}.
\]

In order to study equation \eqref{EQ:archeoNS}, we introduce a basic Hilbert space $H\subset L^2(\mathcal D;\mathbb{R}^2)$ (see Section \ref{SEC:functionalsetting}), which incorporates both the boundary condition and the divergence-free condition. In Section \ref{SEC:operators}, we define the Leray projector $\Pi$ onto $H$, the Stokes operator $A=-\Pi\Delta$, and the Navier-Stokes non-linearity $B$ (\textit{cf.} \cite{temam2001navier,vishik1988mathematical,temam1995navier} and the references therein).
In the literature the case when the external force $f$ is random is extensively investigated starting from the seminal paper \cite{bensoussantemam1973equations}. Here we analyze the case when $f$ is formally the time derivative of a cylindrical Wiener process $W$ in $H$ (see Section \ref{SEC:brownian}), appropriately regularized by an injective and bounded linear operator $G:H\to H$. 
These modifications turn the equation into a stochastic differential equation in the infinite dimensional Hilbert space $H$
\begin{equation}\label{EQ:protoNS}
    \begin{cases}
        \d u+[Au+B(u)]\d t =G\d W_t\qquad \text{ for }t>0\\
        u(0)=x\in H
    \end{cases}.
\end{equation}

We point out that two special types of noise have been much investigated in the literature: the case of space-time Gaussian white noise (\textit{i.e.} $G=\id^{}_H$, see, for instance, \cite{dapratodebussche2002two,GubinelliJara2013Regularization,albeverio2004uniqueness,ferrario20192dnavier} and the references therein); and the case of finite dimensional noise (\textit{i.e.} finite dimensional range of $G$, see, for instance, \cite{hairer2006ergodicity,hairer2011atheory,Kupiainen2011Ergodicity}). 
We  discuss more about cylindrical and finite dimensional noises  in Remark \ref{REM:hairer}. In this paper we concentrate on a case which is close to the cylindrical one, \textit{i.e.} we assume that  
$\Ran(G)\subset D(A^{\frac14+\eps})$ for some $\eps>0$ (\textit{cf.}  
 \cite{flandoli1994dissipativity, FlandoliMaslowski1995ergodicity, ferrario1997ergodic}).

 Interesting topics related to equation \eqref{EQ:protoNS} include defining a suitable notion of solution that assures both its existence and pathwise uniqueness. Once the solution is proven to be a Markov process, one associates with it a Markov semigroup (see Section \ref{SEC:P_tmu}), which formally describes the mean behavior in time of the solution given the starting point. This leads to other lines of research, including the regularity of the semigroup and the existence, uniqueness, and strong mixing property of the invariant measure $\mu$.
 The strong mixing property says that the law of the solution with a random starting point converges for long times to $\mu$ in the total variation norm (see Theorem \ref{TH:ergo}). This in particular implies the ergodicity of the invariant measure (\textit{cf.} \cite{da2014stochastic,seidler1997ergodic}), which means that $\mu$ is the equilibrium measure over the phase space $H$. This principle, known as the ergodic principle, forms the basis of the statistical approach to fluid dynamics.

Concerning pathwise uniqueness,  it can be proved for $\mu-a.e.$ initial data in the paper \cite{albeverio2004uniqueness} with $G=\id^{}_H$.
On the other hand, in our more regular setting, the problem of pathwise uniqueness for the solution (see Definition \ref{DEF:gensol}) has been solved in two ways with the assumption of $\Ran(G)\subset D(A^{\frac14+\eps})$ for some $\eps>0$ (see Remark \ref{REM:knownresult} \ref{IT:uniqu}). In \cite{flandoli1994dissipativity, FlandoliMaslowski1995ergodicity, ferrario1997ergodic}  a pathwise unique solution $u$ having paths in $C\big([0,T];H\big)\cap L^2\big(0,T;D(A^{\frac14})\big)$ is constructed such that $u-z\in L^2\big(0,T;D(A^{\frac12})\big)$, where $z$ is the Ornstein-Uhlenbeck process associated to the noise $G\d W$ (see Section \ref{SEC:brownian}).
More generally, the work \cite{ferrario2003uniqueness} gives a pathwise unique solution $u\in C\big([0,T];H\big)\cap L^4\big(0,T;D(A^{\frac14})\big)$.

Concerning path regularity, the assumption $\Ran(G)\subset D(A^{\frac14+\eps})$ for some $\eps>0$ has been investigated in \cite{flandoli1994dissipativity}, improving results by \cite{bensoussantemam1973equations} and \cite{vishik1988mathematical}.  
If one requires the stronger condition $\Ran(G)\subset D(A^{\frac38+\eps})$ for some $\eps>0$, then \cite{FlandoliMaslowski1995ergodicity} proves that the solution is more regular: indeed if $x\in D(A^{\frac14})$, then one gets $u\in C\big([0,T];D(A^{\frac14})\big)$ (see also Remark \ref{REM:knownresult} \ref{IT:G}). See also \cite{ferrario1997ergodic}.

We show in Section \ref{SEC:Mainresult} that under the assumptions $D(A^{\frac12})\subset \Ran(G)\subset D(A^{\frac14+\eps})$ for some $\eps\in(0,1/4]$ and $x\in D(A^\g)$ for some $\gamma\in[0,1/4+ \eps )$, the trajectories of the solution are continuous with values in $D(A^{\g})$ (see Theorem \ref{TH:mainresult}). 
This regularity result is obtained by \textit{a priori} estimates on the mild formulation through the Stokes semigroup for appropriate finite dimensional approximations (see in particular Lemma \ref{LEM:priola}). 
To obtain such estimates we are inspired by the technique introduced by Sobolevski\u{\i} in \cite{Soboleskii1959onnonstationary} and studied by Kato, Fujita and Giga (\textit{cf.} \cite{katofujita1962nonstationary, katofujita1964navier, giga1981analyticity, giga1983weak, gigamiyakawa1985solutions, giga1986solutions}). We refer to the introduction of \cite{kielhofer1980global} for a clever presentation of the Sobolevski\u{\i} approach. 
Comparing the semigroup approach initiated by Sobolevski\u{\i} to the classical energy estimate approach, we can say that, while the energy estimate for solutions is fundamental to prove that there is a global weak solution, every method has advantages and disadvantages. 
If we discuss the existence of a unique local strong solution, the semigroup method seems to be more powerful than the energy estimates and allows to achieve more regularity. 
To the best of our knowledge, previous papers on the stochastic N-S equations do not use this semigroup method based on the mild representation to establish further regularity of the solution.

In Section \ref{SEC:misura} we use the new path regularity result to study the problem of uniqueness and ergodicity of the invariant measure, which is known to exist if $\Ran(G)\subset D(A^{\frac14+\eps})$ for any $\eps>0$ (\textit{cf.} \cite{flandoli1994dissipativity}). 
To this aim, we also prove irreducibility and strong Feller properties for the Markov semigroup. 
We blend and adapt the reasoning followed in \cite{FlandoliMaslowski1995ergodicity} and \cite{ferrario1997ergodic} (see Remark \ref{REM:irredSF} \ref{IT:ISFcompare}), which managed to prove uniqueness of invariant measure and related ergodic properties under the stronger assumption $D(A^{\frac12})\subset \Ran(G)\subset D(A^{\frac38+\eps})$, for $\eps\in(0,1/8]$. 
This upper bound $\Ran(G)\subset D(A^{\frac38+\eps})$ appears also in more recent works, see for example \cite{dong2011ergodicity, goldys2005exponential}. 
We obtain uniqueness, ergodicity and a strong mixing property for the invariant measure in the hypothesis $D(A^{\frac12})\subset \Ran(G)\subset D(A^{\frac14})$ for $\eps\in(0,1/4]$. The case $\Ran(G)\subset D(A^{\frac12})$ has been studied in \cite{ferrario1999stochastic}.

\begin{remark}\label{REM:hairer} 
We recall that \cite{albeverio2004uniqueness} treats the stochastic N-S equations with space-time Gaussian white
noise, having a Gaussian infinitesimal invariant measure $\mu_\nu$ whose covariance is given in terms of the entropy. Pathwise
uniqueness for $\mu_\nu-a.s.$ initial velocity is proven for solutions having $\mu_\nu$ as
invariant measure.

The paper \cite{hairer2006ergodicity} provides uniqueness of invariant measure and ergodicity  by considering only finite dimensional noises. We believe that this powerful  approach based on the concept of asymptotic strong Feller property could be adapted also to our situation thanks to the  new regularity results in Section \ref{SEC:Mainresult}.
However, once we have proved Theorem \ref{TH:mainresult} it seems simpler to follow the arguments in \cite{FlandoliMaslowski1995ergodicity} and \cite{ferrario1997ergodic} to obtain results on the invariant measure (see Theorem \ref{TH:ergo}). 
Note, also, that the method in \cite{hairer2006ergodicity} does not provide the mutual equivalence to the laws of the solution, nor the strong mixing property we obtained. 
\end{remark}
\newpage

\section{Preliminaries}
\subsection{Functional setting}\label{SEC:functionalsetting}
Let $\mathcal D\subset \R^2$ be a non-empty, bounded and connected open set with Lipschitz boundary, then we denote by $C_c^\infty (\mathcal D;\R^2)$ the linear space of smooth vector fields with compact support and define the following spaces:
\begin{align*}
    H&:=\overline{\{u\in C_c^\infty(\mathcal D;\R^2) \ | \ \div u=0\}}^{L^2(\mathcal D;\R^2)},\\
    V&:=\overline{\{u\in C_c^\infty (\mathcal D;\R^2) \ | \ \div u=0\}}^{H^1(\mathcal D;\R^2)},\\
    \mathcal H^2&:=\overline{\{u\in C_c^\infty (\mathcal D;\R^2) \ | \ \div u=0\}}^{H^2(\mathcal D;\R^2)}.
\end{align*}
They inherit the Hilbert and embedding properties respectively from $L^2(\mathcal D;\R^2)$ and from the classical Sobolev spaces $H^1_0(\mathcal D;\R^2)$ and $H^2_0(\mathcal D;\R^2)$. 

We denote the inner product in $H$  by
\begin{equation*}
{\langle x, y\rangle}:=\int_{\mathcal D}x\cdot y\, \d\mathscr L^2=\sum_{i=1}^2\int_{\mathcal D}x_iy_i\d \mathscr L^2\qquad\forall\,x,y\in H,
\end{equation*}
where we integrate with respect to the two-dimensional Lebesgue measure $\mathscr L^2$, we denote by $x_i,y_i$ for $i=1,2$ the scalar components of $x,y\in H\subset L^2({\mathcal D};\R^2)$ and by $\cdot$ the euclidean inner product in $\R^2$. The norm induced by $\langle\,\cdot\,,\,\cdot\,\rangle$ is denoted by $\|\cdot\|$, while we reserve the symbol $|\cdot|$ for the euclidean norm in $\R^2$. We use the symbol $\prescript{}{V'}{\langle}\,\cdot\,,\,\cdot\,{\rangle}_{V}$ for the duality pairing between $V'$ and $V$. We denote the Hilbert norm on $V$ by
\begin{equation*}
\|x\|_V^{2}:=\sum_{j=1}^2\|\partial_jx\|^2=\sum_{i,j=1}^2\int_{\mathcal D}(\partial_jx_i)^2\d\mathscr L^2\qquad\forall\,x\in V,
\end{equation*}
where $\partial_j$ is the partial derivative (in the weak sense) with respect to the $j$-th variable in ${\mathcal D}\subset \R^2$.

\subsection{Operators}\label{SEC:operators}
Since $H$ is a Hilbert subspace of $L^2({\mathcal D};\R^2)$ we can construct an orthogonal projector operator, commonly called Leray projector and denoted by $\Pi:L^2(\mathcal D;\R^2)\to H$. We define the Stokes operator as
\[
A:\mathcal H^2\to H:x\mapsto 
\Pi\begin{pmatrix}
-\Delta x_1\\
-\Delta x_2
\end{pmatrix},
\]
where $\Delta$ is the weak Laplacian operator on $H_0^2(\mathcal D)$. Here $A$ is a linear, bounded, positive-definite, self-adjoint, invertible operator. Its inverse $A^{-1}$ is a compact operator on $H$, therefore there exists an orthonormal complete system ${\{e_k\}}_{k\in\mN}$ in $H$ made of eigenvectors of $A$ and a strictly increasing and diverging sequence of eigenvalues ${\{\lambda_k\}}_{k\in\mN}\subset (0,+\infty)$ such that $Ae_k=\lambda_k e_k$ for all $k\in\mN$. If we endow the vector space $\mathcal H^2=D(A)$ with the norm induced from $H$, then $-A:D(A)\subset H\to H$ is an unbounded, closed, densely defined and self-adjoint operator (see \cite[Theorem $3$]{Fujiwaramorimoto1977Helmholtz}) which generates a one-parameter analytic semigroup of linear bounded operators ${\{e^{-tA}\}}_{t\geq 0}$ (see \cite[theorem $2$]{giga1981analyticity}). 

For any $\alpha>0$ we define in the usual way the injective and bounded linear operator $A^{-\a}$ on $H$. It is known that, in $2$-space dimensions, $\la_k$ is asymptotic to $ck$ for a certain $c>0$ as $k\to\infty$ (\textit{cf.} \cite{metivier1978valeurspropres,caetano1998eigenvalue}), thus $A^{-\a}$ is Hilbert-Schmidt if $\a>1/2$ and trace-class if $\a>1$. 
The space of linear bounded operators on $H$ will be denoted by $\mathcal L(H)$, while its subspace of Hilbert-Schmidt operators by $\mathcal L_2(H)$, both endowed with their usual complete norm. 
For any $\a>0$ we denote by $D(A^\a)\subset H$ the range of $A^{-\a}$ and by $A^\a:D(A^\a)\subset H\to H$ the unbounded inverse operator of $A^{-\a}:H\to D(A^\a)$ (\textit{cf.} \cite[Chapters $1,2$]{pazy1992semigroups}). 
It is known (\textit{cf.} \cite[Theorem $1.1$]{fujitamorimoto1970fractional}) that
\[
D(A^{\a})=\overline{\{u\in C_c^\infty (\mathcal D;\R^2) \ | \ \div u=0\}}^{H^{2\a}(\mathcal D;\R^2)}\qquad\forall\,\a>0,
\]
where $H^{2\a}(\mathcal D;\R^2)$ is the classical fractional Sobolev space (see, for instance, \cite{dinezza2012hitchhikers}).
Moreover the complete norm on $D(A^\a)$ given by 
\[
\|x\|^{}_{D(A^\a)}:=\|A^\a x\|\qquad \forall\, x\in D(A^\a),
\]
is equivalent to the usual Sobolev norm on ${H_0^{2\a}(\mathcal D;\R^2)}$. We identify $H$ with its topological dual $H'$, thus we have the following compact and dense embeddings for all $\a\in(0,1/2)$ and $\b>1/2$:
\begin{equation*}
D(A^{\b})\xhookrightarrow{}V=D(A^{\frac12})\xhookrightarrow{} D(A^\alpha)\xhookrightarrow{}H=H'\xhookrightarrow{}D(A^\a)'\xhookrightarrow{}V'=D(A^{\frac12})'\xhookrightarrow{}D(A^\b)'.
\end{equation*}

Since $A^{-\a}$ is bounded and positive, it is also self-adjoint, thus the inverse $A^\a:D(A^\alpha)\subset H\to H$ is a self-adjoint unbounded operator (\textit{cf.} \cite[Proposition $8.2$]{taylor1996partial}). Moreover $A^\a e^{-tA}$ is bounded for all $t>0$ and the following property holds (see \cite[Lemma $2.10$]{katofujita1964navier})
\begin{gather}\label{EQ:Aaet1}
    \|A^\alpha e^{-tA}\|_{\mathcal L(H)}
    \leq \Big(\frac{\a}{e}\Big)^{\a}t^{-\a}\qquad\forall \, t> 0, \  \forall\,  \a> 0.    
\end{gather}

From the properties of $A$ and the H\"older inequality, we derive the following lemma. 
\begin{lemma}[Interpolation inequality]\label{LEM:sobolevinterpolation} For all $0\leq p< q<+\infty$ and for all $u\in D(A^q)$ it holds
    \begin{equation*}
     \|A^ru\|\leq \|A^pu\|^\la \|A^qu\|^{1-\la} \qquad  \forall\, \la\in(0,1), \ r:=\la p + (1-\la)q.
     \end{equation*}
\end{lemma}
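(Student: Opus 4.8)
The statement is the standard interpolation (moment) inequality for fractional powers of a positive self-adjoint operator, so the plan is to reduce everything to the spectral decomposition of $A$ and then apply Hölder's inequality with conjugate exponents $1/\la$ and $1/(1-\la)$. First I would write any $u\in D(A^q)$ in the orthonormal basis ${\{e_k\}}_{k\in\mN}$ of eigenvectors, $u=\sum_k \langle u,e_k\rangle e_k$, so that $A^s u=\sum_k \la_k^s\langle u,e_k\rangle e_k$ for every $s\ge 0$ (this is well-defined since $u\in D(A^q)$ and $q\ge r\ge p\ge 0$ ensures membership in $D(A^r)$ and $D(A^p)$). Then, by Parseval,
\[
\|A^r u\|^2=\sum_{k\in\mN}\la_k^{2r}\,|\langle u,e_k\rangle|^2,
\]
and using $r=\la p+(1-\la)q$ I would split the exponent as $\la_k^{2r}=\big(\la_k^{2p}\big)^{\la}\big(\la_k^{2q}\big)^{1-\la}$, hence
\[
\|A^r u\|^2=\sum_{k\in\mN}\Big(\la_k^{2p}\,|\langle u,e_k\rangle|^2\Big)^{\la}\Big(\la_k^{2q}\,|\langle u,e_k\rangle|^2\Big)^{1-\la},
\]
where I have also written $|\langle u,e_k\rangle|^{2}=|\langle u,e_k\rangle|^{2\la}\cdot|\langle u,e_k\rangle|^{2(1-\la)}$ and absorbed the two factors into the respective brackets.

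Next I would apply Hölder's inequality for series with exponents $1/\la$ and $1/(1-\la)$ (which are conjugate since $\la+(1-\la)=1$ and both lie in $(1,\infty)$ for $\la\in(0,1)$) to the two nonnegative sequences $a_k:=\la_k^{2p}|\langle u,e_k\rangle|^2$ and $b_k:=\la_k^{2q}|\langle u,e_k\rangle|^2$:
\[
\sum_{k\in\mN} a_k^{\la} b_k^{1-\la}\le\Big(\sum_{k\in\mN} a_k\Big)^{\la}\Big(\sum_{k\in\mN} b_k\Big)^{1-\la}=\|A^p u\|^{2\la}\,\|A^q u\|^{2(1-\la)},
\]
again by Parseval. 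Combining the two displays gives $\|A^r u\|^2\le\|A^p u\|^{2\la}\|A^q u\|^{2(1-\la)}$, and taking square roots yields the claimed inequality.

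There is essentially no real obstacle here; the only points requiring a word of care are (i) checking that all the series converge — which follows because $u\in D(A^q)\subset D(A^r)\cap D(A^p)$, the inclusions being immediate from $0\le p\le r\le q$ together with the fact that the eigenvalues $\la_k$ are bounded below by $\la_1>0$ — and (ii) noting that the borderline cases $\la\to 0$ or $\la\to 1$ are excluded in the statement, so the Hölder exponents are genuinely finite. If one prefers to avoid the spectral sum, the same conclusion follows from the three-lines theorem applied to the analytic family $z\mapsto A^{(1-z)p+zq}u$, but the spectral argument above is the most elementary and self-contained, and is the one I would write out.
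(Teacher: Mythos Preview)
Your proof is correct and matches the paper's approach: the paper does not write out a detailed argument but simply states that the lemma follows ``from the properties of $A$ and the H\"older inequality,'' which is precisely the spectral-decomposition-plus-H\"older argument you give. There is nothing to add.
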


Eventually, we define the function  
\[
B:C_c^\infty ({\mathcal D};\R^2)\to H:x\mapsto \Pi\big[(Dx)x\big]=\Pi\begin{pmatrix}
x\cdot \nabla x_1\\
x\cdot\nabla x_2
\end{pmatrix},
\]
which is commonly known as the Navier-Stokes non-linearity. Similarly, we introduce the trilinear bounded operator
\begin{equation*}
 b :V\times V\times V\to \R : (x,y,z)\mapsto \int_{{\mathcal D}}\begin{pmatrix}
x\cdot \nabla y_1\\
x\cdot\nabla y_2
\end{pmatrix}\cdot z\, \d \mathscr{L}^2=\sum_{i,j=1}^2\int_{{\mathcal D}}z_j\,x_i\,\partial_iy_j\, \d \mathscr L^2,
\end{equation*}
which is antisymmetric upon exchange of the second and third entries, thanks to integration by parts and the fact that vector fields in $V$ have vanishing divergence. The following result is taken from \cite[Lemma $3.2$]{giga1983weak} and characterizes the extensions of $b$ and $B$.

\begin{lemma}\label{LEM:bB}
    For all $\delta\in[0,1)$ and $\theta,\rho>0$ such that $\rho+\theta+\delta\geq 1$ and $\rho+\delta>1/2$, $ b $ can be uniquely extended to a bounded trilinear operator and $B$ to a continuous function as follows
    \begin{gather*}
     b :D(A^{\theta})\times D(A^{\rho})\times D(A^{\delta})\to\R,\qquad
     B:D(A^{\rho\vee \theta})\to D(A^\delta)'.
    \end{gather*}
    Moreover, there exists a constant $c_0=c_0(\theta,\rho,\delta)>0$ such that 
    \begin{gather*}
    |b(x,y,z)|\leq c_0\|A^\theta x\| \|A^\rho y\| \|A^\delta z\|,\qquad
    \|A^{-\delta}B(x)\|\leq c_0\|A^\rho x \| \|A^{\theta}x\|.
    \end{gather*}
    Eventually, as soon as both members make sense, it holds $ b (x,y,z)=- b (x,z,y)$.
\end{lemma}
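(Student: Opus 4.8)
The statement to prove is Lemma~\ref{LEM:bB}: for $\delta\in[0,1)$ and $\theta,\rho>0$ with $\rho+\theta+\delta\ge 1$ and $\rho+\delta>1/2$, the trilinear form $b$ extends boundedly to $D(A^\theta)\times D(A^\rho)\times D(A^\delta)\to\R$ with $|b(x,y,z)|\le c_0\|A^\theta x\|\,\|A^\rho y\|\,\|A^\delta z\|$, and correspondingly $B$ extends to $D(A^{\rho\vee\theta})\to D(A^\delta)'$; let me sketch how I'd do it.

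The plan is to estimate the pointwise integrand $b(x,y,z)=\sum_{i,j}\int_{\mathcal D} z_j\,x_i\,\partial_i y_j\,\d\mathscr L^2$ by Hölder's inequality with three exponents $p,q,s\in(1,\infty)$, $1/p+1/q+1/s=1$, giving $|b(x,y,z)|\le \|x\|_{L^p}\,\|\nabla y\|_{L^q}\,\|z\|_{L^s}$ (treating the vector/matrix norms componentwise, which only costs a dimensional constant). Then I would convert each $L^r$ norm into a norm of a fractional power of $A$ using the identification $D(A^\alpha)\simeq H^{2\alpha}_0(\mathcal D;\R^2)$ with equivalent norms (stated in the excerpt) together with the Sobolev embedding $H^{2\alpha}(\mathcal D;\R^2)\hookrightarrow L^r(\mathcal D;\R^2)$, valid in two space dimensions whenever $\frac1r\ge\frac12-\alpha$ (and, for the middle factor, $\|\nabla y\|_{L^q}\lesssim\|y\|_{H^{2\rho}}$ provided $2\rho\ge 1$ and $\frac1q\ge\frac12-(2\rho-1)/2=1-\rho$, i.e. $\frac1q\ge 1-\rho$). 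Concretely I need to choose $p,q,s$ so that
\[
\frac1p\ \ge\ \tfrac12-\theta,\qquad \frac1q\ \ge\ 1-\rho,\qquad \frac1s\ \ge\ \tfrac12-\delta,\qquad \frac1p+\frac1q+\frac1s=1 .
\]
Summing the three lower bounds gives $\tfrac12-\theta+1-\rho+\tfrac12-\delta=2-(\rho+\theta+\delta)\le 1$ exactly under the hypothesis $\rho+\theta+\delta\ge1$, so there is enough "room" to pick admissible exponents; the strict inequality $\rho+\delta>1/2$ and $\delta<1$ are what let me keep all exponents strictly between $1$ and $\infty$ and avoid the borderline embedding $H^1\not\hookrightarrow L^\infty$ in 2D (one uses instead $H^{1}\hookrightarrow L^r$ for every $r<\infty$, or a small perturbation of the exponents). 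I'd carry out the case analysis: when all three "needs" are slack one just splits evenly; in the extremal regime $\rho+\theta+\delta=1$ I saturate all three Sobolev inequalities simultaneously, which is where the constant $c_0(\theta,\rho,\delta)$ is born.

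Having bounded the trilinear form on the dense set $C_c^\infty$-fields (resp. on $V$), the extension to $D(A^\theta)\times D(A^\rho)\times D(A^\delta)$ follows by density (the embeddings $D(A^{1/2})\hookrightarrow D(A^\alpha)$ are dense) and boundedness, and uniqueness of the extension is automatic. The antisymmetry $b(x,y,z)=-b(x,z,y)$ passes to the extension by continuity whenever both sides make sense (i.e. when the triple of exponents is admissible for both orderings, which needs $\theta+\rho+\delta\ge1$ symmetric in $\rho,\delta$ — fine since the hypotheses $\rho+\theta+\delta\ge1$, $\rho+\delta>1/2$ are symmetric in $\rho\leftrightarrow\delta$ up to relabeling). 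Finally, to get the operator $B:D(A^{\rho\vee\theta})\to D(A^\delta)'$ with $\|A^{-\delta}B(x)\|\le c_0\|A^\rho x\|\,\|A^\theta x\|$, note that $B(x)$ is the element of $D(A^\delta)'$ represented by $z\mapsto -b(x,x,z)$ (using the antisymmetry and $b(x,y,z)=-b(x,z,y)$, or directly $\langle B(x),z\rangle=b(x,x,z)$), so $\|A^{-\delta}B(x)\|=\|B(x)\|_{D(A^\delta)'}=\sup_{\|A^\delta z\|\le1}|b(x,x,z)|\le c_0\|A^\theta x\|\,\|A^\rho x\|$; taking $\|A^\theta x\|\le C\|A^{\rho\vee\theta}x\|$ and likewise for $\|A^\rho x\|$ (monotonicity of fractional powers, i.e. Lemma~\ref{LEM:sobolevinterpolation} with $p=0$) finishes it. Continuity of $B$ then comes from the bilinearity of $(x,x')\mapsto b(x,x',\cdot)$ and a standard polarization/difference estimate.

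The main obstacle I anticipate is the bookkeeping in the borderline two-dimensional Sobolev embeddings: in $\R^2$ the critical exponent $H^{2\alpha}\hookrightarrow L^{2/(1-2\alpha)}$ degenerates as $2\alpha\uparrow1$, and the middle factor involves a first derivative of $y$, so the case $\rho=1/2$ (where $\nabla y\in L^q$ needs $H^1\hookrightarrow L^q$ and $q<\infty$ is forced) has to be handled by taking $q$ large but finite and compensating in $p,s$ — which is exactly where the hypotheses $\delta<1$ and $\rho+\delta>1/2$ (ensuring strict slack) are used, rather than $\rho+\delta\ge1/2$. Making sure a \emph{single} admissible triple $(p,q,s)$ exists in \emph{every} corner of the parameter region $\{\rho+\theta+\delta\ge1,\ \rho+\delta>1/2,\ \delta\in[0,1)\}$, uniformly enough to extract the constant $c_0$, is the only delicate point; everything else is density and duality. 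Since this is precisely \cite[Lemma~3.2]{giga1983weak}, I would either reproduce Giga's argument in this form or simply cite it, as the excerpt does.
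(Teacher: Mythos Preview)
The paper does not prove this lemma at all; it simply records it as ``taken from \cite[Lemma~3.2]{giga1983weak}'' and moves on. Your closing remark that you would ``either reproduce Giga's argument in this form or simply cite it, as the excerpt does'' is therefore exactly what the paper does, and your Hölder-plus-Sobolev exponent arithmetic correctly explains the origin of the condition $\theta+\rho+\delta\ge1$.

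There is, however, a genuine gap in the sketch itself. You control the middle factor by $\|\nabla y\|_{L^q}\lesssim\|y\|_{H^{2\rho}}$ \emph{provided $2\rho\ge1$}; but for $\rho<1/2$ one has only $\nabla y\in H^{2\rho-1}$, which is a negative-order space and embeds into no $L^q$, so the three-factor Hölder step fails outright. This is not an exotic corner of the parameter region: \emph{every} use of Lemma~\ref{LEM:bB} in the paper takes $\rho<1/2$ (e.g.\ $\rho=\theta=1/4$, $\delta=1/2$ in Theorem~\ref{TH:ulunardi}; $\rho=\theta=1/4+\alpha/2<3/8$, $\delta=1/2-\alpha$ in Lemma~\ref{LEM:atimaprioriferrario}; $\rho=1/4$, $\delta=5/8$ in Lemma~\ref{LEM:priola}). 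Giga's proof avoids placing the full gradient on $y$ by exploiting $\operatorname{div}x=0$ to write $x\cdot\nabla y=\operatorname{div}(x\otimes y)$ and then using the $L^r$-boundedness of the Riesz-type operator $A^{-1/2}\Pi\operatorname{div}$ together with the $L^r$--$L^2$ mapping properties of fractional Stokes powers; this reshuffling of the derivative between $y$ and $z$ is precisely what produces the asymmetric hypothesis $\rho+\delta>1/2$ (rather than, say, $\theta+\rho>1/2$), and it is the missing idea in your outline. Your anticipated obstacle---the borderline $H^1\not\hookrightarrow L^\infty$ at $\rho=1/2$---is real but secondary; the primary issue is that the argument as written does not reach below $\rho=1/2$ at all.
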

    \noindent We observe in particular that $B$ allows to rewrite
         $b (x,x,z)=
        \prescript{}{ V'}{\langle} B(x),z{\rangle}_{ V}$ for all $x\in D(A^{\frac14})$ and $z\in V$.

\subsection{Brownian noise}\label{SEC:brownian}
We consider a filtered and complete probability space $\big(\Omega,\mathcal F, {\{\mathcal F_t\}}_{t\geq 0},\mP\big)$, a sequence of mutually independent ${\{\mathcal F_t\}}_{t\geq 0}$-adapted real Brownian motions ${\{w^k\}}_{k\in\mN}$ and we introduce a coloured Wiener noise as 
\begin{equation}\label{EQ:defQW}
GW_t:=\sum_{k\in\mN}w^k_tGe_k\qquad \forall\, t\geq 0,
\end{equation}
where $\{e_k\}_{k\in\mN}$ are the eigenvectors of $A$ and $G\in\mathcal L(H)$ is an injective and bounded linear operator.
We observe that if $G$ is Hilbert-Schmidt, $GW$ is an $H$-valued Wiener process, thus the series in equation \eqref{EQ:defQW} converges for all $t \geq 0$ in $L^2(\Omega;H)$ or $\mP-a.s.$ in $C\big([0,T];H\big)$ for all $T > 0$ (see \cite[Theorem $4.5$]{da2014stochastic}).
However, our primary interest lies in more general noises: 
if $G$ is not Hilbert-Schmidt, $GW$ is defined as a generalized Wiener process (\textit{cf.} \cite[Section $4.1.2$]{da2014stochastic}). 
The main assumptions on $G$ that will be used throughout the paper are the following
\begin{align}\label{HP:H0}
&G\in\mathcal L(H),\qquad G\text{ injective,} \qquad \exists \, \eps>0 \quad \text{s.t.} \quad \Ran(G)\subset D(A^{\frac 14+\eps})\tag{$H_0$},\\
&G\in\mathcal L(H),\qquad G\text{ injective,} \qquad \exists \, \eps\in(0,1/4] \quad \text{s.t.} \quad V\subset \Ran(G)\subset D(A^{\frac 14+\eps}).\tag{$H_1$}\label{HP:H1}
\end{align}
Clearly \eqref{HP:H1}$\Longrightarrow$\eqref{HP:H0}. Hypothesis \eqref{HP:H0} is the basic assumption found in literature (\textit{cf.} \cite{FlandoliMaslowski1995ergodicity,ferrario1997ergodic}) that guarantees uniqueness of generalized solution and existence of invariant measure for our equation (see Sections \ref{SEC:equation} and \ref{SEC:P_tmu}). Conversely, \eqref{HP:H1} is the required hypothesis for our setting.

We remark that $
\langle GW_t,\phi\rangle=
\sum_{k\in\mN}w^k_t\langle Ge_k,\phi\rangle$ for all $t\geq 0$ and $\phi\in D(A)$,
where the series converges for all $t\geq 0$ in $L^2(\Omega)$ or almost surely in $C([0,T])$ for all $T>0$.

The following theorem defines and characterizes a stochastic process ${\{Z_t\}}_{t\geq 0}$, which will be often referred to as stochastic convolution or Ornstein-Uhlenbeck process starting at $0\in H$.

\begin{theorem}\label{TH:z}
Let $\eps,G$ satisfy assumption \eqref{HP:H0}. There is a pathwise unique predictable process ${\{Z_t\}}_{t\geq 0}$ with $\mP-a.s.$ trajectories $z\in L^1(0,T;H)$ for all $T>0$ such that 
\begin{equation*}
\langle z(t), \phi\rangle +\int_0^t\langle z(s), A\phi\rangle\d s =\langle GW_t,\phi\rangle \qquad \forall \, \phi\in  D(A), \ \forall \, t\geq 0, \ \mP-a.s.
\end{equation*}
Moreover $z\in C^\b\big([0,T];D(A^\g)\big)$ for any $\b>0$ and $\g\geq 0$ such that $\b+\g<(1/4+\eps)\wedge 1/2$, for all $T>0$ and $\mP-a.s.$
\end{theorem}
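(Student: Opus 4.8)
The natural candidate is the stochastic convolution
\[
Z_t:=\int_0^t e^{-(t-s)A}G\,\d W_s=\sum_{k\in\mN}\int_0^t e^{-(t-s)A}Ge_k\,\d w^k_s,
\]
and the plan is to prove, in order: (a) that $Z$ is a well-defined predictable $H$-valued process with the stated space-time regularity; (b) that $Z$ solves the weak identity in the statement; (c) pathwise uniqueness among solutions with $L^1(0,T;H)$-trajectories. I would first record the two elementary operator estimates on which everything rests. Since $\Ran(G)\subset D(A^{\frac14+\eps})$, the closed graph theorem gives $A^{\frac14+\eps}G\in\mathcal L(H)$; combining this with \eqref{EQ:Aaet1} and the fact that $A^{-\mu}\in\mathcal L_2(H)$ for $\mu>1/2$, one obtains, for every $\g\ge0$, $\th\in[0,1]$, small $\delta>0$ and $r\in(0,T]$,
\[
\|A^\g e^{-rA}G\|_{\mathcal L_2(H)}\le C\,r^{-(\g+\frac14-\eps+\delta)\vee 0},\qquad \|A^\g(e^{-(r+\sigma)A}-e^{-rA})G\|_{\mathcal L_2(H)}\le C\,\sigma^{\th}\,r^{-(\g+\th+\frac14-\eps+\delta)\vee 0},
\]
the second one also using the standard bound $\|(e^{-\sigma A}-\id)A^{-\th}\|_{\mathcal L(H)}\le C_\th\,\sigma^\th$ for $\th\in[0,1]$ (itself immediate from \eqref{EQ:Aaet1}).

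For step (a) I would estimate second moments and then use Gaussianity. By the Itô isometry for the (generalized) Wiener process, $\mE\|A^\g Z_t\|^2=\int_0^t\|A^\g e^{-rA}G\|^2_{\mathcal L_2(H)}\,\d r$, which is finite as soon as $2(\g+\frac14-\eps+\delta)<1$, i.e. for every $\g<1/4+\eps$; in particular $Z_t\in D(A^\g)$ $\mP$-a.s. For the increment with $0\le s<t\le T$, split
\[
A^\g(Z_t-Z_s)=\int_0^s A^\g\big(e^{-(t-r)A}-e^{-(s-r)A}\big)G\,\d W_r+\int_s^t A^\g e^{-(t-r)A}G\,\d W_r
\]
and apply the isometry to both terms. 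The first is controlled, via the second operator estimate, by $C_T(t-s)^{2\th}$ for any $\th<(1/4+\eps-\g)\wedge1$; the second, the "fresh" part, by $\int_0^{t-s}\|A^\g e^{-\sigma A}G\|^2_{\mathcal L_2(H)}\,\d\sigma$, which equals $C(t-s)$ when the spatial weight produces no singular kernel and is $\le C(t-s)^{2(1/4+\eps-\g-\delta)}$ otherwise. In all cases $\mE\|A^\g(Z_t-Z_s)\|^2\le C_T|t-s|^{2\kappa}$ for every $\kappa<(1/4+\eps-\g)\wedge1/2$. Since $Z$ is Gaussian with values in the Hilbert space $D(A^\g)$, all its moments are equivalent to the second moment, so $\mE\|A^\g(Z_t-Z_s)\|^{2m}\le C_m|t-s|^{2m\kappa}$; taking $m$ with $2m\kappa>1$, the Kolmogorov continuity theorem yields a modification with trajectories in $C^\b([0,T];D(A^\g))$ for any $\b<\kappa$, and a countable intersection over rational admissible $(\b,\g)$ gives one modification covering all pairs with $\b+\g<(1/4+\eps)\wedge1/2$. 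A predictable version exists by the usual properties of stochastic convolutions (equivalently, one may build $Z$ directly by the factorization method, which also produces measurability in $(t,\omega)$). This $Z$ has $\mP$-a.s. trajectories $z\in C([0,T];H)\subset L^1(0,T;H)$.

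For step (b), fix $\phi\in D(A)$. Using the stochastic Fubini theorem (legitimate because $\int_0^t\!\int_0^s\|e^{-(s-r)A}G\|_{\mathcal L_2(H)}\,\d r\,\d s<\infty$) together with $\int_r^t A e^{-(s-r)A}\phi\,\d s=\phi-e^{-(t-r)A}\phi$, and self-adjointness of $e^{-(t-r)A}$, one computes
\[
\int_0^t\langle z(s),A\phi\rangle\,\d s=\int_0^t\Big\langle \phi-e^{-(t-r)A}\phi,\ G\,\d W_r\Big\rangle=\langle GW_t,\phi\rangle-\langle z(t),\phi\rangle,
\]
which rearranges exactly into the claimed weak formulation; combined with the pathwise regularity from step (a), this proves the first two assertions. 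For step (c), if $z^{(1)},z^{(2)}$ both satisfy the identity with $L^1(0,T;H)$-trajectories, then $v:=z^{(1)}-z^{(2)}$ obeys $\langle v(t),\phi\rangle+\int_0^t\langle v(s),A\phi\rangle\,\d s=0$ for all $\phi\in D(A)$ and $t\ge0$, $\mP$-a.s.; testing with $\phi=e_k$, the scalar function $g_k(t):=\langle v(t),e_k\rangle\in L^1(0,T)$ satisfies $g_k(t)=-\la_k\int_0^t g_k(s)\,\d s$, hence is absolutely continuous with $g_k'=-\la_k g_k$ and $g_k(0)=0$, so $g_k\equiv0$; completeness of $\{e_k\}$ in $H$ then forces $v=0$ in $L^1(0,T;H)$, $\mP$-a.s.

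The step I expect to be most delicate is the sharp increment estimate in (a): the two pieces of $Z_t-Z_s$ have genuinely different limitations, and getting the threshold $(1/4+\eps)\wedge1/2$ requires keeping both regimes straight. For the "memory" part one trades the time-increment exponent $\th$ against the integrability near $r=s$ of $(s-r)^{-2(\g+\th+\frac14-\eps)}$ — this is what caps the gain at $1/4+\eps$ (and at $\th\le1$, harmless here) — while for the "fresh" part, once the weight $A^\g$ no longer yields a singular kernel (i.e. for $\g$ small relative to $\eps$), the only surviving constraint is the intrinsic $\sqrt{t-s}$-scaling of the Itô integral on $[s,t]$, the origin of the $1/2$. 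Carrying the arbitrarily small loss $\delta$ through both estimates and choosing the auxiliary exponents uniformly in $(\b,\g)$ is the technical heart; everything else reduces to routine applications of Itô's isometry, the stochastic Fubini theorem, the Kolmogorov continuity theorem, and Gaussian moment equivalence.
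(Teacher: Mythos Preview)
Your proof is correct and self-contained, but it takes a different route from the paper. The paper disposes of the theorem in a few lines by invoking \cite[Theorems~5.4 and~5.15]{da2014stochastic}: existence and pathwise uniqueness are quoted directly, and the $C^\b\big([0,T];D(A^\g)\big)$ regularity follows once one verifies the single integral condition $\int_0^1 t^{-2\a}\|e^{-tA}G\|_{\mathcal L_2(H)}^2\,\d t<\infty$ for every $\a<1/4+\eps$, which the paper checks by writing $e^{-tA}G=e^{-tA}A^{-(1/4+\eps)}\cdot A^{1/4+\eps}G$ and using the eigenvalue asymptotics $\la_k\sim ck$. You instead rebuild the regularity estimate from scratch via second-moment increment bounds, Gaussian moment equivalence, and Kolmogorov's continuity theorem, and you verify the weak identity and uniqueness by hand (stochastic Fubini and testing against eigenvectors, respectively). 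Both arguments rest on the same Hilbert--Schmidt estimate for $A^\g e^{-rA}G$; the paper's version is much shorter because it black-boxes the factorization method hidden inside Theorem~5.15, whereas yours is longer but self-contained and makes transparent why the two thresholds arise---$1/4+\eps$ from the integrability of the singular kernel, and $1/2$ from the Brownian scaling of the ``fresh'' integral on $[s,t]$.
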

\begin{proof}
    For the existence and uniqueness result we refer to \cite[Theorem $5.4$]{da2014stochastic}. The second statement for $\eps\in(0,1/4]$ descends from \cite[Theorem $5.15$]{da2014stochastic}, once checked that for any $\a<1/4+\eps$, it holds
    \begin{equation*}
    \int_0^1t^{-2\a}\|e^{-tA}G\|^2_{\mathcal L_2(H)}\d t
    =\int_0^1t^{-2\a}\operatorname{Tr}\big[e^{-tA}GG^*e^{-tA}\big]\d t<+\infty.
    \end{equation*}
    Let $p=1/4+\eps$; we have
    \begin{align*}
    \int_0^1t^{-2\a}\|e^{-tA}G\|^2_{\mathcal L_2(H)}\d t &
    =\int_0^1t^{-2\a}\|e^{-tA}A^{-p}A^{p}G\|^2_{\mathcal L_2(H)}\d t
    \leq \|A^pG\|^2_{\mathcal L(H)}\int_0^1t^{-2\a} \|e^{-tA}A^{-p}\|^2_{\mathcal L_2(H)}  \d t,
    \end{align*}
    where we used the properties of the Hilbert-Schmidt norm and the fact that $A^pG\in\mathcal L(H)$, thanks to hypothesis \eqref{HP:H0}. In order to compute the Hilbert-Schmidt norm of $e^{-tA}A^{-p}$, we resort to the orthonormal complete system $\{e_k\}_{k\in\mN}^{}$ for $H$ made of eigenvectors of $A$
    \[
    \int_0^1t^{-2\a}\|e^{-tA}A^{-p}\|^2_{\mathcal L_2(H)}\d t =\sum_{k=1}^\infty\la_k^{-2p}\int_0^1t^{-2\a} e^{-2t\la_k}\d t 
    \leq \sum_{k=1}^\infty\la_k^{-2p-1+2\a}\int_0^{+\infty}e^{-2s}s^{-2\a}\d s.
    \]
    The time integral converges for all $\a<1/2$. It is known (\textit{cf.}  \cite{metivier1978valeurspropres}) that the Stokes eigenvalues $\la_k$ in our two-dimensional setting are asymptotic to $ck$ for some constant $c>0$, as $k\to\infty$. Consequently the series converges for all $\a<p=1/4+\eps$. An application of \cite[Theorem $5.15$]{da2014stochastic} gives $Z$ path regularity $C^\b\big([0,T];D(A^\g)\big)$ for any $\b+\g<\a$ for any $\a<1/4+\eps$, thus also for any $\b+\g<1/4+\eps$. Finally it is shown in \cite[Section $5.4.2$]{da2014stochastic} shows that we can't choose $\g=1/2$, thus preventing the continuity of the trajectories in $V=D(A^{\frac12})$, even if $\eps>1/4$.
\end{proof} 

\begin{remark}
    Theorem \ref{TH:z} underscores the significance of the hypothesis \eqref{HP:H0} on the range of $G$. It sets our minimal regularity for the noise, thus effecting the solution of the N-S equations, as will be discussed in the next section. For $\Ran(G)\subset D(A^{\frac14+\eps})$ with $\eps$ arbitrarily close to $0$, we reach continuity in time with values in $D(A^{\frac14})$. On the other hand, for $\Ran(G)\subset D(A^{\frac12})$, $Z$ achieves the maximal regularity as a $\mP-a.s.$ continuous process with values in $D(A^\g)$ for $\g>0$ arbitrarily close to $1/2$.
\end{remark}

\begin{example}
An example of operator $G$ that satisfies our assumptions \eqref{HP:H1} is given by
\[
G=A^{-\g}L,
\]
with $\g\in(1/4,1/2]$ and $L$ an isomorphism in $H$.
    Another example is
    \[
    Gx=\sum_{k\in\mN}\frac{\langle x,e_k\rangle}{\sigma_k}e_k\qquad\forall\, x\in H,
    \]
    where $ak^{1/4+\eps}\leq \sigma_k\leq  bk^{1/2}$ for some constants $a,b>0$ and all $k\in\mN$. Recall indeed that the eigenvalues $\lambda_k$ of the Stokes
operator $A$ behave like $ck$, for some $c>0$, as $k\to\infty$ (\textit{cf.} \cite{metivier1978valeurspropres}).
\end{example}

\subsection{Abstract equation}\label{SEC:equation}
We study the abstract 2D N-S stochastic equations in $H$, for some starting point $x\in H$ and some bounded linear operator $G$ satisfying assumption \eqref{HP:H0}
\begin{equation}\label{EQ:NS}
\begin{cases}
\d X^x_t+[AX^x_t+B(X^x_t)]\d t=G\d W_t\qquad & t>0,\, \mP-a.s. \\
X^x_0=x  &\mP-a.s.
\end{cases}.
\end{equation}
We state the definition of solution we employ in the paper, which is taken from \cite{flandoli1994dissipativity}.
\begin{definition}\label{DEF:gensol}
    Given $x\in H$ and $G$ as in hypothesis \eqref{HP:H0}, a generalized solution to equation \eqref{EQ:NS} is a progressively measurable process $X^x$ in $H$ with path regularity 
    \[
    X^x(\omega)=u\in C\big([0,T];H\big)\cap L^2\big(0,T; D(A^{\frac14})\big)\qquad \forall\, T>0, \ \mP-a.s.\ \omega\in\Omega
    \]
    such that 
    \begin{gather*}
        \langle u(t),\phi\rangle+\int_0^t\langle u(s),A\phi\rangle \d s=\int_0^t b\big(u(s),\phi, u(s)\big)  \d s  +\langle x,\phi\rangle+\langle GW_t, \phi\rangle\\
        \forall \, t\geq 0, \ \forall\, \phi\in D(A), \ \mP-a.s.
    \end{gather*} 
\end{definition}

The following theorem summarizes the results in the literature regarding the existence, uniqueness, and path regularity of the generalized solution.
\begin{theorem}\label{TH:knownresult}
For every $x\in H$ and $\eps,G$ as in hypothesis \eqref{HP:H0}, there exists a pathwise unique generalized solution to equation \eqref{EQ:NS} (\textit{cf.} Definition \ref{DEF:gensol}) with trajectories $u$ that satisfy $\mP-a.s.$ $u-z\in L^2(0,T;V)$ for all $T>0$, where $z$ is given by Theorem \ref{TH:z}. Moreover this solution is a Markov process in $H$ and satisfies $\mP-a.s.$ the following additional path regularities: $u\in L^2\big(0,T;D(A^{(\frac14+ \eps )\wedge\frac12}\big)\cap L^4\big(0,T; D(A^{\frac14})\big)$ for all $T>0$.
\end{theorem}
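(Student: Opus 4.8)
The plan is to cite the bulk of this result from the literature and then supply the short interpolation-type arguments for the extra regularities that are stated but not standard. First, the existence of a pathwise unique generalized solution, the fact that $u-z\in L^2(0,T;V)$, and the Markov property are precisely the content of \cite{flandoli1994dissipativity, FlandoliMaslowski1995ergodicity, ferrario1997ergodic}, so I would invoke those references directly; the key mechanism is that $v:=u-z$ solves, pathwise, a deterministic N-S equation with random coefficients, $\partial_t v + Av + B(v+z) = 0$, $v(0)=x$, whose energy estimate (testing against $v$, using the antisymmetry $b(w,w,w)=0$ from Lemma \ref{LEM:bB} and $b(v+z,v+z,v)= -b(v+z,v,v+z)$) gives $v\in C([0,T];H)\cap L^2(0,T;V)$ once one controls the terms involving $z$ via Lemma \ref{LEM:bB} and the regularity of $z$ from Theorem \ref{TH:z}.

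Next, for the additional regularity $u\in L^2\bigl(0,T;D(A^{(\frac14+\eps)\wedge\frac12})\bigr)$: since $z\in C\bigl([0,T];D(A^\g)\bigr)$ for every $\g<(1/4+\eps)\wedge 1/2$ by Theorem \ref{TH:z}, and in fact one can push $z$ into $L^2\bigl(0,T;D(A^{(\frac14+\eps)\wedge\frac12})\bigr)$ using the factorization/maximal-inequality bound established in the proof of Theorem \ref{TH:z} (the integral $\int_0^1 t^{-2\a}\|e^{-tA}G\|_{\mathcal L_2(H)}^2\,\d t<\infty$ for $\a<1/4+\eps$ gives, via the standard stochastic-convolution estimate, $z\in L^2(0,T;D(A^\a))$ up to $\a=(1/4+\eps)\wedge 1/2$), it suffices to prove $v=u-z\in L^2\bigl(0,T;D(A^{(\frac14+\eps)\wedge\frac12})\bigr)$. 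But $v\in L^2(0,T;V)=L^2(0,T;D(A^{1/2}))$ already, and since $(1/4+\eps)\wedge 1/2\le 1/2$, this is immediate from $D(A^{1/2})\hookrightarrow D(A^{(1/4+\eps)\wedge 1/2})$ and boundedness of $v$ in $H$. For the bound $u\in L^4\bigl(0,T;D(A^{1/4})\bigr)$, I would interpolate: by Lemma \ref{LEM:sobolevinterpolation} with $p=0$, $q=1/2$, $\la=1/2$ one has $\|A^{1/4}u(s)\|\le \|u(s)\|^{1/2}\,\|A^{1/2}u(s)\|^{1/2}$, hence
\[
\int_0^T\|A^{1/4}u(s)\|^4\,\d s\le \Bigl(\sup_{s\in[0,T]}\|u(s)\|^2\Bigr)\int_0^T\|A^{1/2}u(s)\|^2\,\d s,
\]
which is finite because $u=v+z$ with $v\in C([0,T];H)\cap L^2(0,T;V)$ and $z\in C([0,T];H)\cap L^2(0,T;V)$; this also re-derives $u\in L^2(0,T;D(A^{1/4}))$ from Definition \ref{DEF:gensol} as a byproduct.

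The only genuinely delicate point is the promotion of $z$ from $C\bigl([0,T];D(A^\g)\bigr)$ with $\g$ strictly below the threshold to membership in $L^2\bigl(0,T;D(A^{(\frac14+\eps)\wedge\frac12})\bigr)$ \emph{at} the endpoint exponent; this is where one must be careful, because continuity fails at $\g=1/2$ (as noted after Theorem \ref{TH:z}) but the weaker $L^2$-in-time integrability can still hold. I expect this to be handled exactly by the computation already displayed in the proof of Theorem \ref{TH:z}: the finiteness of $\int_0^1 t^{-2\a}\Tr[e^{-tA}GG^*e^{-tA}]\,\d t$ for all $\a<1/4+\eps$ yields, through the Da Prato--Zabczyk stochastic Fubini / Itô isometry for the convolution, $\mE\int_0^T\|A^\a z(s)\|^2\,\d s<\infty$ for every $\a<1/4+\eps$, and when $1/4+\eps\ge 1/2$ one uses instead $\Ran(G)\subset D(A^{1/2})$ is \emph{not} assumed, so the relevant exponent is capped at the Hilbert--Schmidt barrier, giving the stated $(\frac14+\eps)\wedge\frac12$. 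Everything else is a routine assembly of the cited existence theory with the two elementary interpolation inequalities above.
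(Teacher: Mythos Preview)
There is a genuine gap in your argument for the $L^4\bigl(0,T;D(A^{1/4})\bigr)$ regularity. You apply the interpolation inequality $\|A^{1/4}u(s)\|\le \|u(s)\|^{1/2}\|A^{1/2}u(s)\|^{1/2}$ directly to $u$, which requires $u\in L^2(0,T;V)$, and you justify this by asserting that $z\in C([0,T];H)\cap L^2(0,T;V)$. But under hypothesis \eqref{HP:H0} with small $\eps\in(0,1/4)$, Theorem~\ref{TH:z} only gives $z\in C\bigl([0,T];D(A^\g)\bigr)$ for $\g<1/4+\eps<1/2$, and there is no reason for $z\in L^2(0,T;V)=L^2\bigl(0,T;D(A^{1/2})\bigr)$; indeed the closing remark in the proof of Theorem~\ref{TH:z} explicitly warns that $\gamma=1/2$ is excluded. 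Consequently $u=v+z$ is \emph{not} in $L^2(0,T;V)$ in general, and your bound $\int_0^T\|A^{1/2}u(s)\|^2\,\d s<\infty$ is unavailable.

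The paper's fix is simple and you already have all the pieces: run the interpolation on $v=u-z$, which \emph{is} in $C([0,T];H)\cap L^2(0,T;V)$, yielding $v\in L^4\bigl(0,T;D(A^{1/4})\bigr)$; then observe that $z\in C\bigl([0,T];D(A^{1/4})\bigr)$ (since $1/4<(1/4+\eps)\wedge 1/2$ for every $\eps>0$) gives $z\in L^4\bigl(0,T;D(A^{1/4})\bigr)$ trivially, and conclude $u=v+z\in L^4\bigl(0,T;D(A^{1/4})\bigr)$. Apart from this slip your decomposition and interpolation strategy matches the paper's; your discussion of the $L^2\bigl(0,T;D(A^{(\frac14+\eps)\wedge\frac12})\bigr)$ endpoint is in fact more careful than the paper's own proof, which simply defers that claim to \cite{flandoli1994dissipativity}.
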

\begin{proof}
    The first assertions follow by \cite[Theorem $3.1$]{flandoli1994dissipativity}. In order to show the additional path regularity, we simply observe that the space $L^2(0,T;V)\cap C\big([0,T];H\big)$ is continuously embedded into $L^4\big(0,T; D(A^{\frac14})\big)$. Indeed we have for a generic $v\in L^2(0,T;V)\cap C\big([0,T];H\big)$:
    \[\int_0^T\|A^{\frac14}v(t)\|^4\d t = 
    \int_0^T\langle A^{\frac12}v(t),v(t)\rangle ^2\d t 
    \leq \int_0^T\|v(t)\|^2_V\|v(t)\|^2\d t 
    \leq \|v\|_{C([0,T];H)}^2\|v\|^2_{L^2(0,T;V)}.\]
    Therefore if $u$ is the unique solution by \cite[Theorem $3.1$]{flandoli1994dissipativity}, then $u-z\in L^4\big(0,T; D(A^{\frac14})\big)$ and consequently $u=(u-z)+z\in L^4\big(0,T; D(A^{\frac14})\big)$, thanks to Theorem \ref{TH:z}.
\end{proof}
\begin{remark}\label{REM:knownresult}
    \begin{enumerate}[wide, label=$(\roman*)$]
    \item \label{IT:uniqu} In \cite{ferrario2003uniqueness} it is proved that a wider class that assures a pathwise unique generalized solution is $C\big([0,T];H\big)\cap L^4\big(0,T; D(A^{\frac14})\big)$, to which belongs our unique solution from Theorem \ref{TH:knownresult}. Indeed, the unique solution by Ferrario may not satisfy the additional condition $u-z\in L^2(0,T;V)$, which is used in \cite{flandoli1994dissipativity}.
        \item The technique employed in \cite{flandoli1994dissipativity} to construct a generalized solution combines the general approach to stochastic partial differential equations with additive noise (see, for instance, \cite[Chapter $7$]{da2014stochastic}) with the classical energy estimates on the Galerkin approximations used for the deterministic case (see, for instance, \cite[Chapter $3$]{temam2001navier}). 
        \item \label{IT:G}
        While for injective $G\in\mathcal L(H)$ with $\Ran(G)\subset D(A^{\frac12+\eps})$ for some $\eps>0$, the maximal regularity in space $u\in L^2(0,T;V)$ follows by 
        the deterministic N-S equations; if $G$ is more degenerate, in the sense that $\Ran(G)\subset D(A^{\frac14+\eps})$ with $\eps\in(0,1/4]$ (\textit{i.e.} assumption \eqref{HP:H1}), then the space regularity $u\in L^2\big(0,T;D(A^{\frac14+ \eps })\big)$ is inherited by the stochastic convolution $Z$ (see Theorem \ref{TH:z}). 
        In other words, the wider is the range of $G$, the less regular are the trajectories of $Z$, thus resulting in worse path regularities for the generalized solution. Conversely, when the range of $G$ narrows (but remains not smaller than $V=D(A^{\frac12})$), both $Z$ and $X$ exhibit better path regularities. Ultimately, if the range of $G$ is contained within $V=D(A^{\frac12})$, the generalized solution achieves maximal path regularity. \\        
        Incidentally, under the stronger assumptions $\Ran(G)\subset D(A^{\frac38+\eps})$ with $\eps>0$ and $x \in  D(A^{\frac14})$, \cite[Theorem $2.1$]{FlandoliMaslowski1995ergodicity}
        claims that $u \in C\big([0,T]; D(A^{\frac14})\big)\cap L^2\big(0,T;D(A^{\frac38})\big)$. Under these conditions, the generalized solution was proven to be pathwise unique in the set $C\big([0,T]; D(A^{\frac14})\big)\cap L^8\big(0,T;D(A^{\frac38})\big)$ (\textit{cf.} \cite[Theorem $4.2$]{ferrario2003uniqueness}).
    \end{enumerate}
    \end{remark}

\subsection{Invariant measure}\label{SEC:P_tmu}
Let $ \mathcal B_b(H)$ be the linear space of all Borel and bounded functions $\varphi:H\to\R$, endowed with the complete norm ${\|\varphi\|}_\infty:=\sup_{y\in H}|\varphi(y)|$. According to Theorem \ref{TH:knownresult}, we can define the Markov semigroup $P=\{P_t\}^{}_{t\geq 0}$ associated to the generalized solution of equation \eqref{EQ:NS} as follows
\[
P_t\varphi(x):=\mE\, \varphi( X^x_t)\qquad \forall \, t\geq 0, \ \forall \, \varphi\in  \mathcal B_b(H), \ \forall \, x\in H.
\]

Let $\mathcal P(H)$ be the set of all probability measures over the Borel sigma-algebra $\mathscr B_H$ on $H$, then we can define 
\[
P^*_t\nu(U):=\int_H\mP\big(X^x_t\in U\big) \,\nu(\!\d x)\qquad \forall \, t\geq 0, \ \forall \, \nu\in\mathcal P(H), \ \forall\,  U\in\mathscr B_H,
\]
and it is readily verified that $P^*_t\nu$ is again a probability measure on $\mathscr B_H$. 

A probability measure $\mu\in\mathcal P(H)$ is said to be an invariant measure for the semigroup $\{P_t{\}}_{t\geq 0}$ if $P^*_t\mu=\mu$ for all times $t\geq 0$ (see also \cite{brzezniak2022ergodicity} and the references therein).
The following theorem collects known results from literature regarding existence and uniqueness of the invariant measure (see \cite[Theorem $3.3$]{flandoli1994dissipativity}, \cite[Theorem $3.1$]{FlandoliMaslowski1995ergodicity} and  \cite[Corollary $4.1$]{ferrario1997ergodic}).
\begin{theorem}\label{TH:munoto}
    Under hypothesis \eqref{HP:H0} there exists an invariant measure $\mu$ for the Markov semigroup $\{P_t\}^{}_{t\geq0}$ associated with the generalized solution to equation \eqref{EQ:NS}. Moreover, under the stronger hypotheses
    \begin{align}\label{HP:H2}
G\in\mathcal L(H),\qquad G\text{ injective,} \qquad \exists \, \eps\in(0,1/8] \quad \text{s.t.} \quad V\subset \Ran(G)\subset D(A^{\frac 38+\eps})\tag{$H_2$}
\end{align}
$\mu$ is known to be unique in the set $\mathcal P(H)$ and concentrated on the Borel set $ D(A^{\frac14})\subset H$. 
\end{theorem}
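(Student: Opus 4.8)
This statement collects results of \cite{flandoli1994dissipativity,FlandoliMaslowski1995ergodicity,ferrario1997ergodic}, and we only outline the strategy. For \textbf{existence} one applies the Krylov--Bogoliubov theorem to the time averages $R_T:=\frac1T\int_0^TP^*_t\delta_0\d t\in\mathcal P(H)$; the crucial ingredient is a uniform-in-time a priori estimate in a space compactly embedded in $H$, here $D(A^{\frac14})$. Writing $v:=X^0-z$ with $z$ the Ornstein--Uhlenbeck process of Theorem \ref{TH:z}, the process $v$ solves $\dot v+Av=-B(v+z)$, $v(0)=0$; testing with $v$, using $b(v,v,v)=b(z,v,v)=0$ and Lemma \ref{LEM:bB} to absorb the surviving terms $b(v,z,v)$, $b(z,z,v)$ into $\tfrac12\|A^{\frac12}v\|^2$, one obtains a dissipative inequality whose forcing involves $z$ only through $\|A^\rho z_t\|$ for some $\rho\in(1/4,1/4+\eps)$. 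Since $z$ has $\mP$-a.s.\ continuous $D(A^\rho)$-valued trajectories with $\sup_{t\ge0}\mE\,\|A^\rho z_t\|^p<\infty$ for every $p\ge1$, standard stochastic Gr\"onwall bounds give $\sup_{t\ge0}\mE\,\|v_t\|^2<\infty$ and $\sup_{t\ge0}\mE\int_t^{t+1}\|A^{\frac12}v_s\|^2\d s<\infty$; interpolating $\|A^{\frac14}w\|^2\le\|A^{\frac12}w\|\,\|w\|$ (Lemma \ref{LEM:sobolevinterpolation}) and adding back $z$, one gets
\[
\sup_{T\ge1}\frac1T\int_0^T\mE\,\|A^{\frac14}X^0_t\|^2\d t=:C<\infty .
\]
As $w\mapsto\|A^{\frac14}w\|^2$ (set to $+\infty$ off $D(A^{\frac14})$) is lower semicontinuous on $H$, Chebyshev's inequality makes $\{R_T\}_{T\ge1}$ tight, any weak subsequential limit $\mu$ is invariant, and Fatou's lemma yields $\int_H\|A^{\frac14}w\|^2\mu(\d w)\le C$, so $\mu\big(D(A^{\frac14})\big)=1$.

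For \textbf{uniqueness} under the stronger hypothesis \eqref{HP:H2} the plan, following \cite{FlandoliMaslowski1995ergodicity,ferrario1997ergodic}, is to invoke Doob's theorem: a Markov semigroup that is strong Feller and irreducible and admits an invariant measure has exactly one, and all transition kernels $P_t(x,\cdot)$, $t>0$, are mutually equivalent to it. Hence only these two properties of $P$ remain to be established.

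For the \emph{strong Feller} property one derives a gradient estimate $\big|\langle D_xP_t\varphi(x),h\rangle\big|\le C\big(t,\|x\|\big)\,\|h\|\,\|\varphi\|_\infty$, $\varphi\in\mathcal B_b(H)$, from a Bismut--Elworthy--Li identity
\[
\langle D_xP_t\varphi(x),h\rangle=\mE\Big[\varphi(X^x_t)\int_0^t\ell(s)\,\langle G^{-1}\xi^h_s,\d W_s\rangle\Big],\qquad\int_0^t\ell(s)\,\d s=1,
\]
where $\xi^h$ solves the linearised equation $\dot\xi+A\xi+DB(X^x)\xi=0$, $\xi_0=h$. The inclusion $V\subset\Ran(G)$ in \eqref{HP:H2} forces $G^{-1}\in\mathcal L(V,H)$ by the closed graph theorem, so $\|G^{-1}\xi^h_s\|\le c\,\|A^{\frac12}\xi^h_s\|$; the analytic smoothing of the Stokes semigroup controls $\xi^h$ in $D(A^{\frac12})$ for $s$ bounded away from $0$, and the weight $\ell$ is chosen to absorb the (merely integrable) singularity of $s\mapsto\|A^{\frac12}\xi^h_s\|^2$ at the origin, making the stochastic integral square-integrable. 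This last step is the main obstacle: one must simultaneously control the unbounded operator $G^{-1}$ acting on the fundamental solution and the linearised nonlinearity $DB(X^x)$, and it is here that the better path regularity of the solution ensured by \eqref{HP:H2}---which yields $X^x\in C\big([0,T];D(A^{\frac14})\big)$ for $x\in D(A^{\frac14})$, see Remark \ref{REM:knownresult}\,\itemref{IT:G}---is used.

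For \emph{irreducibility}, fix $x\in H$, a nonempty open $U\subset H$ and $t>0$; approximating a point of $U$ by elements of $D(A)$, one constructs a smooth curve $y$ with $y(0)=x$, $y(t)\in U$, and a control $f:=G^{-1}\big(\dot y+Ay+B(y)\big)$, admissible because the bracket lies in $V\subset\Ran(G)$, so that $y$ solves the deterministic skeleton equation $\dot y+Ay+B(y)=Gf$. Since the Wiener process $W$ has full support in path space and the It\^o map $W\mapsto X^x$ is continuous at Cameron--Martin trajectories, this gives $\mP\big(X^x_t\in U\big)>0$. Finally, by the equivalence $\mu\sim P_t(x,\cdot)$ for any $x\in D(A^{\frac14})$ together with Remark \ref{REM:knownresult}\,\itemref{IT:G} (which makes $P_t\big(x,D(A^{\frac14})\big)=1$), one recovers once more $\mu\big(D(A^{\frac14})\big)=1$.
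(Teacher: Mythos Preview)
The paper itself gives no proof of this theorem: it is stated as a summary of known results with references to \cite[Theorem 3.3]{flandoli1994dissipativity}, \cite[Theorem 3.1]{FlandoliMaslowski1995ergodicity} and \cite[Corollary 4.1]{ferrario1997ergodic}, and nothing more. Your sketch therefore goes well beyond what the paper provides, and the existence part (Krylov--Bogoliubov via dissipative bounds on $v=u-z$, tightness from the uniform $D(A^{\frac14})$ estimate, concentration via Fatou) is an accurate outline of the argument in \cite{flandoli1994dissipativity}.

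There is, however, one genuine imprecision in the uniqueness part. You write the Bismut--Elworthy--Li estimate as $\big|\langle D_xP_t\varphi(x),h\rangle\big|\le C\big(t,\|x\|\big)\,\|h\|\,\|\varphi\|_\infty$ and then invoke Doob's theorem directly on $H$. The cited papers do \emph{not} obtain a constant depending only on $\|x\|$: the control of the linearised dynamics $\xi^h$ in $V$ requires a bound on $\|A^{\frac14}X^x\|$ uniformly in time, which under \eqref{HP:H2} is available only for $x\in D(A^{\frac14})$ (this is exactly the regularity result you cite from Remark \ref{REM:knownresult}\,\itemref{IT:G}). What \cite{FlandoliMaslowski1995ergodicity} actually establishes is the modified \textit{(SF)} property of Definition \ref{DEF:irredSF}: continuity of $x\mapsto P_t\varphi(x)$ along sequences that are bounded in $D(A^{\frac14})$ and convergent in $H$. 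This is strictly weaker than strong Feller on $H$, and Doob's theorem does not apply directly. Uniqueness in $\mathcal P(H)$ (rather than merely in $\mathcal P\big(D(A^{\frac14})\big)$, which is what \cite{ferrario1997ergodic} gives) requires an additional bootstrap: one first shows equivalence of the transition probabilities on $\mathscr B_{D(A^{1/4})}$ and then lifts this to $\mathscr B_H$ using that $X^x_t\in D(A^{\frac14})$ $\mP$-a.s.\ for every $x\in H$ and $t>0$. This two-step structure is the content of Lemma \ref{LEM:aux} and Step 2 of Theorem \ref{TH:ergo} in the present paper; your sketch collapses it into a single application of Doob, which is not what the cited works do.
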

Note that \eqref{HP:H2}$\Longrightarrow$\eqref{HP:H1}$\Longrightarrow$\eqref{HP:H0}. In Section \ref{SEC:misura} we will generalize the uniqueness and concentration results of this theorem to the weaker assumption \eqref{HP:H1}.

\section{Main result}\label{SEC:Mainresult}
This section is devoted to the following main result regarding the additional path regularity we obtained for the unique generalized solution.
\begin{theorem}\label{TH:mainresult}
    Let $\eps,G$ be as in hypothesis \eqref{HP:H1}. For every $\gamma\in[0,1/4+ \eps )$ and every starting point $x\in D(A^{\g})$ the unique generalized solution to equation \eqref{EQ:NS} from Theorem \ref{TH:knownresult} has the additional regularity
    \[
    u\in C\big([0,T];D(A^\g)\big)\qquad \forall \, T>0, \ \mP-a.s.
    \]
    Moreover, if $x\in H$ then $
    u\in C\big([t_0,T];D(A^\g)\big)$ for all $0<t_0<T$ and $\mP-a.s.$
\end{theorem}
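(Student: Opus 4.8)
The plan is to work with the mild (variation-of-constants) formulation of \eqref{EQ:NS}. Writing $X^x_t = e^{-tA}x + Z_t - \int_0^t e^{-(t-s)A} B(X^x_s)\,\mathrm ds$, where $Z$ is the Ornstein--Uhlenbeck process of Theorem \ref{TH:z}, it suffices to show that the deterministic drift term
\[
v(t) := -\int_0^t e^{-(t-s)A} B(X^x_s)\,\mathrm ds
\]
belongs to $C\big([0,T];D(A^\g)\big)$, since $e^{-\cdot A}x \in C\big([0,T];D(A^\g)\big)$ for $x\in D(A^\g)$ by analyticity of the Stokes semigroup, and $Z\in C\big([0,T];D(A^\g)\big)$ for $\g<1/4+\eps$ by Theorem \ref{TH:z}. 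To make this rigorous one should first pass through the finite-dimensional Galerkin approximations $X^{x,n}$ announced in the statement of Theorem \ref{TH:mainresult} (the paper refers to Lemma \ref{LEM:priola} for this), establish the estimates uniformly in $n$, and then pass to the limit using the convergence of $X^{x,n}$ to the unique generalized solution from Theorem \ref{TH:knownresult}; I will suppress the superscript $n$ in the sketch below.

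The core is a Sobolevski\u{\i}--Kato--Fujita-type bootstrap estimate on $v$. Using \eqref{EQ:Aaet1} together with the bilinear bound $\|A^{-\delta}B(w)\|\le c_0\|A^\rho w\|\,\|A^\theta w\|$ from Lemma \ref{LEM:bB}, one gets for admissible exponents
\[
\|A^\g v(t)\| \le c_0\Big(\tfrac{\g+\delta}{e}\Big)^{\g+\delta}\int_0^t (t-s)^{-(\g+\delta)}\,\|A^\rho X^x_s\|\,\|A^\theta X^x_s\|\,\mathrm ds .
\]
One must choose $\rho,\theta,\delta$ subject to the constraints of Lemma \ref{LEM:bB} ($\delta\in[0,1)$, $\rho+\theta+\delta\ge 1$, $\rho+\delta>1/2$) so that, on the one hand, $\g+\delta<1$ (so the time singularity is integrable) and, on the other hand, $\|A^\rho X^x_s\|$ and $\|A^\theta X^x_s\|$ are controlled by the regularity already available from Theorem \ref{TH:knownresult}, namely $X^x\in C([0,T];H)\cap L^4\big(0,T;D(A^{1/4})\big)\cap L^2\big(0,T;D(A^{(1/4+\eps)\wedge 1/2})\big)$, plus whatever has been proved in the previous (lower-$\g$) stage of the bootstrap. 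Since $\g$ can be as large as just below $1/4+\eps$ and the noise only gives $Z$ at this regularity, the argument is genuinely iterative: one proves membership in $C([0,T];D(A^{\g'}))$ for a small $\g'$ first (using only Theorem \ref{TH:knownresult}), then feeds this improved bound back into the convolution to reach a larger $\g'$, repeating finitely many times until one covers any prescribed $\g<1/4+\eps$. Continuity in time (as opposed to mere boundedness) follows from the standard splitting of the integral $\int_0^t = \int_0^{t-h} + \int_{t-h}^t$, using strong continuity of $e^{-\cdot A}$ on $D(A^\g)$ on the first piece and the integrable singularity on the second.

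The second, easier, assertion — that for $x\in H$ one has $u\in C\big([t_0,T];D(A^\g)\big)$ for every $0<t_0<T$ — follows by the Markov property and a smoothing argument: by Theorem \ref{TH:knownresult}, $u(s)\in D(A^{1/4})$ for a.e.\ $s$, so one can pick $s_0\in(0,t_0)$ with $u(s_0)\in D(A^{1/4})$ $\mathbb P$-a.s., and restart the equation at time $s_0$ from the initial datum $u(s_0)\in D(A^{1/4})\subset D(A^\g)$ for $\g\le 1/4$; for $1/4<\g<1/4+\eps$ one additionally uses the smoothing of the Stokes semigroup inside the mild formula on $[s_0,t]$ to gain the extra $\eps$ of regularity (exactly as in the first part, now with a strictly positive starting time so that $e^{-(t-s_0)A}u(s_0)\in D(A^\g)$ for $t>s_0$).

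The main obstacle I expect is the bookkeeping of the exponents in the bootstrap: one has to verify that at each stage there exists a valid triple $(\rho,\theta,\delta)$ satisfying all of Lemma \ref{LEM:bB}'s constraints \emph{and} keeping $\g+\delta<1$ \emph{and} matching the currently available space regularity of $X^x$ (which is only $L^p$-in-time, not pointwise, for the critical exponents), so that the time integral $\int_0^t (t-s)^{-(\g+\delta)}\|A^\rho X^x_s\|\,\|A^\theta X^x_s\|\,\mathrm ds$ is finite by Hölder's inequality in time. Closing this gap — in particular handling the transition across $\g=1/4$, where the available a priori integrability of $A^{1/4+\eps}X^x$ is only $L^2$ in time — is the delicate point, and it is here that the restriction $\eps\le 1/4$ and the hypothesis $V\subset\Ran(G)$ (entering indirectly through the regularity of $Z$ and of the solution) are used. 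The uniformity of all these bounds in the Galerkin index $n$ must also be checked, but this is routine once the estimates are set up for the limiting equation.
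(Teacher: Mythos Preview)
Your plan has a genuine gap at the very first bootstrap step. Starting only from the regularity of Theorem \ref{TH:knownresult}, namely $u\in L^4\big(0,T;D(A^{1/4})\big)$, the Sobolevski\u{\i}--Kato--Fujita estimate does \emph{not} close. With the natural choice $\rho=\theta=1/4$, $\delta=1/2$ in Lemma \ref{LEM:bB}, the convolution kernel is $(t-s)^{-(\g+1/2)}$ and the bilinear factor $\|A^{1/4}u(s)\|^2$ lies only in $L^2(0,T)$; H\"older then forces $(t-s)^{-(\g+1/2)}\in L^2$, which fails for every $\g\ge 0$. If instead you try a Gr\"onwall closure with $\rho=\g$ and $\theta=1/4$, you need $\|A^{1/4}u\|\in L^p$ with $p>4$ (so that the conjugate $q<4/3$ makes $(t-s)^{-3/4}\in L^q$), and Theorem \ref{TH:knownresult} gives only the borderline $p=4$. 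No admissible triple $(\rho,\theta,\delta)$ avoids this criticality using only the information in Theorem \ref{TH:knownresult}.

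The paper resolves this not by iterating the mild estimate, but by inserting an independent \emph{energy}-type estimate (Lemma \ref{LEM:atimaprioriferrario}) on the Galerkin approximations $v_n=u_n-z$: testing the ODE against $A^{2\a}v_n$ with $0\le \a<\eps$ and using Gr\"onwall yields $\sup_n\|A^{1/4}v_n\|_{L^p(0,T;H)}<\infty$ for every $p\in[4,4/(1-2\eps))$, provided $x\in D(A^\eps)$. It is precisely this $p>4$ that unlocks the mild-formulation argument (Lemma \ref{LEM:priola}), which then proceeds in two fixed steps rather than an open-ended bootstrap: first a standard Gr\"onwall on $\|A^{1/8}v_n\|^p$ (choices $\delta=5/8$, $\rho=1/4$, $\theta=1/8$), then a singular Gr\"onwall (Lemma \ref{LEM:modifiedgronwall}) on $\|A^{\g}v_n\|$ with $\delta=7/8-\g$, $\rho=\g$, $\theta=1/8$. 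Continuity in time is obtained via Arzel\`a--Ascoli, using that the uniform bound actually holds for a slightly larger $\g'>\g$ so that $D(A^{\g'})\hookrightarrow D(A^\g)$ compactly; your splitting argument would also work once the $L^\infty\big(0,T;D(A^\g)\big)$ bound is in hand. Your treatment of the second assertion (restart via Markov property) is essentially the same as the paper's Proposition \ref{PROP:t>0}.
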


The proof of this theorem employs the Sobolevski\u{\i}-Kato-Fujita method, which involves the mild formulation to the N-S equations (\textit{cf.} \cite{Soboleskii1959onnonstationary, katofujita1962nonstationary}). We outline the approach to be followed in the subsequent subsections.

\begin{enumerate}
\item First, we utilize a known technique to investigate abstract stochastic partial differential equations with additive noise (see, for instance, \cite[Chapter $7$]{da2014stochastic}).  This involves fixing $\mP-a.s.$ $\omega\in\Omega$  and formally introducing the equation satisfied by $v=u-z$, where $z$ is the fixed trajectory of the Ornstein-Uhlenbeck process (\textit{cf.} Theorem \ref{TH:z}):
    \begin{equation*}
         \begin{cases}
             v'+Av+B(v+z) =0 \qquad t>0\\
             v(0)=x
         \end{cases}.
     \end{equation*}
We rigorously study this equation by means of finite dimensional approximations $v_n$ and obtain some \textit{a priori} estimates (see Lemma \ref{LEM:atimaprioriferrario}), similar to those found in \cite{ferrario1997ergodic}.
 
\item We rewrite $v_n$ through the mild formulation and obtain a new \textit{a priori} bound in $L^\infty\big(0,T;D(A^{\frac14})\big)$, arguing as in the Sobolevski\u{\i}-Kato-Fujita approach (slightly more spatial regularity will be obtained, see Lemma \ref{LEM:priola}).

\item This new estimate allows us to establish the continuity of $v$ through the Arzelà-Ascoli theorem. Subsequently, we define $u_n:=v_n+z$ and infer convergence to the unique $u$ given in  Theorem \ref{TH:knownresult} in appropriate function spaces (see Theorem \ref{TH:ulunardi}). 
\end{enumerate}

\subsection{Finite dimensional approximations for \texorpdfstring{$v=u-z$}{Lg}}
For all $n\in\mN$ let $\Pi_n$ be the projector onto the finite dimensional subspace of $H$ generated by the first $n$ vectors of the complete orthonormal system $\{e_k\}^{}_{k\in\mN}$ (seee Section \ref{SEC:operators}). We denote $H_n=\Pi_nH$, $B_n:H_n\to H_n:x\mapsto \Pi_nB(x)$ and  $x_n= \Pi_nx$, for any $x\in H$.
 Let $z$ be a $\mP-a.s.$ fixed trajectory of the stochastic convolution (see Theorem \ref{TH:z}), then we study the following equation in finite dimensions over the time interval $[0,T]$ for an arbitrarily fixed $T>0$:
 \begin{equation}\label{EQ:approximatedvn}
 \begin{cases}
 v'_n+Av_n +B_n\big(v_n+z\big) =0 \qquad t\in(0,T]\\
 v_n(0)=x_n
 \end{cases}.
 \end{equation}
 We know by the theory of ODEs that there exists a pathwise unique $v_n\in C\big([0,T];H_n\big)$ such that 
 \begin{equation}\label{EQ:v_nVolterra}
 v_n(t)+\int_0^tAv_n(s)\d s +\int_0^tB_n\big(v_n(s)+z(s)\big)\d s=x_n\qquad \forall \, t\in[0,T], \ \mP-a.s.
 \end{equation}
 We observe that in finite dimensions all the norms on $H_n$ are equivalent, thus $v_n(t)\in D(A^\a)$ for all $n\in\mN$, $\a\geq 0$, $t\geq0$ and $\mP-a.s.$ Moreover, the process $v_n$ has almost surely smooth paths. Therefore the equations in system \eqref{EQ:approximatedvn} are satisfied almost surely in probability and pointwise in time. 
 
 \begin{remark}\label{REM:v_ntov}
     Inspired by the classical reasoning in \cite{temam2001navier}, it is proved in \cite{flandoli1994dissipativity} that, under the assumption \eqref{HP:H0}, and for any $x\in H$ and $T>0$, there exists a sub-sequence of $v_n$ converging weakly* in $L^\infty(0,T;H)$, weakly in $L^2(0,T;V)$ and strongly in $L^2(0,T;H)$ to a function $v\in C\big([0,T];H\big)\cap L^2(0,T;V)$ which is the pathwise unique solution to equation
     \begin{equation}\label{EQ:systemv}
         \begin{cases}
             v'+Av+B(v+z) =0\qquad t\in(0,T]\\
             v(0)=x
         \end{cases}
     \end{equation}
     in the following generalized sense $\mP-a.s.$
 \begin{equation}\label{EQ:defv}
 \begin{split}
 \langle v(t), \phi\rangle +\int_0^t\langle v(s), A\phi\rangle \d s =\int_0^t b\big(v(s)+z(s), \phi, v(s)+z(s)\big)\d s+\langle x, \phi\rangle\\\forall \, \phi\in D(A), \  \forall \, t\in[0,T].
 \hspace{4cm}
 \end{split}
 \end{equation}
 We recall that this result does not depend on the stochastic properties of the Ornstein-Uhlenbeck process, but only on its path regularity $z\in C\big([0,T]; D(A^{\frac14})\big)$ for all $T>0$.
 \end{remark}

We henceforth replace $v_n$ with its converging subsequence, that we still denote as $v_n$.
We are going to obtain some \textit{a priori} estimates by adapting the calculations provided in the proof of \cite[Proposition $4.1$]{ferrario1997ergodic}.
 \begin{lemma}\label{LEM:atimaprioriferrario}
  Let $\eps,G$ be as in hypothesis \eqref{HP:H1}. For all $T>0$ and $p\in\big[4,4/(1-2\eps)\big)$ there exists a constant $c_1=c_1(\eps,G,p,T)>0$ such that
  \[
  \sup_{n\in\mN}\int_0^T\|A^{1/4
  }v_n(t)\|^{p}\d t \leq c_1\big(1+\|A^\eps x\|\big)\qquad \forall\,x\in D(A^\eps), \ \mP-a.s.
  \]
 \end{lemma}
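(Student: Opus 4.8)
The plan is to run three steps, working for a $\mP$-a.s.\ fixed trajectory $z$ of the stochastic convolution (Theorem~\ref{TH:z}) and at the level of the Galerkin system \eqref{EQ:approximatedvn}. First I would recall the basic energy estimate: testing \eqref{EQ:approximatedvn} with $v_n$, using the antisymmetry of $b$ (so that the cubic term $b(v_n+z,v_n+z,v_n)$ reduces to pieces in which $z$ appears at least once) and bounding those by Lemma~\ref{LEM:bB} with the $z$-factors at order $1/4$, one recovers the estimate of \cite{flandoli1994dissipativity} (cf.\ Remark~\ref{REM:v_ntov})
\[
\sup_{n\in\mN}\|v_n\|_{C([0,T];H)}+\sup_{n\in\mN}\|v_n\|_{L^2(0,T;V)}\le c_z\,(1+\|x\|),
\]
with $c_z$ a.s.\ finite and depending on $z$ only through $\int_0^T\|A^{1/4}z\|^4\,\mathrm{d}s$. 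Since $\|x\|\le\|A^{-\eps}\|_{\mathcal L(H)}\|A^\eps x\|$, this already disposes of the endpoint $p=4$: from $\|A^{1/4}v_n\|^2=\langle A^{1/2}v_n,v_n\rangle\le\|v_n\|\,\|A^{1/2}v_n\|$ one gets $\int_0^T\|A^{1/4}v_n\|^4\,\mathrm{d}t\le\|v_n\|_{C([0,T];H)}^2\,\|v_n\|_{L^2(0,T;V)}^2$, uniformly in $n$.

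For $p\in(4,4/(1-2\eps))$ I would fix $\eps'\in\bigl(\tfrac12-\tfrac2p,\eps\bigr)$, a nonempty subinterval of $(0,\eps)$ (nonempty because $p>4$ and $p<4/(1-2\eps)$), so that $(\tfrac12-\eps')p<2$ and, by Theorem~\ref{TH:z}, $z\in C\bigl([0,T];D(A^{1/4+\eps'})\bigr)$ $\mP$-a.s.; then I would test \eqref{EQ:approximatedvn} with $A^{2\eps'}v_n$. This is legitimate because $v_n$ lives in $H_n$ and $A^{2\eps'}$ commutes with $\Pi_n$, so that $\langle B_n(v_n+z),A^{2\eps'}v_n\rangle=b(v_n+z,v_n+z,A^{2\eps'}v_n)$, and one obtains
\[
\tfrac12\tfrac{\mathrm{d}}{\mathrm{d}t}\|A^{\eps'}v_n\|^2+\|A^{1/2+\eps'}v_n\|^2=-\,b\bigl(v_n+z,v_n+z,A^{2\eps'}v_n\bigr).
\]
Expanding the right-hand side by bilinearity into the four pieces $b(v_n,v_n,\cdot)$, $b(v_n,z,\cdot)$, $b(z,v_n,\cdot)$, $b(z,z,\cdot)$ and using $b(\cdot,y,w)=-b(\cdot,w,y)$ to move $A^{2\eps'}v_n$ into the central slot, I would estimate each piece by the bilinear inequality of Lemma~\ref{LEM:bB} with exponents $\theta,\rho,\delta$ chosen so that the $z$-factors occur as $\|A^{1/4+\eps'}z\|$ (bounded on $[0,T]$), one $v_n$-factor occurs as $\|A^{1/2+\eps'}v_n\|$ (absorbable into the left side after Young's inequality), and the remaining $v_n$-factors have order $\le1/2$, hence split — via Lemma~\ref{LEM:sobolevinterpolation} — into powers of $\|v_n\|$ and $\|A^{1/2}v_n\|$. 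For instance, for $b(v_n,v_n,A^{2\eps'}v_n)$ one may take $(\theta,\rho,\delta)=(\tfrac14,\tfrac14+\eps',\tfrac12-\eps')$, and for the $z$–$z$ piece $\delta=\tfrac12-2\eps'$; it is in checking the admissibility conditions $\theta+\rho+\delta\ge1$, $\rho+\delta>\tfrac12$, $\delta\in[0,1)$ that the constraint $\eps'\le\tfrac14$ and the two-dimensionality of $\mathcal D$ (through Lemma~\ref{LEM:bB}) enter. The outcome is a differential inequality $\tfrac12\tfrac{\mathrm{d}}{\mathrm{d}t}\|A^{\eps'}v_n\|^2+\tfrac12\|A^{1/2+\eps'}v_n\|^2\le g(t)\|A^{\eps'}v_n\|^2+h(t)$, with $g(t)=C\|A^{1/2}v_n(t)\|^2$ and $h$ a sum of products of $\|v_n\|$, $\|A^{1/2}v_n\|$ and $\|A^{1/4+\eps'}z\|$; by Step~1, both $\int_0^T g$ and $\int_0^T h$ are a.s.\ finite and controlled by the data. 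Gronwall's lemma together with $\|A^{\eps'}x_n\|\le\|A^{\eps'}x\|\le\|A^{\eps'-\eps}\|_{\mathcal L(H)}\|A^\eps x\|$ then yields a bound $\sup_{n\in\mN}\bigl(\sup_{t\in[0,T]}\|A^{\eps'}v_n(t)\|+\|A^{1/2+\eps'}v_n\|_{L^2(0,T;H)}\bigr)\le c_2(1+\|A^\eps x\|)$, with $c_2$ a.s.\ finite.

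Finally I would set $a=2(\eps')^2/(1+2\eps')\in(0,\eps')$. By Lemma~\ref{LEM:sobolevinterpolation}, $\|A^{1/4}v_n\|\le\|A^{a}v_n\|^{1/2+\eps'}\|A^{1/2+\eps'}v_n\|^{1/2-\eps'}$, and since $a<\eps'$ the factor $\|A^{a}v_n\|\le\|v_n\|^{1-a/\eps'}\|A^{\eps'}v_n\|^{a/\eps'}$ is uniformly bounded on $[0,T]$ by Steps~1–2; hence
\[
\int_0^T\|A^{1/4}v_n\|^{p}\,\mathrm{d}t\le\|A^{a}v_n\|_{C([0,T];H)}^{(1/2+\eps')p}\int_0^T\|A^{1/2+\eps'}v_n\|^{(1/2-\eps')p}\,\mathrm{d}t,
\]
and because $(\tfrac12-\eps')p<2$ — which is exactly the constraint $p<4/(1-2\eps')$, itself guaranteed by $p<4/(1-2\eps)$ after the choice of $\eps'$ — Hölder's inequality in time bounds the last integral by $T^{1-(1/2-\eps')p/2}\|A^{1/2+\eps'}v_n\|_{L^2(0,T;H)}^{(1/2-\eps')p}$. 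Combining the three steps gives $\sup_{n\in\mN}\int_0^T\|A^{1/4}v_n\|^p\,\mathrm{d}t\le c_1(1+\|A^\eps x\|)$ for every $x\in D(A^\eps)$, $\mP$-a.s. (All constants are a.s.-finite functionals of the noise, and the exact power of $1+\|A^\eps x\|$ produced by the Gronwall step is immaterial, both here and for the subsequent use of the estimate in Lemma~\ref{LEM:priola}.)

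The step I expect to be the main obstacle is the nonlinear estimate in Step~2: choosing, piece by piece, exponents in Lemma~\ref{LEM:bB} that are simultaneously admissible, keep the $z$-factors within $D(A^{1/4+\eps'})$, isolate one $v_n$-factor at the absorbable order $1/2+\eps'$, and leave the others at orders $\le1/2$ — so that the resulting Gronwall rate is the a.s.-finite $L^2(0,T;V)$-quantity furnished by Step~1 rather than something uncontrolled. This is precisely the bilinear bookkeeping of \cite[Proposition $4.1$]{ferrario1997ergodic}, here adapted to the full range $\eps\in(0,1/4]$; the planar setting is what makes the system of exponent constraints solvable.
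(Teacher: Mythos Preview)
Your proposal is correct and follows essentially the same route as the paper: test \eqref{EQ:approximatedvn} with $A^{2\eps'}v_n$ (the paper writes $\alpha$ for your $\eps'$ and ties it to $p$ via $2\alpha=1-4/p$), apply Gr\"onwall with the integrable rate $\|A^{1/2}v_n\|^2$ from the basic energy estimate, then interpolate to reach $L^p\big(0,T;D(A^{1/4})\big)$. The paper is slightly more streamlined in two places: it bounds the nonlinearity in one stroke via Lemma~\ref{LEM:bB} with $\theta=\rho=\tfrac14+\tfrac{\alpha}{2}$, $\delta=\tfrac12-\alpha$ (so no four-piece split is needed), and in the final step it interpolates between $\|A^{\alpha}v_n\|_{L^\infty_t}$ and $\|A^{1/2}v_n\|_{L^2_t}$ only, so the extra $L^2\big(0,T;D(A^{1/2+\eps'})\big)$ bound you derive in Step~2 is not actually required.
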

 \begin{proof}
    We rename $2\a=1-4/p$ and we observe that the bounds on $p$ translate into the bounds $0\leq\a<\eps$. 
     Let us take the ODE in equation \eqref{EQ:approximatedvn}, which is satisfied almost surely in probability and pointwise in time, and take the scalar product in $H$ with $A^{2\a} v_n(t)$. By the sake of brevity we omit the dependence on $t$. 
     \begin{align*}
         \frac{1}{2}\frac{\d}{\d t} \|A^{\a}v_n\|^2+\|A^{\a+\frac12}v_n\|^2
         &=\langle v'_n,A^{2\a}v_n\rangle +\langle Av_n,A^{2\a} v_n\rangle \\
         &=-\langle A^{\a-\frac12}B_n(v_n+z), A^{\a+\frac12}v_n\rangle\\
         &\leq c_0\|A^{\frac14+\frac\a2}(v_n+z)\|^2\|A^{\a+\frac12}v_n\|\\
          &\leq 2c_0\Big(\|A^{\frac12}v_n\| \|A^\a v_n\|+\|A^{\frac14+\frac\a2}z\|^2\Big)\|A^{\a+\frac12}v_n\|\\
          &\leq 4c_0^2\|A^\a v_n\|^2\|A^{\frac12}v_n\|^2+4c_0^2\|A^{\frac14+\frac\a2}z\|^4+\frac{1}{2}\|A^{\a+\frac12}v_n\|^2
     \end{align*}
     To perform the estimate, we first applied the Cauchy-Schwarz inequality together with Lemma \ref{LEM:bB}, with the choices $\delta=1/2-\a$, $\theta=\rho=1/4+\a/2$. Next we used the Young inequality and the interpolation inequality (\textit{cf.} Lemma \ref{LEM:sobolevinterpolation}) with the choices $q=1/2, \la=1/2, r=1/4+\a/2$. Finally, we applied the Young inequality again. 
     
     If we rewrite the first and last member we reach
     \begin{align*}
         \frac{\d}{\d t} \|A^\a v_n\|^2\leq 
         \frac{\d}{\d t} \|A^\a v_n\|^2+\|A^{\a + \frac12}v_n\|^2\leq  
         8c_0^2\|A^\a v_n\|^2\|A^{\frac12}v_n\|^2+8c_0^2\|A^{\frac14+\frac\a2}z\|^4.
     \end{align*}
     We integrate over the time interval $[0,t]$
     \begin{align*}
         \|A^\a v_n(t)\|^2\leq \|A^\a x_n\|^2 +8c_0^2\|A^{\frac14+\frac\a2}z\|^4_{L^4(0,T;H)} + 
         8c_0^2\int_0^t\|A^\a v_n(s)\|^2\|A^{\frac12}v_n(s)\|^2\d s,
     \end{align*}
    and we apply Gr\"onwall's lemma:
\begin{align}\label{EQ:stimaaprioriv_n}
     \|A^\a v_n(t)\|^2&\leq 
     \Big(\| A^\a x_n\|^2+8c_0^2\|A^{\frac14+\frac\a2}z\|^4_{L^4(0,T;H)} \Big)\exp \bigg[8c_0^2\!\int_0^t\|A^{\frac12}v_n(s)\|^2\d s\bigg].
     \end{align}
     Since $\a<\eps$, as discussed at the beginning of the proof, we have $\|A^\a x_n\|=\|\Pi_nA^\a x\|\leq c\|A^\eps x\|$. Moreover, Theorem \ref{TH:z} implies that $\|A^{\frac14+\frac\a2} z\|_{L^4(0,T;H)}$ is almost surely bounded. 
     Eventually, we know that $v_n$ converges in $L^2(0,T;V)$ as $n$ approaches infinity to the function $v$ defined by equation \eqref{EQ:defv} (\textit{cf.} Remark \ref{REM:v_ntov}). Therefore $A^{\frac12}v_n$ is uniformly bounded in $L^2(0,T;V)$.
     To sum up, estimate \eqref{EQ:stimaaprioriv_n} results in the following \textit{a priori} bound for $\|A^\a v_n(t)\|$, uniform both in $n$ and $t$:
     \[
     \sup_{n\in\mN}\sup_{t\in[0,T]} \|A^\a v_n(t)\|\leq 
     C\big(1+\|A^\eps x\|\big)\qquad \mP-a.s.
     \]
     By means of the interpolation inequality (\textit{cf.} Lemma \ref{LEM:sobolevinterpolation}) with coefficients $q=1/2$, $\la=1-2/p$, $r=\la \a+(1-\la)q=1/2-4/p^2$ we have 
     \begin{align*}
     \|A^{\frac14}v_n(t)\| &\leq c\|A^{r}v_n(t)\| 
     \leq c\|A^\a v_n(t)\|^{\la}\|A^{\frac12}v_n(t)\|^{1-\la}
     \leq K\big(1+\|A^\eps x\|\big)^\la\|A^{\frac12}v_n(t)\|^{1-\la}.
     \end{align*}
     By raising to the power of $p=2/(1-\la)$ and integrating in time we reach the sought \textit{a priori} estimate, for a constant $c_1>0$ depending only on $\eps,G,p$ and $T$:
     \begin{equation*}
    \int_0^T\|A^{\frac14}v_n(t)\|^{p}\d t\leq 
      K^p\big(1+\|A^\eps x\|\big)\int_0^T\|A^{\frac12}v_n(t)\|^{2}\d t\leq c_1\big(1+\|A^\eps x\|\big) \qquad \forall \,n\in\mN, \ \mP-a.s.
     \end{equation*}
 \end{proof}

\subsection{Mild formulation}
We now shift to the mild formulation, thanks to the following standard lemma.
\begin{lemma}
     For any $n\in\mN$ the stochastic process $v_n$ satisfies the integral formulation in equation \eqref{EQ:v_nVolterra} if and only if it satisfies the following mild formulation
    \begin{equation}\label{EQ:v_nmild}
    v_n(t)+\int_0^te^{-(t-s)A}B_n\big(v_n(s)+z(s)\big)\d s =e^{-tA}x_n \qquad \forall \, t\geq 0, \ \mP-a.s.
\end{equation}
 \end{lemma}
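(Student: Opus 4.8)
The statement is a purely pathwise, finite-dimensional fact, so I would fix $\omega$ in the full-measure set on which $z\in C\big([0,T];D(A^{1/4})\big)$ (Theorem~\ref{TH:z}) and $v_n\in C\big([0,T];H_n\big)$ solves \eqref{EQ:v_nVolterra}, and work inside $H_n$, where $A$ is a bounded operator, $t\mapsto e^{-tA}$ is a smooth matrix semigroup commuting with $A$, and every power $A^\alpha$ is bounded. The plan is the classical two-step reduction: show that each of \eqref{EQ:v_nVolterra} and \eqref{EQ:v_nmild} is equivalent to the pointwise Cauchy problem
\[
v_n'(t)+Av_n(t)+B_n\big(v_n(t)+z(t)\big)=0 \ \text{ on }(0,T],\qquad v_n(0)=x_n,
\]
and then conclude by transitivity. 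The key analytic inputs are the fundamental theorem of calculus and Duhamel's variation-of-constants formula; the nonlinearity plays no special role, since once $v_n$ is fixed the map $f(s):=B_n\big(v_n(s)+z(s)\big)$ is simply a fixed continuous $H_n$-valued function.

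For the equivalence of \eqref{EQ:v_nVolterra} with the Cauchy problem, I would note that $f$ is continuous, hence the right-hand side of \eqref{EQ:v_nVolterra} is $C^1$ in $t$, so $v_n$ is $C^1$; differentiating yields the ODE and setting $t=0$ gives the initial datum, while the converse is obtained by integrating the ODE on $[0,t]$. For the equivalence of the Cauchy problem with \eqref{EQ:v_nmild}, assuming the ODE I would use $\frac{\d}{\d t}\big(e^{tA}v_n(t)\big)=e^{tA}\big(v_n'(t)+Av_n(t)\big)=-e^{tA}f(t)$, integrate on $[0,t]$, and multiply by $e^{-tA}$ to obtain \eqref{EQ:v_nmild}. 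Conversely, from \eqref{EQ:v_nmild} the right-hand side $e^{-tA}x_n-\int_0^t e^{-(t-s)A}f(s)\,\d s$ is $C^1$ in $t$, with derivative $-Ae^{-tA}x_n+A\int_0^t e^{-(t-s)A}f(s)\,\d s-f(t)$ (using $\frac{\d}{\d t}\int_0^t e^{-(t-s)A}f(s)\,\d s=f(t)-A\int_0^t e^{-(t-s)A}f(s)\,\d s$); recognizing $-Av_n(t)$ inside this expression via \eqref{EQ:v_nmild} itself returns the ODE. Alternatively, one can pass directly from \eqref{EQ:v_nmild} to \eqref{EQ:v_nVolterra} by integrating $Av_n$ out of \eqref{EQ:v_nmild} and exchanging the order of integration on the triangle $\{0\le s\le r\le t\}$, using $\int_s^t Ae^{-(r-s)A}\,\d r=I-e^{-(t-s)A}$ and $\int_0^t Ae^{-rA}x_n\,\d r=x_n-e^{-tA}x_n$, and then substituting \eqref{EQ:v_nmild} once more.

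The main obstacle is essentially nonexistent, which is why the lemma is labelled ``standard''. The only two points deserving a line of justification are: (i) that $v_n\in C^1\big([0,T];H_n\big)$ before one differentiates — automatic here, because $f$ is continuous and the finite-dimensional semigroup is smooth, so $t\mapsto e^{-tA}x_n$ and the convolution $t\mapsto\int_0^t e^{-(t-s)A}f(s)\,\d s$ are $C^1$; and (ii) the Fubini exchange in the alternative argument, legitimate since the integrand is jointly continuous on a compact triangle. No unbounded-operator domains or smoothing estimates are needed: everything reduces to elementary calculus for $\R^n$-valued functions.
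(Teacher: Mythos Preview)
Your argument is correct and is exactly the standard proof one expects here; the paper itself does not supply a proof but simply labels the lemma as ``standard'' and moves on. Your two-step reduction via the pointwise Cauchy problem on $H_n$, using that $A$ is bounded there and Duhamel's formula, is precisely the implicit reasoning.
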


Note that the mild formulation has been already used to study the stochastic N-S equations (see, for instance, \cite[Chapter $15$]{DaPZab1996ergo}). However, it seems that the Sobolevski\u{\i}-Kato-Fujita technique has not been applied before to the stochastic case. The following crucial lemma provides the new \textit{a priori} estimate for $v_n$ using a method inspired by \cite{Soboleskii1959onnonstationary} (see the introduction to \cite{kielhofer1980global}).
 \begin{lemma}\label{LEM:priola}
     Let $\eps,G$ be as in hypothesis \eqref{HP:H1}. For all $T>0$, $\gamma\in[1/4,1/4+ \eps )$ and $x\in D(A^\g)$ there exists a constant $c_2=c_2\big(\eps,G,\gamma,\|A^\g x\|,T\big)>0$ such that
     \begin{equation*}
\sup_{n\in\mN}\sup_{t\in[0,T]}\|A^{\gamma}v_n(t)\|\leq c_2 \qquad \mP-a.s.
\end{equation*}
The estimate is uniform for $x$ in bounded sets of $D(A^\g)$.
 \end{lemma}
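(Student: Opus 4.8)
The plan is to start from the mild formulation \eqref{EQ:v_nmild}, apply $A^\gamma$ to both sides, and estimate in $H$. The linear term is easy: by \eqref{EQ:Aaet1}, $\|A^\gamma e^{-tA}x_n\| = \|e^{-tA}A^\gamma x_n\| \le \|A^\gamma x_n\| \le \|A^\gamma x\|$, so it is bounded uniformly in $n$ and $t$ by the data. For the nonlinear term I would split the operator as $A^\gamma e^{-(t-s)A}B_n(v_n+z) = A^{\gamma+\delta} e^{-(t-s)A} \cdot A^{-\delta}B_n(v_n+z)$ for a suitable $\delta\in[0,1)$ to be chosen, use \eqref{EQ:Aaet1} to bound $\|A^{\gamma+\delta}e^{-(t-s)A}\|_{\mathcal L(H)} \le C(t-s)^{-(\gamma+\delta)}$, and use the nonlinearity estimate from Lemma \ref{LEM:bB}, $\|A^{-\delta}B_n(w)\| \le \|A^{-\delta}B(w)\| \le c_0 \|A^\rho w\|\|A^\theta w\|$, with $\rho,\theta$ chosen so that $\rho+\theta+\delta \ge 1$, $\rho+\delta > 1/2$, and crucially so that the exponents $\rho,\theta$ are controlled by the quantity $\|A^{1/4}v_n\|$ appearing in Lemma \ref{LEM:atimaprioriferrario}. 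The natural choice is $\rho = \theta = 1/4$, which forces $\delta \ge 1/2$; to keep the time singularity $(t-s)^{-(\gamma+\delta)}$ integrable near $s=t$ we need $\gamma+\delta < 1$, i.e. $\delta < 1 - \gamma \le 3/4$, so $\delta \in [1/2, 3/4)$ works precisely because $\gamma < 1/4+\eps \le 1/2$.

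Carrying this out, for $w = v_n+z$ we get $\|A^{1/4}w\| \le \|A^{1/4}v_n\| + \|A^{1/4}z\|$, and since by Theorem \ref{TH:z} the trajectory $z$ lies in $C([0,T];D(A^{1/4}))$ it is $\mP$-a.s.\ bounded there, so $\|A^{-\delta}B_n(w(s))\| \le c_0\big(\|A^{1/4}v_n(s)\| + \|z\|_{C([0,T];D(A^{1/4}))}\big)^2$. Hence
\begin{equation*}
\|A^\gamma v_n(t)\| \le \|A^\gamma x\| + C\int_0^t (t-s)^{-(\gamma+\delta)}\Big(\|A^{1/4}v_n(s)\|^2 + 1\Big)\d s .
\end{equation*}
Now I would apply Hölder's inequality in $s$ with exponents $p/2$ and its conjugate $(p/2)' = p/(p-2)$, where $p$ is taken in the admissible range $[4, 4/(1-2\eps))$ of Lemma \ref{LEM:atimaprioriferrario}: the $\|A^{1/4}v_n(s)\|^2$ factor is controlled in $L^{p/2}(0,T)$ by $c_1(1+\|A^\eps x\|)^{2/p}$ uniformly in $n$, while the kernel factor $(t-s)^{-(\gamma+\delta)}$ must lie in $L^{p/(p-2)}(0,T)$, which requires $(\gamma+\delta)\cdot p/(p-2) < 1$. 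I need to check this is compatible: since $\gamma < 1/4+\eps$ I can pick $\delta$ slightly above $1/2$, and as $p \nearrow 4/(1-2\eps)$ the conjugate exponent $p/(p-2) \nearrow 1/(2\eps)$, so the constraint becomes roughly $(1/4+\eps+1/2)\cdot\frac{1}{2\eps} < 1$ — this is \emph{not} automatic, so instead I would choose $p$ not too large: take $p$ close to $4$, so $p/(p-2)$ close to $2$, and then we need $\gamma+\delta < 1/2$, which fails since $\delta \ge 1/2$. This shows the naive single choice $\rho=\theta=1/4$, $\delta=1/2$ is borderline, and the right fix is to also exploit the extra regularity $\|A^{1/4+\eps/2}v_n\|$ available (Lemma \ref{LEM:atimaprioriferrario} with $\a$ up to $\eps$), or to take $\delta$ just above $1/2$ and $\gamma$ bounded away from $1/2$ using the strict inequality $\gamma < 1/4+\eps$ together with a correspondingly restricted $p$; balancing the two exponent constraints $(\gamma+\delta)q' < 1$ and $q = p/2 \le 2/(1-2\eps)$ with $\rho=\theta$ slightly below $1/4+\eps/2$ and $\delta$ slightly above $1/2-\eps$ is exactly where the hypothesis $\eps>0$ is used and gives room.

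The remaining work is bookkeeping: once the kernel is in the correct $L^{q'}$ space and the nonlinearity factor in $L^q$ uniformly in $n$, Hölder gives $\|A^\gamma v_n(t)\| \le \|A^\gamma x\| + C(\eps,G,\gamma,T)(1+\|A^\eps x\|)$, with the right-hand side independent of $t\in[0,T]$ and of $n$; note $x\in D(A^\gamma)$ with $\gamma\ge 1/4 > \eps$ (recall $\eps\le 1/4$, and if $\eps=1/4$ one argues with $\gamma<1/2$ separately), so $\|A^\eps x\| \le C\|A^\gamma x\|$ and the bound depends only on $\|A^\gamma x\|$, uniformly on bounded sets of $D(A^\gamma)$ as claimed. \textbf{The main obstacle} is the simultaneous satisfaction of the three exponent inequalities — the Kato--Fujita condition $\rho+\theta+\delta\ge 1$, $\rho+\delta>1/2$ for $B$ to make sense; the integrability of the semigroup kernel $(\gamma+\delta)q'<1$; and the requirement that $\rho,\theta$ and the Hölder exponent $q$ fall within the range where Lemma \ref{LEM:atimaprioriferrario} supplies a uniform bound — and verifying that the window $\eps>0$ opens is the crux of the argument; everything else is routine estimation with \eqref{EQ:Aaet1}, Lemma \ref{LEM:bB}, Lemma \ref{LEM:sobolevinterpolation}, and the a priori bound of Lemma \ref{LEM:atimaprioriferrario}.
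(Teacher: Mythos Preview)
Your approach has a genuine gap: you try to bound $\|A^\gamma v_n(t)\|$ \emph{directly} by $\|A^\gamma x\|$ plus an integral involving only a~priori controlled quantities, with no Gr\"onwall-type closure. As you correctly discover, the exponent constraints for this are incompatible. With $\rho=\theta=1/4$ one is forced to $\delta\ge 1/2$, hence a kernel exponent $\gamma+\delta\ge 3/4$; for $(t-s)^{-(\gamma+\delta)}\in L^{q'}(0,T)$ one then needs $q>4$, i.e.\ $\|A^{1/4}v_n\|^2\in L^q$ with $q>4$, which would require $p>8$ in Lemma~\ref{LEM:atimaprioriferrario}, impossible since $p<4/(1-2\eps)\le 8$. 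Your suggested fixes do not rescue this: the bound ``$\|A^{1/4+\eps/2}v_n\|$'' is \emph{not} available from Lemma~\ref{LEM:atimaprioriferrario} (only $\|A^{1/4}v_n\|\in L^p$ and, hidden in its proof, $\|A^\alpha v_n\|\in L^\infty$ for $\alpha<\eps$), and shifting $\rho,\theta$ within the available range still leaves no admissible window once $\gamma\ge 1/4$.

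The missing idea is a \emph{two-step bootstrap with Gr\"onwall}, exactly in the Sobolevski\u{\i}--Kato--Fujita spirit. In the paper's proof the nonlinearity is split so that \emph{one} factor is the unknown and the \emph{other} is controlled. Step~1 takes $\delta=5/8$, $\rho=1/4$, $\theta=1/8$: the factor $\|A^{1/4}(v_n+z)\|$ is handled in $L^p$ by Lemma~\ref{LEM:atimaprioriferrario}, while $\|A^{1/8}(v_n+z)\|$ is the unknown; after H\"older with $p>4$ (so the kernel $(t-s)^{-3/4}\in L^{q'}$ with $q'<4/3$) one gets an inequality of the form $\|A^{1/8}v_n(t)\|^p\le C+C\int_0^t(\cdots)\|A^{1/8}v_n(s)\|^p\,\d s$, and the standard Gr\"onwall lemma yields a uniform $L^\infty\big(0,T;D(A^{1/8})\big)$ bound. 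Step~2 then repeats the argument with $\delta=7/8-\gamma$, $\rho=\gamma$, $\theta=1/8$: now the factor $\|A^{1/8}(v_n+z)\|$ is bounded in $L^\infty$ by Step~1, and $\|A^\gamma(v_n+z)\|$ is the unknown, giving a \emph{linear} integral inequality with the singular kernel $(t-s)^{-7/8}$; the modified Gr\"onwall lemma (Lemma~\ref{LEM:modifiedgronwall}) closes it. The crucial point you are missing is that the unknown must appear linearly on the right-hand side so that a Gr\"onwall argument applies; trying to absorb both factors of the bilinearity into known $L^p$ bounds is what makes the exponents unworkable.
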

 \begin{proof}
  \textit{Step $1$.} We start by estimating the $D(A^{\frac18})$-norm of the integral term in equation \eqref{EQ:v_nmild}, thanks to Lemma \ref{LEM:bB} with the choices $\delta=5/8, \rho=1/4, \theta=1/8$. We obtain, for a constant $C>0$ depending only on $T$ and on the choices of $\eps,G$
\begin{align*}
    J:=\ &\bigg\|A^{\frac18}\int_0^te^{-(t-s)A}B_n\big(v_n(s)+z(s)\big)\d s \bigg\|\\
    =\ &\bigg\|\int_0^tA^{\frac68}e^{-(t-s)A}A^{-\frac58}B_n\big(v_n(s)+z(s)\big)\d s \bigg\|\\
    \leq \ &\int_0^t\Big(\frac{3}{4e}\Big)^{3/4}(t-s)^{-3/4}c_0\big\|A^{\frac14}\big(v_n(s)+z(s)\big)\big\| \big\|A^{\frac18}\big(v_n(s)+z(s)\big)\big\|\d s\\
    \leq \ &C\int_0^t(t-s)^{-3/4}\Big(\|A^{\frac14}v_n(s)\|+1\Big)\Big(\|A^{\frac18}v_n(s)\|+1\Big)\d s ,
\end{align*}
where we used equation \eqref{EQ:Aaet1} and controlled uniformly in time $z$ thanks to Theorem \ref{TH:z}.
We now use the H\"older inequality with exponent $p>4$ and the respective $q=(1-1/p)^{-1}<4/3$. Constants $C>0$ hereafter may vary from line to line, yet they depend only on $\eps,G,T$.
 \begin{align*}
    J\leq \, &C\left[\int_0^t(t-s)^{-3q/4}\d s \right]^{1/q}\left[\int_0^t\Big(\|A^{\frac14}v_n(s)\|+1\Big)^p\Big(\|A^{\frac18}v_n(s)\|+1\Big)^p\d s \right]^{1/p}\\
    \leq \, &C\left[1+\|A^\eps x\|+\int_0^t\|A^{\frac18}v_n(s)\|^p\Big(\|A^{\frac14}v_n(s)\|^p+1\Big)\d s \right]^{1/p},
\end{align*}
where the first integral is finite because $-3q/4>-1$ and we controlled the $L^p(0,T;H)$-norm of $A^{\frac14}v_n$ thanks to Lemma \ref{LEM:atimaprioriferrario} (thus $p$ must be chosen such that $p<4/(1-2\eps)$). 
We now use this estimate into the mild formulation for $v_n$ (\textit{cf.} equation \eqref{EQ:v_nmild}) to control $\|A^{\frac18}v_n(t)\|$ $\mP-a.s.$ and for every $t\in[0,T]$: 
\begin{align*}
    \|A^{\frac18}v_n(t)\|^p
    &\leq 2^{p-1}\left(\|e^{-tA}\Pi_nA^{\frac18}x\|^p+J^p
    \right)\\
    &\leq C\bigg[1+\|A^{\frac14}x\|+\int_0^t\|A^{\frac18}v_n(s)\|^p
    \Big(\|A^{\frac14}v_n(s)\|^p+1\Big)\d s \bigg].
\end{align*}
The Gr\"onwall lemma applied $\mP-a.s.$ entails
\begin{align*}
\|A^{\frac18}v_n(t)\|^p&\leq C\big(1+\|A^{\frac14}x\|\big)\exp\bigg[c_0\int_0^{T}\|A^{\frac14}v_n(s)\|^p\d s\bigg].
\end{align*}
We can apply again Lemma \ref{LEM:atimaprioriferrario} and we obtain the uniform bound in $L^\infty\big(0,T;D(A^{\frac18})\big)$
\begin{equation}\label{EQ:estimate1/8}
\sup_{n\in\mN}\sup_{t\in[0,T]}\|A^{\frac18}v_n(t)\|\leq C\big(1+\|A^{\frac14}x\|\big)\exp\big[C\|A^{\frac14}x\|\big]\qquad \mP-a.s.
\end{equation}

 \textit{Step $2$.}  We now emulate the first step, but with the exponent $\gamma\in[1/4,1/4+ \eps )$ instead of $1/8$. We can apply Lemma \ref{LEM:bB} with the choices $\delta=7/8-\gamma, \rho=\gamma, \theta=1/8$ for every $t\in [0,T]$ and $\mP-a.s.$ to obtain
\begin{align*}
    \|A^{\gamma}v_n(t)\|&\leq \|e^{-tA}\Pi_nA^{\gamma}x\|+ \bigg\|A^{\gamma}\int_0^te^{-(t-s)A}B_n\big(v_n(s)+z(s)\big)\d s \bigg\|\\
    &\leq \|A^{\g} x\|+\bigg\|\int_0^tA^{\frac78}e^{-(t-s)A}A^{\gamma-\frac78}B_n\big(v_n(s)+z(s)\big)\d s \bigg\|\\
    &\leq \|A^{\g} x\|+ C\int_0^t(t-s)^{-7/8}\big\|A^{\gamma}\big(v_n(s)+z(s)\big)\big\| \ \big\|A^{\frac18}\big(v_n(s)+z(s)\big)\big\|\d s\\
    &\leq C\Big(1+\|A^{\g}x\|\exp\big[C\|A^{\g}x\|\big]\Big)\bigg[1+\int_0^t(t-s)^{-7/8} \|A^{\gamma}v_n(s)\|\d s\bigg],
\end{align*}
where in the last line we employed equation \eqref{EQ:estimate1/8} and Theorem \ref{TH:z}.
We resort now to the modified version of Gr\"onwall's lemma \ref{LEM:modifiedgronwall} to obtain
\begin{align*}
\sup_{t\in[0,T]}\|A^{\gamma}v_n(t)\|&\leq C\Big(1+\|A^{\g}x\|\exp\big[C\|A^{\g}x\|\big]\Big)\qquad \forall \, n\in\mN,\ \mP-a.s.
\end{align*}
 The assertion follows easily.
\end{proof}

\subsection{New path regularity}
We can use Lemma \ref{LEM:priola} to prove that the generalized solution to equation \eqref{EQ:NS} characterized by Theorem \ref{TH:knownresult} has trajectories with higher regularity in space. 

\begin{theorem}\label{TH:ulunardi}
    Let $\eps,G$ be as in hypothesis \eqref{HP:H1}. Let us take $\gamma\in[0,1/4+ \eps )$ and $x\in D(A^\g)$. Then the generalized solution $X^x$ to equation \eqref{EQ:NS} from Theorem \ref{TH:knownresult} has $\mP-a.s.$ paths in $C\big([0,T];D(A^\g)\big)$ for all $T>0$.
\end{theorem}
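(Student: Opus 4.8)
Proof proposal. The plan is to pass to the limit in the mild formulation \eqref{EQ:v_nmild} of the finite–dimensional approximations $v_n$, splitting
\[
v_n(t)=e^{-tA}x_n-w_n(t),\qquad w_n(t):=\int_0^te^{-(t-s)A}B_n\big(v_n(s)+z(s)\big)\,\d s ,
\]
and showing that each summand converges in $C\big([0,T];D(A^\g)\big)$ ($\mP$-a.s., for a fixed trajectory $z$ as in the previous subsections). Since $z\in C\big([0,T];D(A^\g)\big)$ by Theorem \ref{TH:z} (note $\g<1/4+\eps\le 1/2$) and $u=v+z$ with $v$ the limit of $v_n$ from Remark \ref{REM:v_ntov}, it suffices to prove $v\in C\big([0,T];D(A^\g)\big)$. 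For the linear term, $\Pi_n$ commutes with $A$ and $\{e^{-tA}\}$ is a strongly continuous semigroup on $H$, so $A^\g e^{-tA}x_n=e^{-tA}\Pi_nA^\g x\to e^{-tA}A^\g x$ in $C\big([0,T];H\big)$, because $\sup_{t\in[0,T]}\|e^{-tA}(\Pi_nA^\g x-A^\g x)\|\le\|\Pi_nA^\g x-A^\g x\|\to0$; that is, $e^{-\cdot A}x_n\to e^{-\cdot A}x$ in $C\big([0,T];D(A^\g)\big)$.

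The core of the argument is to show that $\{w_n\}_n$ is relatively compact in $C\big([0,T];D(A^\g)\big)$. Assume first $\g\in[1/4,1/4+\eps)$. By Lemma \ref{LEM:priola}, $\sup_n\sup_{t\in[0,T]}\|A^\g v_n(t)\|\le c_2$. Apply Lemma \ref{LEM:bB} with $\rho=\theta=\g$ and $\delta:=1-2\g\in(0,1/2]$ (admissible: $\rho+\theta+\delta=1$, $\rho+\delta=1-\g>1/2$, $\delta<1$): then $f_n(s):=A^{-\delta}B_n\big(v_n(s)+z(s)\big)$ satisfies $\|f_n(s)\|\le c_0\big(c_2+\sup_{[0,T]}\|A^\g z\|\big)^2=:M$ for all $s\in[0,T]$, uniformly in $n$. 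Writing $A^{\g+\theta'}w_n(t)=\int_0^tA^{1-\g+\theta'}e^{-(t-s)A}f_n(s)\,\d s$ and using \eqref{EQ:Aaet1}, we get $\|A^{\g+\theta'}w_n(t)\|\le C\,t^{\g-\theta'}$ for every $\theta'\in[0,\g)$; hence $\{w_n\}$ is bounded in $L^\infty\big(0,T;D(A^{\g+\theta'})\big)$ and, by the classical Hölder–in–time estimate for $f\mapsto\int_0^\cdot A^{1-\g}e^{-(\cdot-s)A}f(s)\,\d s$ on $L^\infty(0,T;H)$ (again a consequence of \eqref{EQ:Aaet1} via the usual semigroup splitting), $\{A^\g w_n\}$ is equicontinuous, indeed uniformly Hölder continuous into $H$. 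Since $D(A^{\g+\theta'})\hookrightarrow D(A^\g)$ compactly, the Arzelà-Ascoli theorem yields a subsequence with $w_n\to w$ in $C\big([0,T];D(A^\g)\big)$.

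To identify $w$, recall that $v_n\to v$ strongly in $L^2(0,T;H)$ (Remark \ref{REM:v_ntov}); passing to the limit in $v_n=e^{-\cdot A}x_n-w_n$ in $L^2(0,T;H)$ gives $v=e^{-\cdot A}x-w$ for a.e.\ $t$, hence for every $t$ since both sides are continuous into $H$ (Theorem \ref{TH:knownresult} for $v=u-z$). Thus $v\in C\big([0,T];D(A^\g)\big)$, with $v(0)=x-w(0)=x$, and $u=v+z\in C\big([0,T];D(A^\g)\big)$; the limit $w$, hence $v$, is independent of the subsequence by pathwise uniqueness, so the whole sequence converges. For $\g\in(0,1/4)$ the same scheme works verbatim once one has the uniform bound $\sup_n\sup_{t\in[0,T]}\|A^\g v_n(t)\|<\infty$ for $x\in D(A^\g)$ (the choices $\rho=\theta=\g$, $\delta=1-2\g$ are still admissible and $\g+\delta=1-\g<1$): if $\g<\eps$ this bound is precisely the intermediate estimate in the proof of Lemma \ref{LEM:atimaprioriferrario} (take $\alpha=\g$ there and use $\|A^\g x_n\|\le\|A^\g x\|$); if $\g\in[\eps,1/4)$ then $x\in D(A^\g)\subset D(A^\eps)$, so Lemma \ref{LEM:atimaprioriferrario} applies and $A^{1/4}v_n$ is bounded in $L^p(0,T;H)$ for some $p>4$, and Step $1$ of the proof of Lemma \ref{LEM:priola}, run with target exponent $\g$ in place of $1/8$ (the only change being $\|e^{-tA}\Pi_nA^\g x\|\le\|A^\g x\|$, which uses $x\in D(A^\g)$, and Lemma \ref{LEM:bB} with $\rho=1/4,\theta=\g,\delta=3/4-\g$), delivers the bound. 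The case $\g=0$ is already contained in Theorem \ref{TH:knownresult}.

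The main obstacle is exactly the uniform (in $n$ and in $t\in[0,T]$, down to $t=0$) a priori bound on $\|A^\g v_n(t)\|$ — Lemma \ref{LEM:priola} for $\g\ge1/4$ and its analogue below $1/4$. It is this bound that controls $B_n(v_n+z)$ in a negative fractional power space uniformly up to the initial time, which in turn lets the Arzelà-Ascoli compactness be run on the closed interval $[0,T]$; continuity at $t=0$ with the correct value $x$ then comes for free from the identification $v=e^{-\cdot A}x-w$. Everything else is routine manipulation of the analytic Stokes semigroup through \eqref{EQ:Aaet1}.
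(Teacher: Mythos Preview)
Your proposal is correct and follows essentially the same route as the paper: split $v_n=e^{-tA}x_n-w_n$, handle the linear term directly, and run Arzel\`a--Ascoli on the nonlinear piece $w_n$ using the uniform \emph{a priori} bound from Lemma~\ref{LEM:priola} together with the compact embedding $D(A^{\g+\theta'})\hookrightarrow D(A^\g)$; then identify the limit via the known convergence $v_n\to v$ from Remark~\ref{REM:v_ntov}.

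The differences are only in the technical bookkeeping. For equicontinuity you argue directly from the smoothing estimate \eqref{EQ:Aaet1} and the standard semigroup splitting, whereas the paper routes through Lunardi's interpolation spaces $D_A(\alpha,1)$ and \cite[Proposition~4.2.1]{Lunardi95Analyticsemigroups}; the content is the same, and your version is slightly more self-contained. Your choice $\rho=\theta=\g$, $\delta=1-2\g$ in Lemma~\ref{LEM:bB} differs from the paper's $\rho=\theta=1/4$, $\delta=1/2$, but both give a uniformly bounded forcing $f_n\in L^\infty(0,T;H)$ once Lemma~\ref{LEM:priola} is in hand. Finally, you treat the range $\g\in(0,1/4)$ explicitly by re-running the intermediate estimates inside the proofs of Lemmas~\ref{LEM:atimaprioriferrario} and~\ref{LEM:priola} with target exponent $\g$; the paper's proof as written applies Lemma~\ref{LEM:priola} (which is stated only for $\g\ge 1/4$) and glosses over how the bound on $\sup_t\|A^{1/4}v_n(t)\|$ is obtained when merely $x\in D(A^\g)$ with $\g<1/4$, so your version is in fact more careful on this point.
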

\begin{proof}\textit{Step $1.$} We take $\eps,G,\g,x,T$ as in the hypotheses and fix $\omega\in\Omega$ $\mP-a.s.$ We prove that a subsequence of $A^\g v_n$ (\textit{cf.} equation \eqref{EQ:v_nmild}) converges in $C\big([0,T];H\big)$. As for the term involving the initial datum, we directly have that $e^{\cdot A}x_n$ converges to $e^{-\cdot A}x$ in $C\big([0,T];D(A^\g)\big)$ as $n\to\infty$.

    As for the term with the non-linearity, we intend to apply the Arzelà-Ascoli theorem. 
    If we {rename $h_n:[0,T]\to H: t\mapsto \int_0^te^{-(t-s)A}B_n\big(v_n(s)+z(s)\big)\, \d s$},  we prove that $h_n$ is uniformly bounded in $C^{1/2-\g}\big([0,T];D(A^\g)\big)$, which gives equi-continuity.      In  \cite[Section $2.2.1$]{Lunardi95Analyticsemigroups}, the Banach space $D_A(\alpha,1)$ is introduced for any $\a\in(0,1)$\footnote{It consists of all $x\in H$ such that $s\mapsto s^{-\alpha}\|Ae^{-sA}x\|$ is in $L^1(0,1)$ and it is endowed with the complete norm $\|x\|_{D_A(\alpha,1)}:=\|x\|+\int_0^1s^{-\a}\|Ae^{-sA}x\|\d s$.}. The result \cite[Proposition $2.2.15$]{Lunardi95Analyticsemigroups} proves that $D_A(\alpha,1)$ is continuously embedded into $D(A^\a)$, which directly implies that for any $\b\in(0,1)$ the space of $\b$-H\"older continuous functions 
    $C^{\b}\big([0,T];D_A(\a,1)\big)$ is continuously embedded into $C^{\b}\big([0,T];D(A^\a)\big)$. We apply these results for $\alpha=1/2+\gamma\in[1/2,3/4+ \eps )\subset (0,1)$.
    \noindent Moreover, \cite[Proposition $4.2.1$]{Lunardi95Analyticsemigroups} gives 
    \[
    \|A^{-\frac12}h_n\|_{C^{1-\a}([0,T];D_A(\a,1))}\leq c\!\sup_{t\in[0,T]}\big\|A^{-\frac12}B_n\big(v_n(t)+z(t)\big)\big\|,
    \]
    for a constant $c>0$ that depends only on $\a$ and $T$. This considerations lead to the following estimates, where $C=C(\eps,\g,T)>0$ possibly varies from line to line
    \begin{align*}
    \|h_n\|_{C^{1/2-\g}([0,T];D(A^\g))}&=
    \|A^{-\frac12}h_n\|_{C^{1-\a}([0,T];D(A^\a))}\\
    &\leq C\|A^{-\frac12}h_n\|_{C^{1-\a}([0,T];D_A(\a,1))}\\
    &\leq C\sup_{t\in[0,T]}\big\|A^{-\frac12}B_n\big(v_n(t)+z(t)\big)\big\|\\
    &\leq C\Big[\|A^{\frac14}z\|^2_{C([0,T]:H)}+\sup_{t\in[0,T]}\|A^{\frac14}v_n(t)\|^2\Big]
    \end{align*}
    where we used almost surely Lemma \ref{LEM:bB} with $\delta=1/2$ and $\rho=\theta=1/4$. Finally Lemma \ref{LEM:priola} gives the uniform estimate in $n$.

    By arbitrariness of $\g$, we can consider $\g<\g'<1/4+ \eps $ and apply Lemma \ref{LEM:priola} with $\g'$ instead of $\g$. Since $D(A^{\g'})$ is compactly embedded into $D(A^\g)$, we have that $h_n(t)=e^{-tA}x_n-v_n(t)$, at any fixed $t\in[0,T]$, lies in a compact set of $D(A^\g)$.
    
    We thus apply the Arzelà-Ascoli theorem to the sequence $h_n$ and infer the existence of a sub-sequence  converging in $C\big([0,T];D(A^\g)\big)$ to a certain $h\in C\big([0,T];D(A^\g)\big)$. Let us define $\underline v:=e^{-\cdot A}x-h$, then a subsequence of $v_n$ converges to $\underline v$ in $C\big([0,T];D(A^\g)\big)$. We already discussed in Remark \ref{REM:v_ntov} that $v_n$ converges weakly* to $v$ in $L^\infty(0,T;H)$, thus $v=\underline v\in C\big([0,T];D(A^\g)\big)$.

    \textit{Step $2.$} 
We now define $u_n:=v_n+z$ for all $n\in\mN$ and $\mP-a.s.$ By the previous step we obtain uniform convergence in time for $u_n$ to the function $u:=v+z\in C\big([0,T];D(A^\g)\big)$. This function satisfies our definition of generalized solution (\textit{cf.} Definition \ref{DEF:gensol}). Indeed, by recalling the equation \eqref{EQ:defv} satisfied by $v$, we have for all $\phi\in D(A)$, for all times $t\geq 0$ and almost surely in probability
\begin{align*}\notag
     \langle u(t),\phi\rangle&=\langle v(t), \phi\rangle +\langle z(t), \phi\rangle\\&=-\int_0^t\langle v(s), A\phi\rangle\d s +\int_0^t b\big(v(s)+z(s), \phi, v(s)+z(s)\big) \d s + \langle x, \phi\rangle + \langle z(t), \phi\rangle\notag\\
     &=-\int_0^t\langle u(s), A\phi\rangle \d s + \int_0^t b\big(u(s), \phi, u(s)\big) \d s+\langle x, \phi\rangle +\langle z(t), \phi\rangle +\int_0^t\langle z(s), A\phi\d s \rangle\notag\\
     &=-\int_0^t\langle u(s), A\phi\rangle \d s + \int_0^t b\big(u(s), \phi, u(s)\big) \d s+\langle x, \phi\rangle +\langle GW_t,\phi \rangle,
 \end{align*}
where in the last line we employed Theorem \ref{TH:z}.    By the uniqueness result expressed in Theorem \ref{TH:knownresult}, it follows that the function $u$ we constructed by finite dimensional approximations coincides with the one in Theorem \ref{TH:knownresult}.
\end{proof}

\begin{remark}\label{REM:zdet}
It is worth noting that we only used so far the regularities for the trajectories $z$ of the stochastic convolution (\textit{cf.} Theorem \ref{TH:z}), reasoning at $\omega\in\Omega$ fixed almost surely. Therefore the theses in Theorem \ref{TH:ulunardi} still hold if we replace the stochastic convolution with a generic continuous deterministic function. More in detail: given $\gamma\in[0,1/2)$, $x\in D(A^\g)$ and  $z\in C\big([0,T];D(A^\g)\big)$ such that $z(0)=0$, the unique solution $v$ to equation \eqref{EQ:defv} has regularity $C\big([0,T];D(A^\g)\big)$.
\end{remark}

\begin{proposition}\label{PROP:t>0}
    Let $\eps,G$ be as in hypothesis \eqref{HP:H1}. Given $x\in H$, the generalized solution to equation \eqref{EQ:NS} from Theorem \ref{TH:knownresult} has $\mP-a.s.$ trajectories in $C\big([t_0,T];D(A^\g)\big)$ for all $0<t_0<T$ and all $\g\in[0,1/4+\eps)$.
\end{proposition}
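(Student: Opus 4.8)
The plan is to exploit the time-smoothing already contained in Theorem~\ref{TH:knownresult} so as to reduce the statement to the deterministic-forcing regularity result of Remark~\ref{REM:zdet} (the pathwise content of Theorem~\ref{TH:ulunardi}): at some positive time the solution enters $D(A^\g)$, and from that time one \emph{restarts} the equation. \textbf{Step 1 (a regular starting time).} Since $\eps\le 1/4$ we have $(1/4+\eps)\wedge\tfrac12=1/4+\eps$, so Theorem~\ref{TH:knownresult} gives $u\in L^2\big(0,T;D(A^{1/4+\eps})\big)$ $\mP$-a.s. Fix $\omega$ in the full-probability set on which this holds and on which $z$ has the regularities of Theorem~\ref{TH:z}. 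Then $\{t\in(0,T):u(t)\in D(A^{1/4+\eps})\}$ has full Lebesgue measure in $(0,T)$, so for the given $t_0$ one may choose $t_1=t_1(\omega)\in(0,t_0)$ with $u(t_1)\in D(A^{1/4+\eps})\subset D(A^\g)$ (recall $\g<1/4+\eps$). Since $t_1$ will only be used pathwise, its $\omega$-dependence is harmless.

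\textbf{Step 2 (restarting at $t_1$).} Write $u=v+z$ as in Remark~\ref{REM:v_ntov}. The cocycle identity for the Ornstein--Uhlenbeck process yields the additive splitting $z(t_1+s)=e^{-sA}z(t_1)+\hat z(s)$, where $\hat z(s):=\int_{t_1}^{t_1+s}e^{-(t_1+s-r)A}G\d W_r$ is the stochastic convolution of the shifted (again cylindrical) Wiener process $W_{t_1+\cdot}-W_{t_1}$; in particular $\hat z(0)=0$ and, by Theorem~\ref{TH:z} together with $\g<1/4+\eps$, $\hat z\in C\big([0,T-t_1];D(A^\g)\big)$. Put $\hat v(s):=v(t_1+s)+e^{-sA}z(t_1)$. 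Subtracting the weak identity \eqref{EQ:defv} at time $t_1$ from the one at time $t_1+s$, using the heat identity $\langle e^{-sA}z(t_1),\phi\rangle-\langle z(t_1),\phi\rangle=-\int_0^s\langle e^{-rA}z(t_1),A\phi\rangle\d r$ and the relation $u(t_1+s)=\hat v(s)+\hat z(s)$, one checks that $\hat v$ solves \eqref{EQ:defv} on $[0,T-t_1]$ with forcing $\hat z$ and initial datum $\hat v(0)=u(t_1)\in D(A^\g)$; since $\hat v$ inherits the base regularity $C\big([0,T-t_1];H\big)\cap L^2\big(0,T-t_1;V\big)$ from $v$ and from the smoothing of the semigroup, the pathwise uniqueness recalled in Remark~\ref{REM:v_ntov} identifies it as the unique such solution.

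\textbf{Step 3 (conclusion).} Remark~\ref{REM:zdet}, applied with exponent $\g\in[0,1/2)$, initial datum $u(t_1)\in D(A^\g)$ and continuous forcing $\hat z$ vanishing at $0$, gives $\hat v\in C\big([0,T-t_1];D(A^\g)\big)$. Since $z(t_1)\in D(A^\g)$, the orbit $s\mapsto e^{-sA}z(t_1)$ is continuous with values in $D(A^\g)$, hence $v(t_1+\cdot)=\hat v-e^{-\cdot A}z(t_1)\in C\big([0,T-t_1];D(A^\g)\big)$; adding $z\in C\big([0,T];D(A^\g)\big)$ (Theorem~\ref{TH:z}) we obtain $u\in C\big([t_1,T];D(A^\g)\big)\subset C\big([t_0,T];D(A^\g)\big)$. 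The full-probability set fixed in Step~1 does not depend on $t_0$ or $\g$, so this proves the assertion for all $0<t_0<T$ and all $\g\in[0,1/4+\eps)$ simultaneously.

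The main obstacle is Step~2: realizing that past a positive time the problem is again of the type treated in Remark~\ref{REM:zdet}, i.e.\ that the shifted remainder $\hat v$ solves a Navier--Stokes equation with \emph{regular} forcing that vanishes at the origin. The mechanism is the additive decomposition $z(t_1+s)=e^{-sA}z(t_1)+\hat z(s)$ of the noise: it is exactly what realigns the quadratic term $b(u,\cdot,u)$ and absorbs the (only $D(A^{1/4+\eps})$-regular) trace $z(t_1)$ into a sufficiently smooth initial datum, making the deterministic regularity theory applicable. (Alternatively, one could invoke the Markov property from Theorem~\ref{TH:knownresult} to restart at a deterministic $t_1$ with $u(t_1)\in D(A^\g)$ a.s.\ and apply Theorem~\ref{TH:ulunardi} with random initial datum via conditioning on $\mathcal F_{t_1}$ and the uniformity on bounded sets in Lemma~\ref{LEM:priola}; the pathwise route above avoids this.)
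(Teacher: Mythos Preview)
Your proof is correct and rests on the same idea as the paper's: the $L^2\big(0,T;D(A^{1/4+\eps})\big)$ regularity forces the trajectory into $D(A^\g)$ at some time $t_1<t_0$, after which one restarts and invokes the $C\big([0,T];D(A^\g)\big)$ regularity already established.

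The implementations differ. The paper works at the level of the \emph{stochastic} $u$-equation: it picks a \emph{deterministic} $t_1$ (via Fubini, since $u\in L^2(0,T;D(A^{1/4+\eps}))$ $\mP$-a.s.\ implies $u(t_1)\in D(A^\g)$ $\mP$-a.s.\ for a.e.\ $t_1$), solves \eqref{EQ:NS} afresh from $u(t_1)$ with the shifted noise, applies Theorem~\ref{TH:ulunardi} pathwise, glues the two pieces, and concludes by uniqueness of the generalized solution. This is exactly the ``Markov-property'' alternative you sketch in your closing parenthesis. Your main argument instead works at the level of the \emph{deterministic} $v$-equation: you allow an $\omega$-dependent $t_1$, use the cocycle splitting $z(t_1+s)=e^{-sA}z(t_1)+\hat z(s)$ to recast the shifted problem as one to which Remark~\ref{REM:zdet} applies, and then reassemble $u$. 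Your route is a bit longer but more self-contained pathwise, and it sidesteps the (minor) point of justifying Theorem~\ref{TH:ulunardi} for a random initial datum; the paper's route is shorter because the gluing and uniqueness at the $u$-level absorb the cocycle bookkeeping implicitly. One small comment: with $t_1$ random, the expression $\hat z(s)=\int_{t_1}^{t_1+s}e^{-(t_1+s-r)A}G\d W_r$ is best read as the pathwise identity $\hat z(s)=z(t_1+s)-e^{-sA}z(t_1)$, which is all you actually use and whose $C\big([0,T-t_1];D(A^\g)\big)$ regularity follows directly from Theorem~\ref{TH:z}.
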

\begin{proof}
    Let us take $x\in H$ and $\eps,G$ as in the hypotheses, then we know by Theorem \ref{TH:knownresult} that there exists a pathwise unique stochastic process $X^x$ with almost surely trajectories $u\in L^{2}\big(0,T;D(A^{\frac14+\eps})\big)$ for all $T>0$. We take $\g\in[0,1/4+\eps)$, thus $u(t)\in D(A^\g)$ for almost every $t>0$ and $\mP-a.s$. This allows us, once fixed $0<t_0<T$, to choose $t_1\in (0,t_0)$ such that $u(t_1)\in D(A^\g)$ $\mP-a.s.$, which can be chosen as a more regular starting point for equation \eqref{EQ:NS}. 
    Its unique generalized solution was proved to have paths in $C\big([0,T];D(A^\g)\big)$ almost surely: let us fix one of these trajectories, denoted by $w$, and its respective $\omega\in\Omega$.
We now define for that $\omega\in\Omega$
\begin{equation*}
    \tilde u(t)=
    \begin{cases}
        u(t)\qquad &\text{ if } t\in[0,t_1)\\
        w(t-t_1) &\text{ if } t\geq t_1
    \end{cases},
\end{equation*}
then $\tilde u$  satisfies the equation and the regularities in Definition \ref{DEF:gensol} for that $\omega\in\Omega$ and with starting point $x\in H$, therefore it must coincide with the trajectory $u$ of $X^x$ by the uniqueness result in Theorem \ref{TH:knownresult}. In particular we deduce that $t\mapsto u(t)=\tilde u(t)=w(t-t_1)$ is continuous and $D(A^\g)$-valued at every $t\geq t_1$, thus also $u\in C\big([t_0,T];D(A^\g)\big)$. We conclude by arbitrariness of $\omega\in\Omega$ $\mP-a.s.$
\end{proof}

\section{Application to invariant measure}\label{SEC:misura}
In this section we use the regularity result obtained in Section \ref{SEC:Mainresult} to prove the uniqueness  and the related ergodic properties of the invariant measure $\mu\in\mathcal P(H)$ provided by Theorem \ref{TH:munoto}. To this purpose we will also prove the strong Feller and irreducibility properties for the Markov semigroup $\{P_t\}_{t\geq 0}$ (see Section \ref{SEC:P_tmu}). We will always assume that the stochastic noise in equation \eqref{EQ:NS} is given by a cylindrical Wiener process $W$ in $H$ regularized by a linear operator $G$ that satisfies hypothesis \eqref{HP:H1}.
We adapt the reasoning from \cite{FlandoliMaslowski1995ergodicity,ferrario1997ergodic}, where however the uniqueness of the invariant measure is proved only under the stronger hypothesis \eqref{HP:H2}. 

We start by recalling two main properties for the Markov semigroup associated to equation \eqref{EQ:NS}, the first of which is classical and the second was introduced in \cite{FlandoliMaslowski1995ergodicity}. 
\begin{definition}\label{DEF:irredSF}
The Markov semigroup $\{P_t{\}}_{t\geq 0}$ introduced in Section \ref{SEC:P_tmu} 
\begin{itemize}
\item is irreducible on $D(A^\a)$, for some $\a\geq 0$, if for every time $t>0$, every point $x\in  D(A^\a) $ and all open non-empty sets $U\subset  D(A^\a) $, it holds $P_t^*\delta_x(U)=\mP(X^x_t\in U)>0$;
\item enjoys the \textit{(SF)} property on $ D(A^\a) \hookrightarrow H$, for some $\a>0$, if for every time $t>0$, every  function $\varphi\in\mathcal B_b\big( D(A^\a) \big)$ and every $x\in D(A^\a)$ we have
\[
\lim_{n\to\infty}P_t\varphi(x_n)=P_t\varphi(x)
\]
for every sequence $\{x_n\}_{n\in\mN}\subset D(A^\a)$ bounded in $D(A^\a)$ and converging to $x\in D(A^\a)$ with respect to the norm of $H$. 
\end{itemize}
\end{definition}
\begin{remark} \label{REM:irredSF}
    \begin{enumerate}[wide, label=$(\roman*)$]
    \item \label{IT:sigma} First of all, observe that the $\sigma$-algebra on $D(A^\a)$, $\a>0$,  generated by the norm $\|\cdot\|^{}_{D(A^\a)}=\|A^\a\cdot\|$ and denoted by $\mathscr{B}_{D(A^\a)}$ coincides with the one induced from $H$ and denoted by $\mathscr{B}_H \cap D(A^\a)$. 
    In particular, we have that all Borel subsets of $D(A^\a)$ are Borel subsets of $H$. A proof of this statement can be found in appendix (\textit{cf.} Lemma \ref{LEM:sigmainduced}).
    \item \label{IT:ISFcompare} A classical approach to proving uniqueness of the invariant measure for a Markov semigroup involves showing that the semigroup is both irreducible and strong Feller (\textit{cf.} \cite[Section $11.2$]{da2014stochastic}). This methodology was applied in \cite{ferrario1997ergodic} to our equation, with the stronger hypothesis \eqref{HP:H2}, to establish uniqueness of $\mu$ within the set of probability measures over $ \mathscr B_{D(A^{1/4})}$.\\    
    \noindent On the other hand, in \cite{FlandoliMaslowski1995ergodicity} the authors introduced two modified versions of irreducibility and strong Feller property, denoted respectively by \textit{(I)} and \textit{(SF)}, which granted uniqueness of the invariant measure in $\mathcal P(H)$ under the hypothesis \eqref{HP:H2}.\\    
    \noindent We have blended the two notions in Definition \ref{DEF:irredSF}. We will prove that they are sufficient, under the more general assumption \eqref{HP:H1} and thanks to the results in Section \ref{SEC:Mainresult}, to establish the uniqueness of the invariant measure within probabilities on $H$ and its concentration on a suitable Borel subset. 
    \item It is straightforward that the $\textit{(SF)}$ property on $D(A^\a)\hookrightarrow H$ is stronger then the usual strong Feller property on $D(A^\a)$, which would prevent us from obtaining uniqueness of the invariant measure over all probabilities on $H$.
    It is worth mentioning that the $\textit{(SF)}$ property on $D(A^\a)\hookrightarrow H$ is however weaker then requiring the continuity of $P_t\varphi$ with respect to the norm of $H$ for all $t>0$ and $\varphi\in \mathcal B_b\big(D(A^\a)\big)$. This would actually be sufficient for our goals, but we were not able to prove it.
    \end{enumerate}
\end{remark}

We first present a lemma, adapted from \cite[Theorem $4.1$]{FlandoliMaslowski1995ergodicity}, which is auxiliary for our main result. This lemma does not depend on the fact that the Markov semigroup generates from our equation \eqref{EQ:NS}, thus we state it in a general form as follows. The proof can be found in appendix.

\begin{lemma}\label{LEM:aux}
If a Markov semigroup $\{P_t\}_{t\geq 0}^{}$ on $H$ is both irreducible in $ D(A^\a) $ and enjoys the \textit{(SF)} property on $ D(A^\a) \hookrightarrow H$, for some $\a>0$, then the transition probabilities $P_t^*\delta_x$ for $t>0$ and $x\in  D(A^\a) $ are mutually equivalent on $\mathscr B_{ D(A^\a) }$.
\end{lemma}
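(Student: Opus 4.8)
The plan is to follow the classical Khasminskii–Doob–Khasminskii dichotomy argument (as in \cite[Section 11.2]{da2014stochastic}, adapted in \cite[Theorem 4.1]{FlandoliMaslowski1995ergodicity}), but carried out entirely within the Polish space $E:=D(A^\a)$, using Remark \ref{REM:irredSF} \itemref{IT:sigma} to pass between Borel structures when needed. Fix $t>0$ and $x,y\in E$; I want to show $P_t^*\delta_x\ll P_t^*\delta_y$ and, by symmetry, the reverse, hence mutual equivalence on $\mathscr B_E$. The two inputs are: \emph{(a)} irreducibility on $E$, which gives $P_s^*\delta_z(U)>0$ for every $s>0$, every $z\in E$ and every nonempty open $U\subset E$; and \emph{(b)} the \emph{(SF)} property on $E\hookrightarrow H$, which in particular implies that for fixed $s>0$ the map $z\mapsto P_s\varphi(z)$ is sequentially continuous on $E$ with respect to the $H$-norm (for bounded Borel $\varphi$ on $E$), hence a fortiori continuous for the stronger $E$-topology.

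The key steps, in order. First I would record the Chapman–Kolmogorov decomposition: for $0<s<t$ and $\Gamma\in\mathscr B_E$,
\[
P_t^*\delta_z(\Gamma)=\int_E P_{t-s}^*\delta_w(\Gamma)\,\big(P_s^*\delta_z\big)(\d w)
=\int_E P_{t-s}\mathds 1_\Gamma(w)\,\big(P_s^*\delta_z\big)(\d w);
\]
this is legitimate since $\{P_t\}$ is a Markov semigroup on $H$ and, by Remark \ref{REM:irredSF} \itemref{IT:sigma}, $\Gamma$ is also in $\mathscr B_H$, so the transition kernels act on it. Second, suppose $\Gamma\in\mathscr B_E$ satisfies $P_t^*\delta_y(\Gamma)=0$; I must show $P_t^*\delta_x(\Gamma)=0$. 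From the decomposition with $z=y$ and the fact that $w\mapsto P_{t-s}\mathds 1_\Gamma(w)\ge 0$, we get $P_{t-s}\mathds 1_\Gamma(w)=0$ for $(P_s^*\delta_y)$-a.e.\ $w\in E$. Third — this is the crucial point — I want to upgrade this ``a.e.'' statement to ``$P_{t-s}\mathds 1_\Gamma\equiv 0$ on $E$''. Here I combine: the function $g:=P_{t-s}\mathds 1_\Gamma:E\to[0,1]$ is continuous on $E$ (by \emph{(SF)}, since $\mathds 1_\Gamma\in\mathcal B_b(E)$); its zero set $Z:=\{g=0\}$ is therefore closed in $E$; and $P_s^*\delta_y(Z)=1$, so $Z$ cannot miss any nonempty open set — otherwise its open complement $E\setminus Z$ would be nonempty open with $P_s^*\delta_y(E\setminus Z)>0$ by irreducibility \emph{(a)}, contradicting $P_s^*\delta_y(Z)=1$. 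A closed set whose complement is empty is all of $E$, so $g\equiv 0$ on $E$. Fourth, plug this back into the decomposition with $z=x$: $P_t^*\delta_x(\Gamma)=\int_E g(w)\,(P_s^*\delta_x)(\d w)=0$. This proves $P_t^*\delta_x\ll P_t^*\delta_y$; swapping the roles of $x$ and $y$ gives the converse, hence mutual equivalence on $\mathscr B_E=\mathscr B_{D(A^\a)}$.

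The main obstacle is the third step, and specifically making sure the \emph{(SF)} property is being invoked correctly: Definition \ref{DEF:irredSF} only asserts sequential continuity of $z\mapsto P_{t-s}\varphi(z)$ along $E$-bounded sequences that converge in the $H$-norm. To conclude $Z$ is closed in $E$ I only need continuity of $g$ for the $E$-topology, which does follow: if $z_n\to z$ in $D(A^\a)$ then $\{z_n\}$ is $D(A^\a)$-bounded and converges in $H$, so $g(z_n)\to g(z)$; hence preimages of closed sets are closed. One subtle point worth a line in the write-up: the argument uses $s>0$ arbitrary in $(0,t)$, and $t-s>0$ as well, so both irreducibility (needs positive time) and \emph{(SF)} (needs positive time) apply; and $\Gamma$ being Borel in $E$ suffices throughout because $P_{t-s}\mathds 1_\Gamma$ is then a well-defined bounded Borel function on $H$ restricted to $E$. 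No integrability issue arises since $0\le g\le 1$. A final remark: the statement as phrased restricts to $x\in D(A^\a)$, so I do not need to worry about starting points merely in $H$; irreducibility and \emph{(SF)} are only assumed on $D(A^\a)$, and that is exactly the regime in which the argument runs.
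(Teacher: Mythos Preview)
There is a genuine gap in step~4 (and already in the displayed Chapman--Kolmogorov identity). The semigroup lives on $H$, so the correct decomposition is
\[
P_t^*\delta_z(\Gamma)=\int_{H} P_{t-s}\mathds 1_\Gamma(w)\,P_s^*\delta_z(\!\d w),
\]
with the integral over $H$, not over $E=D(A^\a)$. Your step~3 survives this correction: from $\int_H g\,\d P_s^*\delta_y=0$ you get $g:=P_{t-s}\mathds 1_\Gamma=0$ $P_s^*\delta_y$-a.e.\ on $H$; the set $\{w\in E:g(w)>0\}$ is open in $E$ by (SF), and if it were nonempty irreducibility would give it positive $P_s^*\delta_y$-mass, contradiction; hence $g\equiv 0$ on $E$. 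But step~4 does \emph{not} survive: to conclude $P_t^*\delta_x(\Gamma)=\int_H g\,\d P_s^*\delta_x=0$ you would need $P_s^*\delta_x(H\setminus E)=0$, i.e.\ that the semigroup started in $E$ stays in $E$. Lemma~\ref{LEM:aux} is stated for an \emph{arbitrary} Markov semigroup on $H$ satisfying only irreducibility on $E$ and (SF); no such concentration is assumed, so your argument does not close. (In the paper's application this concentration does hold via Proposition~\ref{PROP:t>0}, but it is invoked only \emph{after} Lemma~\ref{LEM:aux}, in Theorem~\ref{TH:ergo}.) A secondary issue: you only argue $P_t^*\delta_x\sim P_t^*\delta_y$ for a common time $t$, whereas the lemma asserts equivalence across \emph{all} $t>0$ and $x\in E$.

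The paper avoids the concentration obstacle by working with the contrapositive in its ``positive'' form: assuming $P_t^*\delta_{x_0}(U)>0$, it uses (SF) to produce a set $O:=B^H_\epsilon(x_0)\cap B^{D(A^\a)}_M$, open and nonempty in $E$, on which the relevant $P_\cdot\mathds 1_U$ is strictly positive, and then bounds
\[
P_s^*\delta_x(U)=\int_H P_\cdot\mathds 1_U(y)\,P_\cdot^*\delta_x(\!\d y)\ \ge\ \int_{O} P_\cdot\mathds 1_U(y)\,P_\cdot^*\delta_x(\!\d y)>0,
\]
the last inequality by irreducibility. The point is that a \emph{lower} bound only needs control of the integrand on a subset of $E$; the ambient integral being over $H$ is harmless. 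Your ``null-set'' route, by contrast, needs the integrand to vanish on the whole support of $P_s^*\delta_x$, and that support is not known to sit inside $E$. If you want to rescue your line of argument at the abstract level, you should switch to the positivity formulation; the (SF)-neighbourhood $B^H_\epsilon(x_0)\cap B^{D(A^\a)}_M$ is exactly what replaces the missing concentration hypothesis. The case distinction $s>t$ versus $s\le t$ in the paper then handles the ``different times'' part that your write-up omits.
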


We now state the main theorem of the section, which improves Theorem \ref{TH:munoto}.

\begin{theorem}\label{TH:ergo}
    Let $P=\{P_t\}_{t\geq 0}$ denote the Markov semigroup in $H$ associated to the solution of equation \eqref{EQ:NS} (\textit{cf.} Section \ref{SEC:P_tmu}) under the hypothesis \eqref{HP:H1}. Let $\mu$ be the invariant measure of Theorem \ref{TH:munoto}. If $P$ is both irreducible on $ D(A^{\frac14})$ and enjoys the \textit{(SF)} property on $ D(A^{\frac14})\hookrightarrow H$, then 
    \begin{enumerate}[label=$(\roman*)$]
        \item\label{IT:uni} $\mu$ is unique in the set $\mathcal P(H)$ and is concentrated on $ D(A^{\g})$ for all $\g<1/4+ \eps $,
        \item\label{IT:erg} $\mu$ is ergodic, \textit{i.e.} for all $\varphi\in\mathcal B_b(H)$, it holds
        \[
        \ds\lim_{T\to\infty}\frac1T\int_0^TP_t\varphi\d t = \int_H\varphi\d \mu,
        \]
        \item\label{IT:equi} $\mu$ is equivalent to $P_t^*\delta_x$ for all $t>0$ and $x\in H$,
        \item\label{IT:point} for all $U\in\mathscr B_H$ and $x\in H$ it holds $\ds\lim_{t\to+\infty}P_t^*\delta_x(U)=\mu(U)$,
        \item\label{IT:TV} $\mu$ is strongly mixing, \textit{i.e.} for { every $\nu\in\mathcal P(H)$} it holds $\ds \lim_{t\to+\infty}\|P_t^*\nu-\mu\|_{TV}=0$.
    \end{enumerate}
\end{theorem}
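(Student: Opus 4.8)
The plan is to follow the classical Doob--Khas'minskii argument for uniqueness and ergodicity of the invariant measure, but carried out on the Borel space $D(A^{1/4})$ rather than $H$, then transfer everything back to $H$ using the regularization provided by Section~\ref{SEC:Mainresult}.

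\textbf{Step 1: uniqueness and concentration.} First I would recall from Theorem~\ref{TH:munoto} that under \eqref{HP:H1} (which implies \eqref{HP:H0}) there exists at least one invariant measure $\mu$. The key point is that, by Proposition~\ref{PROP:t>0}, for any $x\in H$ the solution satisfies $X^x_t\in D(A^\g)$ $\mP$-a.s.\ for every $t>0$ and every $\g<1/4+\eps$; hence for any $t>0$ one has $P_t^*\delta_x\big(D(A^\g)\big)=1$, and integrating against any invariant measure $\mu$ gives $\mu\big(D(A^\g)\big)=\int_H P_t^*\delta_x(D(A^\g))\,\mu(\!\d x)=1$. This proves the concentration claim in \itemref{IT:uni} (using Remark~\ref{REM:irredSF}\itemref{IT:sigma} to ensure $D(A^\g)$ is a Borel subset of $H$). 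For uniqueness: suppose $\mu_1,\mu_2$ are two invariant probability measures on $H$; both are concentrated on $D(A^{1/4})$, so they may be regarded as probability measures on $\mathscr B_{D(A^{1/4})}$, invariant for the restricted semigroup. By Lemma~\ref{LEM:aux}, irreducibility on $D(A^{1/4})$ together with the \textit{(SF)} property on $D(A^{1/4})\hookrightarrow H$ give mutual equivalence of the transition kernels $P_t^*\delta_x$ on $\mathscr B_{D(A^{1/4})}$ for $x\in D(A^{1/4})$, $t>0$. The standard argument (e.g.\ \cite[Theorem~4.2.1]{da2014stochastic} or \cite[Section~11.2]{da2014stochastic}) then yields that two invariant measures sharing this property must coincide; I would spell out that an invariant measure is necessarily equivalent to each $P_t^*\delta_x$ and invoke the classical Khas'minskii--Doob dichotomy to conclude $\mu_1=\mu_2$.

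\textbf{Step 2: ergodicity and convergence.} With uniqueness in hand, \itemref{IT:erg} is the classical von Neumann--type statement: a unique invariant measure for a Markov semigroup is automatically ergodic, so the Cesàro averages $\frac1T\int_0^T P_t\varphi\,\d t$ converge to $\int_H\varphi\,\d\mu$ in $L^2(\mu)$ (and the pointwise/everywhere version follows from \itemref{IT:point} via bounded convergence), cf.\ \cite{da2014stochastic,seidler1997ergodic}. For \itemref{IT:equi}: by Lemma~\ref{LEM:aux} all $P_t^*\delta_x$, $x\in D(A^{1/4})$, $t>0$, are mutually equivalent on $\mathscr B_{D(A^{1/4})}$, and $\mu$, being invariant and concentrated on $D(A^{1/4})$, equals $\int P_{t}^*\delta_y\,\mu(\!\d y)$, hence is equivalent to each of them; for a general $x\in H$ one uses that $P_t^*\delta_x=\int P_{t-s}^*\delta_y\,(P_s^*\delta_x)(\!\d y)$ with $P_s^*\delta_x$ concentrated on $D(A^{1/4})$ for $s>0$, reducing to the previous case. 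Finally \itemref{IT:point} and \itemref{IT:TV}: by the general theory (Doob's theorem, cf.\ \cite[Theorem~4.2.1]{da2014stochastic}), a Markov semigroup admitting a unique invariant measure to which every transition probability is equivalent is strongly mixing, i.e.\ $P_t^*\delta_x\to\mu$ in total variation for every $x$; integrating $\nu(\!\d x)$ and using dominated convergence (the total variation distance is bounded by $2$) upgrades this to $\|P_t^*\nu-\mu\|_{TV}\to0$ for arbitrary $\nu\in\mathcal P(H)$, and \itemref{IT:point} is the special case tested on a single Borel set $U$.

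\textbf{Main obstacle.} The genuinely delicate point is not any single item above — once uniqueness plus equivalence of transition probabilities is established, \itemref{IT:erg}--\itemref{IT:TV} are essentially bookkeeping with classical theorems — but rather making the reduction from $H$ to $D(A^{1/4})$ completely rigorous: one must check that the restricted semigroup $\{P_t|_{D(A^{1/4})}\}$ is genuinely a Markov semigroup on the Polish space $D(A^{1/4})$ (using Proposition~\ref{PROP:t>0} so that $X^x$ never leaves $D(A^{1/4})$ for $t>0$), that measurability is preserved under the identification of $\sigma$-algebras from Remark~\ref{REM:irredSF}\itemref{IT:sigma}, and that an $H$-invariant measure restricts to a $D(A^{1/4})$-invariant one and conversely extends back. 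The other subtlety is handling starting points $x\in H\setminus D(A^{1/4})$ throughout: every equivalence/convergence statement for such $x$ must be obtained by first running the flow for a short time $s>0$ to land in $D(A^{1/4})$ via Proposition~\ref{PROP:t>0}, then applying the Chapman--Kolmogorov identity. I would isolate these two reductions as the first lemmas of the proof so that the remaining arguments become verbatim the classical ones.
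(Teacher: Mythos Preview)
Your proposal is correct and follows essentially the same route as the paper: apply Lemma~\ref{LEM:aux} for mutual equivalence of the $P_t^*\delta_x$ on $\mathscr B_{D(A^{1/4})}$, then use Proposition~\ref{PROP:t>0} together with Chapman--Kolmogorov to extend the equivalence to all starting points $x\in H$ on $\mathscr B_H$, and finally invoke Doob's theorem (and Seidler for \itemref{IT:TV}). The paper organizes things marginally differently --- it proves the full mutual equivalence on $\mathscr B_H$ first and derives concentration from \itemref{IT:equi} rather than directly from Proposition~\ref{PROP:t>0} as you do --- but the ingredients and the ``main obstacle'' you isolate (the passage between $H$ and $D(A^{1/4})$ via the short-time regularization) are exactly the ones the paper works through.
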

\begin{proof}
\textit{Step $1.$} We first apply the auxiliary Lemma \ref{LEM:aux} with the choice $\a=1/4$ to obtain that the transition probabilities $P_t^*\delta_x$ for $t>0$ and $x\in  D(A^{\frac14})$ are mutually equivalent on 
$\mathscr B_{ D(A^{{1/4}}) }$.

\textit{Step $2.$} We now prove the equivalence on $\mathscr B_{ H^{}}$ of the transition probabilities $P_t^*\delta_x$ for $t>0$ and $x\in  H^{}$. This reasoning is inspired by \cite[Lemma $4.1$]{FlandoliMaslowski1995ergodicity}, which however required the stronger assumption \eqref{HP:H2}.

    Let $t,s>0$, $x_0,x\in H$ and $U\in\mathscr B_H$ and let us suppose that $P_t^*\delta_{x_0}(U)>0$, then the thesis is proved if we show that $P_s^*\delta_x(U)>0$. We use the Chapman-Kolmogorov equation with intermediate time $h\in (0,s\wedge t )$:
    \begin{align}\label{EQ:lemmamutuallyequivalent}
        0<P_t^*\delta_{x_0}(U)=&\int_HP_{t-h}\mi_U(y)\, P_h^*\delta_{x_0}(\!\d y)\notag\\
        =&\int_{ D(A^{1/4})} P_{t-h}\mi_U(y) \,P_h^*\delta_{x_0}(\!\d y)+\int_{H\setminus  D(A^{1/4})}P_{t-h}\mi_U(y) \,P_h^*\delta_{x_0}(\!\d y)\\
        =&\int_{ D(A^{1/4})}P_{t-h}\mi_U(y) \,P_h^*\delta_{x_0}(\!\d y).\notag
    \end{align}
    The last equality holds because we know by Proposition \ref{PROP:t>0} that $X^{x_0}_h\in  D(A^{\frac14})$ $\mP$-a.s., thus
    \[
    0\leq \int_{H\setminus  D(A^{1/4})}P_{t-h}\mi_U(y)\, P_h^*\delta_{x_0}(\!\d y )\leq P_h^*\delta_{x_0}\big(H\setminus  D(A^{\frac14})\big)= \mP\big(X^{x_0}_h\in H\setminus  D(A^{\frac14})\big)=0.
    \]
    We deduce by the strict inequality in equation \eqref{EQ:lemmamutuallyequivalent} that there exists $y_0\in  D(A^{\frac14})$ such that 
    \begin{align*}
     0<P_{t-h}\mi_U(y_0)=\mP\big(X_{t-h}^{y_0}\in U\cap  D(A^{\frac14})\big)+\mP\big(X_{t-h}^{y_0}\in U\setminus D(A^{\frac14})\big)
     & =P_{t-h}^*\delta_{y_0}\big(U\cap  D(A^{\frac14})\big),
    \end{align*}
    where we used that $X^{y_0}_{t-h}\in  D(A^{\frac14})$ almost surely.
    We now resort to the hypothesis on the equivalence of transition probabilities over $\mathscr B_{D(A^{1/4})}=\mathscr B_H\cap  D(A^{\frac14})$ to infer that 
    \begin{equation}\label{EQ:lemmaeq2}
    P_{s-h}^*\delta_y\big(U\cap D(A^{\frac14})\big)>0\qquad \forall \,y\in  D(A^{\frac14}).
    \end{equation}
    Therefore, by the fact that $X^x_s, X^x_h\in  D(A^{\frac14})$ almost surely and the Chapman-Kolmogorov equation, we conclude that 
    \begin{align*}
        P_s^*\delta_x(U)&=\mP\big(X_{s}^{x}\in U\cap  D(A^{\frac14})\big)+
        \mP\big(X_{s}^{x}\in U\setminus  D(A^{\frac14})\big)\\
         &=P_s^*\delta_x\big(U\cap  D(A^{\frac14})\big)\\
         &=\int_H P_{s-h}\mi_{U\cap  D(A^{1/4})}(y)\,P_h^*\delta_x(\!\d y )\\
         &=\int_{ D(A^{1/4})} P_{s-h}\mi_{U\cap  D(A^{1/4})}(y)\, P_h^*\delta_x(\!\d y)+\int_{H\setminus  D(A^{1/4})} P_{s-h}\mi_{U\cap  D(A^{1/4})}(y)\, P_h^*\delta_x(\!\d y)\\
         &=\int_{ D(A^{1/4})} P_{s-h}\mi_{U\cap  D(A^{1/4})}(y)\, P_h^*\delta_x(\!\d y)>0.
    \end{align*}
    If indeed the last strict inequality were false than we we would have $0=P_{s-h}\mi_{U\cap  D(A^{1/4})}(y)$ for almost every $y\in  D(A^{\frac14})$, which contradicts equation \eqref{EQ:lemmaeq2}.

    \textit{Step $3.$} Thanks to  the previous step, Doob's Theorem implies \itemref{IT:erg}, \itemref{IT:equi}, \itemref{IT:point} and the uniqueness part in \itemref{IT:uni} (\textit{cf.} \cite[Theorem $11.14$]{da2014stochastic}). 
    Concentration of $\mu$ in $ D(A^{\g})$ for $\g>0$ close to $1/4+ \eps $ follows by \itemref{IT:equi}, thanks to Proposition \ref{PROP:t>0}. Part \itemref{IT:TV} is due to Seidler (\textit{cf.} \cite[Proposition $2.5$]{seidler1997ergodic}).
\end{proof}

In what follows, it is proved that the Markov semigroup associated with the solution of equation \eqref{EQ:NS} (see Section \ref{SEC:P_tmu}) does indeed satisfy the hypotheses of Theorem \ref{TH:ergo}, thus reaching all the uniqueness and ergodic properties there listed.

\subsection{Irreducibility}
If $E$ is a normed vector space, we denote with $C_0\big([0,T];E\big)$ the closed subspace of $C\big([0,T];E\big)$, with the induced norm, of all functions $z$ such that $z(0)=0$. 
For every $x\in H$ and continuous $z:[0,+\infty)\to H$ with $z(0)=0$, we study the deterministic abstract equation in $H$ given in system \eqref{EQ:systemv}. Its solution is intended in the generalized sense, namely as a function $v\in C\big([0,T];H\big)\cap L^2(0,T;V)$ for all $T>0$ that satisfies equation \eqref{EQ:defv} in the deterministic frame.

We adapt the proof of \cite[Lemma $5.3$]{FlandoliMaslowski1995ergodicity} thanks to the regularity we obtained in Theorem \ref{TH:mainresult}.
\begin{lemma}\label{LEM:Phi}
For any $\g\in(1/4,1/2)$, $x\in D(A^\g)$ and every $T>0$ the following map $\Phi=\Phi^{x,T,\g}$ is well-defined
    \begin{align*}
    \Phi:C_0\big([0,T];D(A^\g)\big)\to C\big([0,T]; D(A^{\frac14})\big):    z\mapsto v+z,
    \end{align*}
    where $v$ is the unique solution in $C\big([0,T];H\big)\cap L^2(0,T;V)$ to system \eqref{EQ:systemv} in the generalized sense expressed by equation \eqref{EQ:defv}. Moreover:
    \begin{enumerate}[label=(\alph*)]
    \item\label{IT:Phicontinuous}  The map $\Phi$ is continuous with respect to the assigned topologies.
    \item\label{IT:datixyesistebarz} For every $y\in D(A^\g)$  there exists $ \bar z\in C_0\big([0,T];D(A^\g)\big)$ such that $\Phi( \bar z)(T)=y$.
    \end{enumerate}
\end{lemma}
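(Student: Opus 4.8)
The plan is to prove the two assertions of the lemma by combining the deterministic regularity result (Remark~\ref{REM:zdet}) with the continuous dependence of the generalized solution $v$ on the data $(x,z)$, and then to establish the approximate controllability statement \itemref{IT:datixyesistebarz} by an explicit choice of $\bar z$ that forces $v+z$ to hit the prescribed target $y$ at time $T$.

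First I would check that $\Phi$ is well-defined. Given $z\in C_0\big([0,T];D(A^\g)\big)$ and $x\in D(A^\g)$ with $\g\in(1/4,1/2)$, equation \eqref{EQ:systemv} has a unique generalized solution $v\in C\big([0,T];H\big)\cap L^2(0,T;V)$ by Remark~\ref{REM:v_ntov}; Remark~\ref{REM:zdet} upgrades this to $v\in C\big([0,T];D(A^\g)\big)$, hence $v+z\in C\big([0,T];D(A^\g)\big)\hookrightarrow C\big([0,T];D(A^{\frac14})\big)$, so $\Phi$ maps into the stated space. For part~\itemref{IT:Phicontinuous}, I would take a sequence $z_k\to z$ in $C_0\big([0,T];D(A^\g)\big)$, write the equations satisfied by the differences $w_k=v_k-v$ (where $v_k$ solves \eqref{EQ:systemv} with datum $z_k$), test with an appropriate power of $A$ applied to $w_k$, and estimate the nonlinear terms using the bilinear bounds of Lemma~\ref{LEM:bB} together with the uniform bounds on $v_k,v$ coming from the a priori estimates (the analogues of Lemma~\ref{LEM:atimaprioriferrario} and Lemma~\ref{LEM:priola} in the deterministic setting). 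A Gr\"onwall argument then gives $\sup_{[0,T]}\|A^{1/4}w_k\|\to 0$, i.e.\ $v_k+z_k\to v+z$ in $C\big([0,T];D(A^{\frac14})\big)$; the key point making the nonlinear estimate close is that we only need convergence in the $D(A^{1/4})$-topology on the target side, while the data live in the strictly better space $D(A^\g)$, $\g>1/4$.

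For part~\itemref{IT:datixyesistebarz}, the idea is the standard ``steering'' trick: fix $y\in D(A^\g)$ and look for $\bar z$ of the form that is identically zero on $[0,T/2]$ and then, on $[T/2,T]$, is chosen so that the solution $v$ of \eqref{EQ:systemv} is forced to be smooth and to arrive at the value $y-\bar z(T)$; concretely, pick a smooth curve $\xi:[T/2,T]\to D(A^\g)$ with $\xi(T/2)=v(T/2)$ (the value reached by freely evolving with $z\equiv 0$ on the first half-interval) and $\xi(T)=y$, and set $\bar z(t):=\xi(t)-v_{\mathrm{free}}(t)$-type correction so that plugging $v:=\xi-\bar z$ into \eqref{EQ:defv} determines $\bar z$ explicitly via $\bar z(t)=\int_{T/2}^t\big(\xi'(s)+A\xi(s)+B(\xi(s))\big)\d s$ on $[T/2,T]$, extended by $0$ before $T/2$; one then verifies that this $\bar z$ indeed lies in $C_0\big([0,T];D(A^\g)\big)$ (this is where one needs $\xi$ smooth enough, e.g.\ $\xi(s)\in D(A)$, and uses $B:D(A^{\g})\to D(A^\g)'$... more precisely one arranges $\xi$ to take values in a sufficiently regular subspace so that $A\xi$ and $B(\xi)$ make sense in $D(A^\g)$), that $v=\xi-\bar z$ solves \eqref{EQ:defv} with datum $\bar z$, and that $\Phi(\bar z)(T)=v(T)+\bar z(T)=\xi(T)=y$. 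Continuity of $\bar z$ at $t=T/2$ requires the corrective integral to vanish there, which it does by construction, and $\bar z(0)=0$ is immediate.

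The main obstacle I expect is part~\itemref{IT:datixyesistebarz}: one must build the steering path $\xi$ with enough regularity that the formula for $\bar z$ yields a genuine element of $C_0\big([0,T];D(A^\g)\big)$ — in particular $A\xi$ must be controllable in the $D(A^\g)$-norm, which forces $\xi$ to take values in $D(A^{1+\g})$ or at least in $D(A)$, and one must check that such a path connecting $v(T/2)$ (which a priori only lies in $D(A^\g)$) to $y\in D(A^\g)$ can be chosen; the trick is to first let the free solution regularize (so that $v(T/2)$ is actually much smoother than $D(A^\g)$, e.g.\ in $D(A)$, by the smoothing in Lemma~\ref{LEM:priola}/Proposition~\ref{PROP:t>0} applied to the deterministic equation) and to connect within a dense regular subspace. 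The continuity part~\itemref{IT:Phicontinuous} is comparatively routine, being a Gr\"onwall estimate on the difference equation, with the only care needed in handling the bilinear term $B(v_k+z_k)-B(v+z)$ via the splitting into a ``$w_k$'' contribution absorbed by Gr\"onwall and a ``$z_k-z$'' contribution controlled by the data convergence.
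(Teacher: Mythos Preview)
Your approach to part~(a) is close to the paper's; note however that the paper tests the difference equation against $Y:=v_0-v$ itself, obtains $\sup_t\|Y(t)\|\le C\|z_0-z\|_{C([0,T];D(A^\gamma))}$ in the $H$-norm via Gr\"onwall, and only then interpolates $\|A^{1/4}Y\|\le\|Y\|^\lambda\|A^\gamma Y\|^{1-\lambda}$ using the uniform $D(A^\gamma)$ bound from Remark~\ref{REM:zdet}. This sidesteps any need to control $\|A^{3/4}Y\|$, which a direct Gr\"onwall argument in $D(A^{1/4})$ would require.

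Part~(b) has a genuine gap. First, your formula $\bar z(t)=\int_{T/2}^t(\xi'+A\xi+B(\xi))\d s$ is inconsistent with $\Phi(\bar z)=\xi$: substituting $v=\xi-\bar z$ into \eqref{EQ:systemv} gives $\bar z'+A\bar z=\xi'+A\xi+B(\xi)$, so $\bar z$ is determined by a linear parabolic equation, not by that explicit integral. More importantly, the regularity obstacle you correctly flag is not resolved by ``connecting within a dense regular subspace'': the target $y$ is prescribed in $D(A^\gamma)$ only, so no path $\xi$ with $\xi(T)=y$ can remain in $D(A^{1+\gamma})$ (or even $D(A)$) up to the endpoint; smoothing the starting value $v(T/2)$ does nothing for the terminal constraint. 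The paper's construction circumvents this by building the target trajectory $\bar u$ first, as $e^{-tA}x$ on $[0,t_0]$, $e^{-(T-t)A}y$ on $[t_1,T]$, and linear interpolation in between. The backward-semigroup piece hits $y$ exactly at $t=T$ while having just enough time-integrability ($\bar u\in C([0,T];D(A^\gamma))\cap L^4(0,T;V)$) that $B(\bar u)\in L^2\big(0,T;D(A^{\frac12-\gamma})'\big)$. One then defines $\bar v$ as the solution of the \emph{linear} problem $\bar v'+A\bar v=-B(\bar u)$, $\bar v(0)=x$, and standard parabolic regularity yields $\bar v\in C([0,T];D(A^\gamma))$ from this dual-space forcing---no pointwise bound on $A\bar u$ is ever needed. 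Setting $\bar z:=\bar u-\bar v$ gives $\bar z\in C_0([0,T];D(A^\gamma))$ with $\Phi(\bar z)(T)=\bar u(T)=y$. The passage through the linear parabolic solver, together with the semigroup-based choice of $\bar u$, is the missing idea.
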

\begin{proof} For fixed $\g\in(1/4,1/2)$, $x\in D(A^\g)$ and $T>0$, the well-posedness of $\Phi$ follows from Remarks \ref{REM:v_ntov} and \ref{REM:zdet}. 
As far as part \ref{IT:Phicontinuous} is concerned, we take an arbitrary $z_0\in C_0\big([0,T];D(A^\g)\big)$ and prove continuity of $\Phi$ at $z_0$. We choose $z$ close to $z_0$
and we show that $\Phi(z)$ is close to $\Phi(z_0)$, in the respective norms.

We denote $u_0=\Phi(z_0), v_0=\Phi(z_0)-z_0$ and $u=\Phi(z), v=\Phi(z)-z$. We know by Remark \ref{REM:zdet} that $u_0,v_0,u,v\in C\big([0,T];D(A^\g)\big)$, which guarantees that $B(u_0),B(u)\in C\big([0,T];V'\big)$ by Lemma \ref{LEM:bB}. We introduce for brevity $Z=z_0- z\in C_0\big([0,T];D(A^\g)\big)$, $Y=v_0- v\in L^2(0,T;V)\cap C\big([0,T];D(A^\g)\big)$ and $U=Y+Z=u_0-u\in C\big([0,T];D(A^\g)\big)$. Then $Y$ satisfies for all $\phi\in D(A)$ and $t\geq 0$
\begin{equation*}
\begin{split}
    \langle Y(t), \phi\rangle +\int_0^t\langle Y(s), A\phi\rangle \d s = \int_0^t b\big(u_0(s), \phi,u_0(s)\big) \d s-\int_0^t b\big(u(s), \phi,u(s)\big) \d s,
    \end{split}
\end{equation*}
which implies, by the regularity of the terms involved
\begin{equation}\label{EQ:Vphi}
\begin{split}
    \prescript{}{ V'}{\langle} Y(t), \phi{\rangle}_V +\int_0^t\prescript{}{ V'}{\langle} AY(s), \phi{\rangle}_V \d s = \int_0^t \prescript{}{ V'}{\big\langle}B(u(s)),\phi{\big\rangle}_V\d s-\int_0^t \prescript{}{ V'}{\big\langle}B(u_0(s)),\phi{\big\rangle}_V\d s.
    \end{split}
\end{equation}
By density of $D(A)$ in $V$ and continuity of the duality pairing, equation \eqref{EQ:Vphi} can be written for all $\phi\in V$, which implies by a standard result in functional analysis (\textit{cf.} \cite[Chapter $3$, Lemma $1.1$]{temam2001navier}) that $Y\in H^1(0,T;V')$ and 
\[
Y'(t)+AY(t)=B(u(t))-B(u_0(t))\qquad a.e.\, t>0,
\]
the equality holding in $V'$. We take $a.e.$ in time the pairing in $V'/V$ with $Y$ to obtain
\begin{equation}  \label{EQ:Vphi2}
    \prescript{}{ V'}{\langle} Y'(t), Y(t){\rangle}_{ V}+
    \prescript{}{ V'}{\langle} AY(t), Y(t){\rangle}_{ V}=
    \prescript{}{ V'}{\big\langle} B(u(t))-B(u_0(t)), Y(t){\big\rangle}_{ V}\qquad a.e.\  t>0.
\end{equation}
We recall that (see \cite[Chapter $3$, Lemma $1.2$]{temam2001navier})
\[
\int_0^t\prescript{}{ V'}{\langle} Y'(s), Y(s){\rangle}_{ V}\d s =\frac12\|Y(t)\|^2\qquad \forall \, t\geq 0
\]
and that $\prescript{}{ V'}{\langle} AY(t), Y(t){\rangle}_{ V}=\|Y(t)\|^2_V$ for almost every $t>0$.
Moreover almost everywhere in time it holds (see Lemma \ref{LEM:bB})
\begin{equation*}
    \prescript{}{ V'}{\big\langle} B(u)-B(u_0), Y{\big\rangle}_{ V}= b(u_0,Y,Y) 
    - b(u_0,Z,Y)
    -b(U,u,Y).
\end{equation*}
The three terms in the right-hand side can be controlled as follows for some $\epsilon>0$, thanks to the uniform bounds in time on $u,u_0$
\begin{align*}
\big|b(u_0,Y,Y)\big|
&\leq c_0\|A^{\frac14}u_0\|\|A^{\frac14}Y\|\|Y\|_{V}^{}
\leq C\|Y\|^{1/2}\|Y\|^{1/2}_V\|Y\|_V^{}
\leq \epsilon\|Y\|_V^2+C_\epsilon\|Y\|^2\\
\big|b(u_0,Z,Y)\big| 
&\leq c_0\|A^{\frac14}u_0\|\|A^{\frac14}Z\|\|Y\|_V^{}
\leq \epsilon\|Y\|_{V}^{2}+C_\epsilon\|A^{\g}Z\|^2\\
\big|b(U,u,Y)\big| 
&\leq c_0\|A^{\frac14}(Y+Z)\|\|A^{\frac14}u\|\|Y\|_V^{}
\leq C\big(\|Y\|_{V}^{1/2}\|Y\|^{1/2}_V+\|A^{\frac14}Z\|\big)\|Y\|_{V}^{}\\
&\leq \epsilon\|Y\|_{V}^{2}+C_\epsilon||Y||^2+C_\epsilon\|A^{\g}Z\|^2.
\end{align*}

Therefore if we integrate equation \eqref{EQ:Vphi2} on the time interval $[0,t]$, we obtain for all times
\begin{gather*}
\frac12\|Y(t)\|^2+(1-3\epsilon)\int_0^t\|Y(s)\|^2_V\d s  
\leq C_\epsilon
\bigg[\|Z\|^2_{C([0,T];D(A^\g))}+\int_0^t\|Y(s)\|^2\d s\bigg].
\end{gather*}
After choosing in the last equation $1-3\epsilon>0$, Gr\"onwall's lemma entails
\begin{align*}
\|Y(t)\| \leq  C\|Z\|_{C([0,T];D(A^\g))}.
\end{align*}
By means of the interpolation inequality (\textit{cf.} Lemma \ref{LEM:sobolevinterpolation}) one obtains, for $\lambda:=1-\frac{1}{4\g}$
\[
\|A^{\frac14}Y(t)\|\leq \|Y(t)\|^\la\, \|A^{\g}Y(t)\|^{1-\la}\leq C\|Z\|^\la_{C([0,T];D(A^\g))}.
\]
We finally proved continuity at $z_0$ for $\Phi$:
\begin{gather*}
\|\Phi(z)-\Phi( z_0)\|_{C([0,T]; D(A^{1/4}))}=\sup_{t\in[0,T]}\|A^{\frac14}Y(t)+A^{\frac14}Z(t)\|\\
\leq C\Big(\|z- z_0\|^\lambda_{C([0,T];D(A^\g))}+\|z- z_0\|^{}_{C([0,T];D(A^\g))}\Big).
\end{gather*}

    As far as part \itemref{IT:datixyesistebarz} is concerned, we follow the approach by \cite{ferrario1997ergodic}. For fixed $T>0$, starting point $x\in D(A^\g)$ and terminal point $y\in D(A^\g)$ we construct the following function $\bar u$, by choosing $0<t_0<t_1<T$
    \begin{equation*}
    \bar u (t)=
        \begin{cases}
            e^{-tA}x &\text{ if }t\in [0,t_0]\\
            e^{-(T-t)A}y&\text{ if }t\in [t_1,T]\\
            \ds \bar u(t_0)+\frac{t-t_0}{t_1-t_0}\big(\bar u (t_1)-\bar u (t_0)\big) \qquad &\text{ if }t\in (t_0,t_1)
        \end{cases}.
    \end{equation*}
    By direct inspection, thanks to equation \eqref{EQ:Aaet1} (with $\alpha=1/2-\gamma$) and the bound $\gamma>1/4$,  we have $\bar u\in C\big([0,T];D(A^\gamma)\big)\cap L^4(0,T;D(A^{\frac12})\big)$. This in turn implies, by Lemma \ref{LEM:bB} (choosing $\delta=1/2-\gamma$ and $\rho=\theta=1/2$) and the bound $\gamma<1/2$, that $B(\bar u)\in L^2\big(0,T;D(A^{\frac12-\gamma})'\big)$.
    Define now $\bar v$ as the weak solution of the following linear differential equation, set in $D(A^\g)$
    \begin{equation*}
        \begin{cases}
            \bar v'+A\bar v=-B(\bar u)\\
            \bar v(0)=x
        \end{cases}.
    \end{equation*}
    We know by a standard result in linear parabolic PDEs (\textit{cf.} \cite[Section $11.1.2$]{renardy2004introduction}) that there exists a unique $\bar v\in H^1\big([0,T];D(A^{\frac12-\gamma})'\big)\cap L^2\big(0,T;D(A^{\frac12+\g})\big)\subset C\big([0,T];D(A^\g)\big)$ such that $\bar v(0)=x$ and 
    \begin{equation*} 
    {\llangle} \bar v'(t), \phi{\rrangle}+
    {\llangle} A\bar v (t),\phi{\rrangle}=
    -{\big\langle\!\big\langle} B(\bar u (t)), \phi{\big\rangle\!\big\rangle}\qquad \forall \,\phi\in  D(A^{\frac12-\g}) , \ a.e.\  t>0,
\end{equation*}
where we denote by $\llangle\,\cdot\,,\,\cdot\,\rrangle$ the duality pairing between $D(A^{\frac12-\g})'$ and $D(A^{\frac12-\g})$.
By limiting test functions to $\phi\in D(A)\subset  D(A^{\frac12-\g}) $ and integrating over $[0,t]\subset[0,T]$, we reach
\begin{align}\label{EQ:vbar}
    \langle \bar v(t), \phi\rangle +\int_0^t\langle \bar v(s), A\phi\rangle \d s 
    = \int_0^t b\big(\bar u(s),\phi, \bar u(s)\big) \d s+\langle x, \phi\rangle\qquad \forall \, \phi\in D(A), \ \forall \, t\in[0,T].
\end{align}
Then we conclude that $\bar z:=\bar u -\bar v$ satisfies the thesis of the theorem. Indeed equation \eqref{EQ:vbar} tells us that $\bar v$ satisfies in the generalized sense the equation
    \begin{equation*}
        \begin{cases}
            v'+Av+B(v+\bar z)=0\\
            v(0)=x
        \end{cases},
    \end{equation*} 
    which is known to have a unique generalized solution as discussed at the beginning of the proof. Therefore we have $\Phi(\bar z)=\bar z+\bar v=\bar u$, which entails the thesis.
\end{proof}

\begin{lemma}\label{TH:zmisurapiena}
    Let $\eps,G$ be as in hypothesis \eqref{HP:H1}. For all $\g\in[0,1/4+ \eps )$, the law of $Z$ as a random variable\footnote{The stochastic process $\{Z_t\}_{t\in[0,T]}$ can be seen as a random variable $\Omega\to C_0\big([0,T];D(A^\g)\big)$ thanks to \cite[Proposition $3.18$]{da2014stochastic} and our Theorem \ref{TH:z}.} in $C_0\big([0,T];D(A^\g)\big)$ is full, \textit{i.e.} $\mP(Z\in U)>0$ for every open non-empty subset $U$ of $C_0\big([0,T];D(A^\g)\big)$.
\end{lemma}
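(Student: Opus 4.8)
The law of $Z$ on the separable Banach space $E:=C_0\big([0,T];D(A^\g)\big)$ is a centered Gaussian measure $\mu_Z$: its finite--dimensional distributions are Gaussian and, by Theorem \ref{TH:z}, it admits a version with continuous $D(A^\g)$--valued paths vanishing at $t=0$. The plan is to exhibit a \emph{dense} subset of $E$ every point of which carries positive $\mu_Z$--mass in each ball around it; this immediately yields the statement, since a nonempty open $U\subset E$ then meets that subset and contains a ball centered at the meeting point.

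For $\phi\in L^2(0,T;H)$ set $h_\phi(t):=\int_0^te^{-(t-s)A}G\phi(s)\,\d s$; using $\Ran(G)\subset D(A^{1/4+\eps})$ (so that $A^{1/4+\eps}G\in\mathcal L(H)$ by the closed graph theorem) together with $\g<1/4+\eps$ one checks $h_\phi\in E$. I claim $\mP\big(\|Z-h_\phi\|_E<\delta\big)>0$ for all $\delta>0$, via a Cameron--Martin/Girsanov shift. Since $\phi$ is deterministic and in $L^2(0,T;H)$, the formula $\tfrac{\d\mathbb{Q}}{\d\mP}=\exp\big(\int_0^T\langle\phi(s),\d W_s\rangle-\tfrac12\int_0^T\|\phi(s)\|^2\d s\big)$ defines a probability $\mathbb{Q}\sim\mP$ under which $\widetilde W_t:=W_t-\int_0^t\phi(s)\,\d s$ is a cylindrical Wiener process; substituting $\d W=\d\widetilde W+\phi\,\d t$ into the stochastic convolution gives the pathwise identity $Z_t=\widetilde Z_t+h_\phi(t)$, where $\widetilde Z$ is the stochastic convolution driven by $\widetilde W$. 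Since $\widetilde Z$ under $\mathbb{Q}$ has the same $E$--law as $Z$ under $\mP$ (both are the same pathwise--defined continuous functional of a cylindrical Wiener process),
\[
\mP\big(\|Z-h_\phi\|_E<\delta\big)=\mE^{\mathbb{Q}}\Big[\tfrac{\d\mP}{\d\mathbb{Q}}\,\mi_{\{\|\widetilde Z\|_E<\delta\}}\Big]>0,
\]
because $\tfrac{\d\mP}{\d\mathbb{Q}}>0$ $\mathbb{Q}$--a.s. and $\mathbb{Q}\big(\|\widetilde Z\|_E<\delta\big)=\mP\big(\|Z\|_E<\delta\big)>0$, the last inequality holding since the origin lies in the support of the centered Gaussian measure $\mu_Z$.

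It remains to prove that $\{h_\phi:\phi\in L^2(0,T;H)\}$ is dense in $E$; this is the only point where the full hypothesis \eqref{HP:H1}, namely $V\subset\Ran(G)$, is used. By the closed graph theorem the linear map $G^{-1}\colon V\to H$ is bounded: if $v_n\to v$ in $V$ and $G^{-1}v_n\to w$ in $H$, then $v_n=G(G^{-1}v_n)\to Gw$ in $H$, whence $Gw=v$. Hence for any $f\in C\big([0,T];V\big)$ one has $\phi:=G^{-1}f\in C\big([0,T];H\big)\subset L^2(0,T;H)$, and $h_\phi(t)=\int_0^te^{-(t-s)A}f(s)\,\d s$ is exactly the mild solution of $\dot y=-Ay+f$, $y(0)=0$. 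Given $g\in E$, I would approximate it in $E$ by $g_m$ smooth in time and valued in a finite--dimensional $H_{N_m}$, with $g_m(0)=0$: truncate the Fourier expansion of $g$ (the truncation converges uniformly in $t$ because $\{A^\g g(t):t\in[0,T]\}$ is compact in $H$), extend it by $0$ to $t<0$, and convolve in time with a kernel supported in $[0,\eps_m]$ so that the value at $t=0$ stays $0$. Each $g_m$ lies in $C^1\big([0,T];D(A^\ell)\big)$ for every $\ell\ge0$, so $f_m:=\dot g_m+Ag_m\in C\big([0,T];V\big)$ and $g_m=h_{\phi_m}$ with $\phi_m=G^{-1}f_m$. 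Thus $\{h_\phi\}$ is dense in $E$, and the lemma follows: given nonempty open $U\subset E$, choose $g_0\in U$ and $\delta>0$ with $\{g:\|g-g_0\|_E<\delta\}\subset U$, then $\phi$ with $\|h_\phi-g_0\|_E<\delta/2$, so that $\mP(Z\in U)\ge\mP\big(\|Z-h_\phi\|_E<\delta/2\big)>0$.

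The step with genuine content is the density of $\{h_\phi\}$: everything reduces to representing a dense family of target paths as $\int_0^\cdot e^{-(\cdot-s)A}G\phi(s)\,\d s$ with $\phi\in L^2$, which is precisely why one needs $V\subset\Ran(G)$ rather than merely $\Ran(G)\subset D(A^{1/4+\eps})$ (the noise must act in enough directions for irreducibility); the time--mollification bookkeeping that preserves $g_m(0)=0$ while keeping $f_m\in C([0,T];V)$ is routine but should be carried out with care. Alternatively, one may invoke the general theorem that the support of a centered Gaussian measure on a separable Banach space is the closure of its Cameron--Martin space, identify the latter for the stochastic convolution with the closure of $\{h_\phi\}$, and reduce everything again to the same density assertion.
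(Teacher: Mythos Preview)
Your proof is correct. The paper takes the shorter route you mention at the very end: it invokes \cite[Lemma~2.6]{maslowski1993probability} directly, after checking its hypotheses $(A1)$ (path continuity of $Z$ in $D(A^\g)$, i.e.\ Theorem~\ref{TH:z}) and $(X2)$ (via \cite[Proposition~2.7]{maslowski1993probability}, using that $G\colon H\to\Ran(G)$ is a bounded isomorphism and that $\Ran(G)$ is densely and continuously embedded in $D(A^\g)$), concluding that the support of the law of $Z$ equals all of $C_0\big([0,T];D(A^\g)\big)$. Your hands-on argument unpacks precisely what that cited lemma encodes: the Girsanov shift yields positivity of every ball centered at an $h_\phi$, and the density of $\{h_\phi\}$ (finite-dimensional spectral truncation followed by a one-sided time mollification preserving the zero initial value, so that $f_m=\dot g_m+Ag_m$ stays in $C\big([0,T];V\big)$) does the rest. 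Both approaches use the inclusion $V\subset\Ran(G)$ in the same essential way---the paper to obtain density of $\Ran(G)$ in $D(A^\g)$, you to invert $G$ on $V$ and thereby represent smooth targets as mild solutions driven by some $G\phi$. Your version is self-contained and makes the mechanism transparent; the paper's is more economical by delegating the Gaussian-support machinery to Maslowski.
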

\begin{proof}
    This lemma is a direct application of \cite[Lemma $2.6$]{maslowski1993probability} once checked that its hypotheses (denoted by $(A1)$ and $(X2)$) are satisfied in our case. Adopting the notations used in the cited reference, we choose $D(A^\g)$ as the space $E$, so that $\hat E=C_0\big([0,T];D(A^\g)\big)$. Moreover $Q^0$ is the law of the random variable $Z:\Omega\to C_0\big([0,T];D(A^\g)\big)$ and condition $(A1)$ (about $Z$ having a continuous version in $E$) is satisfied thanks to our Theorem \ref{TH:z}.  Eventually the technical condition $(X2)$ is met thanks to \cite[Proposition $2.7$]{maslowski1993probability}, where $Q^{1/2}=G$ is a linear bounded and invertible operator from $H$ to $\Ran(G)$, which is densely and continuously embedded into $D(A^{\g})$.
    The hypotheses of \cite[Lemma $2.6$]{maslowski1993probability} are thus satisfied and the thesis states that the closure of $\operatorname{supp}(Q^0)$ in the topology of $\hat E$ coincides with $\hat E$. 
    Let now $U$ be an open non-empty subset of $C_0\big([0,T]; D(A^{\g})\big)=\hat E$, then the intersection between $U$ and $S:=\operatorname{supp}(Q^0)$ is non-empty\footnote{If it were empty, we would reach a contradiction: 
    $U\cap S=\varnothing \Longleftrightarrow S\subset U^c
        \Longrightarrow\hat E=\overline{ S}\subset \overline {U^c}=U^c
        \Longleftrightarrow U\subset {\hat E}^c=\varnothing.$}.
   Eventually by definition of support of a probability distribution\footnote{The support of a probability distribution $Q^0$ on a topological space is the largest Borel set $S$ with the following property: if $U$ is an open set with non-empty intersection with $S$, then $Q^0(U\cap S)>0$.} we have     $     \mP(Z\in U)=Q^0(U)\geq Q^0(U\cap {S})>0$.
\end{proof}
\begin{proposition}
    Let $\eps,G$ be as in hypothesis \eqref{HP:H1}. The Markov semigroup $\{P_t\}^{}_{t\geq 0}$ (\textit{cf.} Section \ref{SEC:P_tmu}) is irreducible in $ D(A^{\frac14})$.
\end{proposition}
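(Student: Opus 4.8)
The plan is to combine three ingredients that are already available: the approximate controllability of the deterministic equation (Lemma~\ref{LEM:Phi}), the full support of the Ornstein--Uhlenbeck process in the relevant path space (Lemma~\ref{TH:zmisurapiena}), and the instantaneous smoothing of the solution (Proposition~\ref{PROP:t>0}). We have to show that $\mP(X^x_t\in U)>0$ whenever $t>0$, $x\in D(A^{\frac14})$ and $U\subset D(A^{\frac14})$ is open and non-empty. Throughout, I would fix an exponent $\g\in\big(1/4,1/4+\eps\big)$; since $\eps\le 1/4$ this also forces $\g<1/2$, so that the map $\Phi^{\cdot,\cdot,\g}$ of Lemma~\ref{LEM:Phi} is defined, and by Remark~\ref{REM:irredSF}~\ref{IT:sigma} the set $U$ is a Borel subset of $H$, so that all the probabilities below make sense.

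First I would reduce to regular initial data. Using that $X^x$ is a Markov process (Theorem~\ref{TH:knownresult}) and the Chapman--Kolmogorov identity with intermediate time $t/2$, one writes $\mP(X^x_t\in U)=\int_H P_{t/2}\mi_U(y)\,P^*_{t/2}\delta_x(\!\d y)$. By Proposition~\ref{PROP:t>0} the trajectory of $X^x$ lies in the Borel set $D(A^\g)$ at every positive time almost surely, so $P^*_{t/2}\delta_x$ is a probability measure carried by $D(A^\g)$ and the integral equals $\int_{D(A^\g)}P_{t/2}\mi_U(y)\,P^*_{t/2}\delta_x(\!\d y)$. Hence it suffices to prove that $\mP(X^y_s\in U)>0$ for every $s>0$ and every regular datum $y\in D(A^\g)$: the displayed integral will then be positive, being the integral of a strictly positive function against a probability measure.

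To prove this last claim I would run a control/support argument. Fix $s>0$ and $y\in D(A^\g)$. Since $D(A^\g)$ is dense in $D(A^{\frac14})$ and $U$ is open and non-empty there, pick $\widehat y\in U\cap D(A^\g)$ and apply Lemma~\ref{LEM:Phi}~\ref{IT:datixyesistebarz} (with starting point $y$, terminal time $s$ and terminal value $\widehat y$) to produce $\bar z\in C_0\big([0,s];D(A^\g)\big)$ with $\Phi^{y,s,\g}(\bar z)(s)=\widehat y$. The set
\[
\mathcal N:=\Big\{z\in C_0\big([0,s];D(A^\g)\big)\ :\ \Phi^{y,s,\g}(z)(s)\in U\Big\}
\]
is then open---it is the preimage of the open set $U$ under the composition of the continuous map $\Phi^{y,s,\g}$ (Lemma~\ref{LEM:Phi}~\ref{IT:Phicontinuous}) with the continuous evaluation $w\mapsto w(s)$---and it is non-empty, since it contains $\bar z$. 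On the other hand, the pathwise construction in Theorem~\ref{TH:ulunardi} identifies the generalized solution with $X^y=\Phi^{y,s,\g}(Z)$, where, by Theorem~\ref{TH:z}, the Ornstein--Uhlenbeck process $Z$ has almost all trajectories in $C_0\big([0,s];D(A^\g)\big)$. Consequently $\{Z\in\mathcal N\}\subset\{X^y_s\in U\}$, and Lemma~\ref{TH:zmisurapiena} gives $\mP(Z\in\mathcal N)>0$, whence $\mP(X^y_s\in U)>0$. Together with the previous paragraph this yields the proposition.

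The routine parts of this are the measurability bookkeeping (entirely covered by Remark~\ref{REM:irredSF}), the density of $D(A^\g)$ in $D(A^{\frac14})$, and the continuity statements used to define $\mathcal N$. The step I expect to require the most care is the reduction to regular data: the control map $\Phi$ of Lemma~\ref{LEM:Phi} is only available when the initial datum lies in $D(A^\g)$ with $\g$ \emph{strictly} larger than $1/4$, whereas irreducibility on $D(A^{\frac14})$ must be established starting from an arbitrary point of $D(A^{\frac14})$; the instantaneous smoothing provided by Proposition~\ref{PROP:t>0}, combined with the Markov property, is precisely what bridges this regularity gap. Once that is in place, the argument is just the assembly of the two substantial ingredients, Lemmas~\ref{LEM:Phi} and~\ref{TH:zmisurapiena}.
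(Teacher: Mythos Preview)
Your argument is correct and follows the same core strategy as the paper: combine the controllability/continuity of the map $\Phi$ from Lemma~\ref{LEM:Phi} with the full support of the Ornstein--Uhlenbeck process from Lemma~\ref{TH:zmisurapiena}, and identify $X^y$ with $\Phi(Z)$. The one structural difference is your preliminary regularization step: you use Proposition~\ref{PROP:t>0} together with the Markov property and Chapman--Kolmogorov to replace the initial datum $x\in D(A^{\frac14})$ by some $y\in D(A^\g)$ with $\g>1/4$, and only then invoke $\Phi^{y,s,\g}$. The paper instead applies $\Phi$ directly with starting point $x\in D(A^{\frac14})$; this is a slight imprecision, since Lemma~\ref{LEM:Phi} is stated only for $x\in D(A^\g)$ with $\g\in(1/4,1/2)$. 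Your extra step cleanly closes this gap at the cost of a short Chapman--Kolmogorov computation, while the paper's version is shorter but tacitly assumes the conclusions of Lemma~\ref{LEM:Phi} extend to $x\in D(A^{\frac14})$.
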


\begin{proof}
 Let us fix $T>0$, $x\in  D(A^{\frac14})$ and $U$ open non-empty set in $ D(A^{\frac14})$, we prove that $P_T^*\delta_x(U)>0$. Fix $\eps$ and $G$ as in the hypothesis and take $\g\in(1/4,1/4+ \eps )$. By density of $D(A^\g)$ in $ D(A^{\frac14})$, we find $y\in D(A^\g)$ and $\delta>0$ such that the open ball $B^{ D(A^{1/4})}_\delta(y):=\{w\in  D(A^{\frac14}) : \|w-y\|^{}_{ D(A^{1/4})}<\delta\}$ is included in $U$. For that $y\in D(A^\g)$, part \itemref{IT:datixyesistebarz} of Lemma \ref{LEM:Phi} returns a $\bar z\in C_0\big([0,T];D(A^\g)\big)$ such that $\Phi(\bar z)(T)=y$ and $\mP-a.s.$
 \begin{align*}
 \|u(T)-y\|^{}_{ D(A^{1/4})}=\|\Phi(Z)(T)-\Phi(\bar z)(T)\|^{}_{ D(A^{1/4})}
 \leq \|\Phi(Z)-\Phi(\bar z)\|_{C([0,T]; D(A^{1/4}))}.
 \end{align*}
For the above $\delta>0$, continuity of $\Phi$ at $\bar z$ (see part \itemref{IT:Phicontinuous} of Lemma \ref{LEM:Phi}) returns $\rho>0$ such that, whenever $\|Z-\bar z\|^{}_{C([0,T];D(A^\g))}<\rho $, then $\|\Phi(Z)-\Phi(\bar z)\|_{C([0,T]; D(A^{1/4}))}<\delta$. We conclude by monotonicity of probability and Lemma \ref{TH:zmisurapiena}
\[
P_T^*\delta_x(U)\geq \mP\big(\|u(T)-y\|^{}_{ D(A^{1/4})}<\delta\big)\geq \mP\big(\|Z-\bar z\|^{}_{C([0,T];D(A^\g))}<\rho \big)>0.
\]
\end{proof}
\begin{remark}
Irreducibility in $ D(A^{\frac14})$ implies irreducibility in $H$ for our semigroup. Let indeed $U$ be an open set in $H$, then $U\cap  D(A^{\frac14})$ is open in $ D(A^{\frac14})$, as discussed in Remark \ref{REM:irredSF} \ref{IT:sigma}. Moreover, if $x\in H$, then we proved in Proposition \ref{PROP:t>0} that for all $t>0$ we have $X^x_{t}\in  D(A^{\frac14})$ almost surely. Therefore, given irreducibility in $ D(A^{\frac14})$, $x\in H$ and $t>0$, we find $t_0\in (0,t)$ such that $X^x_{t_0}=x_0\in  D(A^{\frac14})$ almost surely, thus  $$P_t^*\delta_x(U)\geq P_t^*\delta_x\big(U\cap  D(A^{\frac14})\big)=P_{t-t_0}^*\delta_{x_0}\big(U\cap  D(A^{\frac14})\big)>0.$$
\end{remark}
\subsection{\textit{(SF)} property}

As far as the \textit{(SF)} property is concerned, we proceed by finite dimensional approximations of a suitably modified version of our 2D N-S stochastic equation \eqref{EQ:NS}, similarly to \cite{FlandoliMaslowski1995ergodicity,ferrario1997ergodic}.

 For any $n\in\mN$ let $\Pi_n$ be the projector onto the subspace of $H$ generated by the eigenvectors  $e_k$ of $A$ for $k\in\{1,\dots,n\}$. 
 For every $R\in(0,+\infty)$ let $\Theta_R:[0,+\infty)\to[0,1]$ be a smooth function with compact support $[0,R+1]$ and equal to $1$ on the interval $[0,R]$.
 
 We study the following modified and approximated version of the stochastic 2D N-S equations, in which the function $B$ is smoothly truncated to $0$ whenever the norm in $ D(A^{\frac14})$ of the solution exceeds the parameter $R$. For every positive integer $n\in\mN$ we denote with $W^n:=\Pi_nW=\sum_{k=1}^n w^ke_k$ a finite dimensional standard Brownian motion in $H_n=\Pi_nH$, then for every real parameter $R>0$, every injective $G\in\mathcal L(H)$ and every $x\in H_n$ we consider 
 \begin{equation}\label{EQ:NSmodifiedapproximated}
     \begin{cases}
         \d X^{R,n}_t+AX^{R,n}_t\d t +\Theta_R\big(\|A^{\frac14}X^{R,n}_t\|^{2}\big)B_n\big(X^{R,n}_t\big)\d t = \Pi_nG\d W^n_t\qquad t>0,\, \mP-a.s.\\
         X^{R,n}_0=x\qquad \mP-a.s.
     \end{cases}
 \end{equation}
 This is an autonomous finite dimensional stochastic differential equation, with an additive noise and a drift which is the sum of a linear term and a globally Lipschitz non-linearity. Therefore, according to the standard existence and uniqueness theorem for solutions to SDEs under regular and Lipschitz coefficients (refer to, for example, \cite[Theorems $9.2$, $9.6$]{baldi2017stochastic}), there exists a pathwise unique Markov process $X^{R,n}$, which is a solution to the modified and approximated system \eqref{EQ:NSmodifiedapproximated}. When we want to highlight the dependence on the initial datum, we use the notation $X^{R,n}(x)$. Let $P^{R,n}$ denote the Markov semigroup associated with this solution. 
 It is known by classic theory (\textit{cf.} \cite[Chapter $1$]{cerrai2001second}) that the function $H_n\to H_n: x\mapsto X^{R,n}_s(x)$ is mean-square differentiable, therefore, by means of the Bismut-Elworthy formula (\textit{cf.} \cite{elworthy1994formulae, bismut1981martingales}), we know that $P_t^{R,n}\varphi$ is differentiable at all times $t> 0$ {for all $\varphi\in  C_b(H_n)$} and that 
 \begin{align}\label{EQ:Bismut}
  \nabla P^{R,n}_t\varphi(x)\boldsymbol\cdot h&=\frac{1}{t}\mE\left[\varphi\big(X^{R,n}_t(x)\big)\int_0^t(\Pi_nGG^*\Pi_n)^{-1/2}D_x\big[X^{R,n}_s(x)\big]h \boldsymbol\cdot  \d W^n_s \right]\qquad \forall \, x,h\in H_n.
 \end{align}
 We denoted with $D_x[X^{R,n}_s(x)]h$ the row-by-column multiplication between the Jacobian matrix evaluated at $x$ of the function $H_n\to H_n: x\mapsto X^{R,n}_s(x)$ and the vector $h\in H_n$. 

 \begin{lemma}\label{LEM:PRn}
     For any $G$ as in hypothesis \eqref{HP:H1} and for all  $R,t>0$ there exists a constant $C_R(t)>0$ such that, for all $n\in\mN$
  \begin{equation*}
\big|P_t^{R,n}\varphi(x)-P_t^{R,n}\varphi(y)|\leq C_R(t)\|\varphi\|_{\infty}\|x-y\|\qquad \forall \, \varphi\in C_b(H_n), \ \forall \,x,y\in H_n.
 \end{equation*}
 \end{lemma}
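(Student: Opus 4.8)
The plan is to run the Bismut--Elworthy argument of \eqref{EQ:Bismut} and reduce the Lipschitz bound to two uniform-in-$n$ estimates. Since the truncated drift in \eqref{EQ:NSmodifiedapproximated} is smooth with globally bounded derivative, $P^{R,n}_t\varphi\in C^1(H_n)$, so for $x,y\in H_n$
\[
P^{R,n}_t\varphi(x)-P^{R,n}_t\varphi(y)=-\int_0^1\nabla P^{R,n}_t\varphi\big(y+\theta(x-y)\big)\cdot(x-y)\,\d\theta ,
\]
and it suffices to bound $|\nabla P^{R,n}_t\varphi(z)\cdot h|$ for $z,h\in H_n$. Writing $\eta_s:=D_x[X^{R,n}_s(z)]h$ and applying \eqref{EQ:Bismut} together with the Cauchy--Schwarz inequality in $\Omega$ and the It\^o isometry,
\[
\big|\nabla P^{R,n}_t\varphi(z)\cdot h\big|\le \frac{\|\varphi\|_\infty}{t}\bigg(\int_0^t\mE\big\|(\Pi_nGG^*\Pi_n)^{-1/2}\eta_s\big\|^2\,\d s\bigg)^{1/2}.
\]
Thus the lemma follows once the right-hand integral is shown to be at most $K_R(t)\,\|h\|^2$ with $K_R(t)$ independent of $n$.

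The first ingredient is a uniform bound for the (genuinely $n$-unbounded) operator $(\Pi_nGG^*\Pi_n)^{-1/2}$ applied to regular vectors. It is well defined on $H_n$ because $G^*$ is injective ($\Ran(G)$ is dense in $H$). For a positive operator $S$ on the finite-dimensional $H_n$ one has $\|S^{-1/2}w\|=\sup_{0\ne\phi\in H_n}|\langle w,\phi\rangle|\,\langle S\phi,\phi\rangle^{-1/2}$; with $S=\Pi_nGG^*\Pi_n$ this gives $\langle S\phi,\phi\rangle=\|G^*\phi\|^2$, hence
\[
\big\|(\Pi_nGG^*\Pi_n)^{-1/2}w\big\|=\sup_{0\ne\phi\in H_n}\frac{|\langle w,\phi\rangle|}{\|G^*\phi\|}\qquad\forall\,w\in H_n .
\]
Now the lower bound $V\subset\Ran(G)$ in \eqref{HP:H1} enters decisively: $H_n\subset V\subset\Ran(G)$, so any $w\in H_n$ equals $G(G^{-1}w)$ with $G^{-1}w\in H$, and by the closed graph theorem $G^{-1}$ is bounded from $V=D(A^{1/2})$ into $H$, so $\|G^{-1}w\|\le C_G\|A^{1/2}w\|$. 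Since $|\langle w,\phi\rangle|=|\langle G^{-1}w,G^*\phi\rangle|\le\|G^{-1}w\|\,\|G^*\phi\|$, we conclude
\[
\big\|(\Pi_nGG^*\Pi_n)^{-1/2}w\big\|\le\|G^{-1}w\|\le C_G\|A^{1/2}w\|\qquad\forall\,w\in H_n ,
\]
uniformly in $n$.

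The second ingredient is an estimate for the variational process. As the noise in \eqref{EQ:NSmodifiedapproximated} is additive, $\eta_s$ solves pathwise the noise-free linear equation $\eta_s'=-A\eta_s-DF_{R,n}(X^{R,n}_s)\eta_s$, $\eta_0=h$, where $F_{R,n}(x)=\Theta_R(\|A^{1/4}x\|^2)B_n(x)$ and
\[
\langle DF_{R,n}(x)\eta,\eta\rangle=2\Theta_R'(\|A^{1/4}x\|^2)\langle A^{1/2}x,\eta\rangle\langle B_n(x),\eta\rangle+\Theta_R(\|A^{1/4}x\|^2)\langle DB_n(x)\eta,\eta\rangle .
\]
Testing the equation with $\eta_s$ gives $\tfrac12\tfrac{\d}{\d s}\|\eta_s\|^2+\|A^{1/2}\eta_s\|^2=-\langle DF_{R,n}(X^{R,n}_s)\eta_s,\eta_s\rangle$. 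Here the cubic part simplifies, $\langle DB_n(x)\eta,\eta\rangle=b(x,\eta,\eta)+b(\eta,x,\eta)=-b(\eta,\eta,x)$, the first summand vanishing by antisymmetry of $b$; moreover $\|A^{1/4}X^{R,n}_s\|^2\le R+1$ on the support of $\Theta_R$. Using Lemma \ref{LEM:bB} in the forms $|b(\eta,\eta,x)|\le c_0\|A^{1/2}\eta\|\,\|A^{\rho}\eta\|\,\|A^{1/4}x\|$ (with $\rho>1/4$ close to $1/4$) and $|\langle B_n(x),\eta\rangle|=|b(x,x,\eta)|\le c_0\|A^{1/4}x\|^2\|A^{1/2}\eta\|$, together with the interpolation inequality (Lemma \ref{LEM:sobolevinterpolation}) and Young's inequality, all the nonlinear terms are absorbed into $\tfrac12\|A^{1/2}\eta_s\|^2+C_R\|\eta_s\|^2$, so that
\[
\tfrac{\d}{\d s}\|\eta_s\|^2+\|A^{1/2}\eta_s\|^2\le C_R\|\eta_s\|^2 .
\]
Gr\"onwall's lemma yields $\|\eta_s\|^2\le e^{C_Rs}\|h\|^2$ and, integrating once more, $\int_0^t\|A^{1/2}\eta_s\|^2\,\d s\le K_R(t)\|h\|^2$, pathwise and uniformly in $n$. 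Combining with the operator bound, $\int_0^t\mE\|(\Pi_nGG^*\Pi_n)^{-1/2}\eta_s\|^2\,\d s\le C_G^2K_R(t)\|h\|^2$, which gives the claim with $C_R(t)=C_GK_R(t)^{1/2}/t$.

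The main obstacle is exactly the uniform control of $(\Pi_nGG^*\Pi_n)^{-1/2}$: because $G$ need not be boundedly invertible (e.g. $G=A^{-\g}L$), this operator genuinely blows up as $n\to\infty$ on all of $H_n$, and the estimate survives only because it is applied to $\eta_s$, which lies in $V\subset\Ran(G)$ --- this is where \eqref{HP:H1}, and not merely \eqref{HP:H0}, is indispensable. A secondary care is to keep the variational energy estimate $n$-independent, which works thanks to the cancellation $b(x,\eta,\eta)=0$ and the a priori bound on $\|A^{1/4}X^{R,n}_s\|$ forced by the truncation $\Theta_R$.
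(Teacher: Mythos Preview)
Your proof is correct and follows the same overall strategy as the paper: mean value theorem, Bismut--Elworthy formula \eqref{EQ:Bismut} with It\^o isometry, then the two ingredients $\|(\Pi_nGG^*\Pi_n)^{-1/2}w\|\le C\|A^{1/2}w\|$ and $\int_0^t\|D_x[X^{R,n}_s]h\|_V^2\,\d s\le C_R(t)\|h\|^2$, which in the paper are separated out as Propositions \ref{PROP:casini} and \ref{PROP:estderi}. Your energy estimate for $\eta_s$ matches Proposition \ref{PROP:estderi} essentially line by line (same cancellation $b(x,\eta,\eta)=0$, same use of the truncation to force $\|A^{1/4}X^{R,n}_s\|^2\le R+1$, same interpolation/Young absorption).

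The one place where you take a genuinely different route is the operator bound: the paper (Proposition \ref{PROP:casini}) computes the inverse $\big(A^{1/2}\Pi_nGG^*\Pi_nA^{1/2}\big)^{-1}=\big((\Pi_nG)^{-1}A^{-1/2}\big)^*(\Pi_nG)^{-1}A^{-1/2}$ explicitly and then pairs with $A^{1/2}x$, whereas you use the variational identity $\|S^{-1/2}w\|=\sup_{\phi\ne0}|\langle w,\phi\rangle|/\|G^*\phi\|$ together with $|\langle w,\phi\rangle|=|\langle G^{-1}w,G^*\phi\rangle|$. Your argument is shorter and makes the role of the inclusion $V\subset\Ran(G)$ (hence boundedness of $G^{-1}:V\to H$ by the closed graph theorem) completely transparent; the paper's computation is more algebraic but reaches the same constant. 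Both approaches are equivalent and yield the same $n$-independent bound.
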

 \begin{proof}
Let us fix $R,t>0$, $n\in\mathbb N$ and take $\varphi:H_n\to\R$ bounded and continuous, then $P_t^{R,n}\varphi$ is differentiable at all times $t>0$ by the Bismut-Elworthy formula. 
By the mean value theorem we have, for all $x,y\in H_n$ and $t>0$
 \begin{gather}\label{EQ:lemmaBismut}\notag
  \big|P^{R,n}_t\varphi(x)-P^{R,n}_t\varphi(y)|\leq \sup_{z\in H_n}\big|\nabla P_t^{R,n}\varphi(z)\boldsymbol{\cdot}(x-y)\big|\\
  \leq \frac{1}{t}\|\varphi\|_{\infty}
\sup_{z\in H_n}\left(\mE\int_0^t\big\|(\Pi_nGG^*\Pi_n)^{-1/2} D_z\big[X^{R,n}_s(z)\big](x-y)\big\|^2\d s\right)^{1/2},
  \end{gather}
where we employed equation \eqref{EQ:Bismut} with $h=x-y$, controlled the first factor inside the expectation via the boundedness of $\varphi$, and It\^o's isometry.
Since $V\subset \Ran(G)$ and by adapting known estimates from \cite{FlandoliMaslowski1995ergodicity} (details are provided in the appendix, see Propositions \ref{PROP:casini} and  \ref{PROP:estderi}), we get 
\[
\mE\int_0^t\big\|(\Pi_nGG^*\Pi_n)^{-1/2} D_z\big[X^{R,n}_s(z)\big](x-y)\big\|^2\d s \leq 
\mE\int_0^t{\big\|D_z\big[X^{R,n}_s(z)\big](x-y)\big\|}_V^2\d s
\leq C_R(t)\|x-y\|^2,
\]
for a certain positive constant depending only on $R$ and $t$.
By inserting this result back into equation \eqref{EQ:lemmaBismut} we reach the thesis of the lemma for a possibly different constant $C_R(t)$.
 \end{proof}

  We study the following modified version of the stochastic N-S equations for fixed $R>0$ and $x\in H$
\begin{equation}\label{EQ:NSmodified}
    \begin{cases}
         \d X^R_t+AX^R_t\d t +\Theta_R\big(\|A^{\frac14}X^R_t\|^{2}         \big)B\big(X^R_t\big)\d t =G\d W_t\qquad t>0,\, \mP-a.s.\\
         X^R_0=x\qquad \mP-a.s.
     \end{cases}
\end{equation}
where the non linearity is continuously truncated to $0$ as soon as the $ D(A^{\frac14})$-norm of the solution exceeds the parameter $R$.

\begin{lemma}\label{LEM:allisfine}
    The results presented in Section \ref{SEC:Mainresult} still hold true if we replace equation \eqref{EQ:NS} with its modified version \eqref{EQ:NSmodified}. In particular for all $G$ as in hypothesis \eqref{HP:H1} and for all $x\in  D(A^{\frac14})$ we have $ X^{R}\in C\big([0,T]; D(A^{\frac14})\big)$ for all $R,T>0$ and $\mP-a.s.$
    Moreover a sub-sequence of $X^{R,n}$ converges to $X^R$ in $C\big([0,T]; D(A^{\frac14})\big)$ for all $T>0$, $\mP-a.s.$ and uniformly both in $R$ and in $x$, for $x$ in bounded sets of $ D(A^{\frac14})$.
\end{lemma}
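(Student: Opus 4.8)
The plan is to transcribe the whole argument of Section~\ref{SEC:Mainresult} with the bounded scalar factor $\Theta_R\big(\|A^{1/4}\,\cdot\,\|^2\big)\in[0,1]$ inserted in front of $B$, checking at each step that this factor is harmless and, above all, that it does not spoil the uniformity in the parameter $R$. Besides the truncation, the finite-dimensional equation \eqref{EQ:NSmodifiedapproximated} carries the Galerkin noise $\Pi_nG\d W^n$: writing $z^n(t):=\int_0^te^{-(t-s)A}\Pi_nG\d W^n_s$ and $v_n^R:=X^{R,n}-z^n$, the process $v_n^R$ is the $H_n$-valued solution of $(v_n^R)'+Av_n^R+\Theta_R\big(\|A^{1/4}(v_n^R+z^n)\|^2\big)B_n(v_n^R+z^n)=0$ with $v_n^R(0)=x_n$, and the computation in the proof of Theorem~\ref{TH:z}, applied to $\Pi_nG\Pi_n$ in place of $G$, gives $z^n\to z$ in $C\big([0,T];D(A^\g)\big)$ $\mP$-a.s.\ for every $\g<1/4+\eps$; in particular, $\mP$-a.s., the $z^n$ are uniformly bounded in $n$ in all the norms used below. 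I would first establish the analogue of Theorem~\ref{TH:knownresult} for \eqref{EQ:NSmodified}: since $\Theta_R\le1$ the classical energy estimates are controlled exactly as in \cite{flandoli1994dissipativity}, so, as in Remark~\ref{REM:v_ntov}, a subsequence of $v_n^R$ converges weakly* in $L^\infty(0,T;H)$, weakly in $L^2(0,T;V)$ and strongly in $L^2(0,T;H)$ to the generalized solution $v^R\in C\big([0,T];H\big)\cap L^2(0,T;V)$, in the sense of equation~\eqref{EQ:defv}, of $v'+Av+\Theta_R\big(\|A^{1/4}(v+z)\|^2\big)B(v+z)=0$, $v(0)=x$; pathwise uniqueness of $X^R:=v^R+z$ in the class of Theorem~\ref{TH:knownresult} follows by adapting \cite{ferrario2003uniqueness}, the additional terms produced by the Lipschitz function $\Theta_R$ being of lower order.

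Next I would re-run Lemma~\ref{LEM:atimaprioriferrario} and Lemma~\ref{LEM:priola} for $v_n^R$. Every occurrence of $B_n(v_n^R+z^n)$ now carries the scalar prefactor $\Theta_R\big(\|A^{1/4}(v_n^R+z^n)\|^2\big)\in[0,1]$; pulling it out of the relevant inner products and $H$-norms and bounding it by $1$, all the inequalities go through verbatim, with ``$z$ bounded in $C\big([0,T];D(A^{1/4})\big)$ and in $L^4\big(0,T;D(A^{1/4+\eps/2})\big)$'' replaced throughout by the same bound for $z^n$, uniform in $n$. Hence Lemma~\ref{LEM:atimaprioriferrario} yields $\sup_n\int_0^T\|A^{1/4}v_n^R(t)\|^p\d t\le c_1$ and Lemma~\ref{LEM:priola} yields $\sup_n\sup_{t\in[0,T]}\|A^\g v_n^R(t)\|\le c_2$ for $x\in D(A^\g)$, $\g\in[1/4,1/4+\eps)$, with $c_1,c_2$ \emph{independent of $R$} — the discarded factor $\Theta_R$ being $\le1$ for every $R$ — and bounded for $x$ in bounded subsets of $D(A^\g)$.

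I would then copy the proof of Theorem~\ref{TH:ulunardi}, for an arbitrary $\g\in[0,1/4+\eps)$ (the case $\g=1/4$ being the one that enters the \textit{(SF)} estimates). The Duhamel term $h_n^R(t):=\int_0^te^{-(t-s)A}\Theta_R(\cdots)B_n\big(v_n^R(s)+z^n(s)\big)\d s$ is bounded in $C^{1/2-\g}\big([0,T];D(A^\g)\big)$ by the Lunardi estimate together with Lemma~\ref{LEM:bB} (again using $\Theta_R\le1$), and running Lemma~\ref{LEM:priola} with a slightly larger exponent $\g'\in(\g\vee 1/4,1/4+\eps)$ places $h_n^R(t)$ in a fixed compact subset of $D(A^\g)$ for each $t$; Arzel\`a--Ascoli then extracts a subsequence along which $v_n^R=e^{-\cdot A}x_n-h_n^R\to v^R$ in $C\big([0,T];D(A^\g)\big)$, and comparison with the weak* limit $v^R$ of the first step identifies this limit (the strong convergence makes the passage to the limit in $\Theta_R$ immediate), so that $X^R=v^R+z\in C\big([0,T];D(A^\g)\big)$ $\mP$-a.s.\ for $x\in D(A^\g)$, while Proposition~\ref{PROP:t>0} adapts in the same way for $x\in H$. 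Finally, combining $v_n^R\to v^R$ with $z^n\to z$ gives $X^{R,n}\to X^R$ in $C\big([0,T];D(A^{1/4})\big)$; because all the bounds above are uniform in $R$ and over bounded subsets of $x\in D(A^{1/4})$, and bounded subsets of $C\big([0,T];D(A^{1/4})\big)$ are precompact in $C\big([0,T];H\big)$, one upgrades this to convergence uniform in $R$ and in $x$ over bounded subsets by a contradiction argument along a hypothetical bad sequence $(n_k,R_k,x_k)$, using the $R$-independence of $c_1,c_2$ and the uniqueness of the limiting equation to force the contradiction.

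The routine part is checking that $\Theta_R\le1$ leaves every inequality of Section~\ref{SEC:Mainresult} untouched; the genuine care is required at two points — carrying ``uniformly in $n$'' through every estimate after replacing the exact convolution $z$ by the Galerkin convolutions $z^n$, and upgrading ``convergence along a subsequence for each fixed $(R,x)$'' to ``convergence uniform in $R$ and in $x$ over bounded sets'', which is where the $R$-independence of the constants and the uniqueness of the limit equation are essential. I expect this last uniformity statement to be the main obstacle.
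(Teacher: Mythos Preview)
Your strategy matches the paper's: carry the factor $\Theta_R\le 1$ through Lemmas~\ref{LEM:atimaprioriferrario} and~\ref{LEM:priola} unchanged, redo the Arzel\`a--Ascoli step of Theorem~\ref{TH:ulunardi}, and observe that all constants are independent of $R$ and uniform for $x$ in bounded sets of $D(A^{1/4})$.

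The one substantive difference is the treatment of the approximated Ornstein--Uhlenbeck process. You work with $z^n(t)=\int_0^te^{-(t-s)A}\Pi_nG\Pi_n\,\d W_s$, which is the OU process genuinely matching the noise $\Pi_nG\,\d W^n$ in \eqref{EQ:NSmodifiedapproximated}; this makes $v_n^R=X^{R,n}-z^n$ satisfy the deterministic equation exactly. The paper instead takes $z_n:=\Pi_n z$, the projection of the full convolution, which gives the trivial pathwise bound $\|A^\alpha z_n(t)\|\le\|A^\alpha z(t)\|$ and lets Arzel\`a--Ascoli (equicontinuity from the H\"older bound on $z$, compactness from $D(A^{\g'})\hookrightarrow D(A^\g)$) extract a subsequence $z_n\to z$ in $C\big([0,T];D(A^\g)\big)$. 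Your route is cleaner conceptually, but your justification ``the computation in the proof of Theorem~\ref{TH:z}, applied to $\Pi_nG\Pi_n$, gives $z^n\to z$'' is not quite right: that proof yields regularity of each $z^n$ with $n$-uniform moment bounds, not $\mP$-a.s.\ convergence or $\mP$-a.s.\ uniform-in-$n$ pathwise bounds; you would still need a Borel--Cantelli or factorization argument to close this. The paper's projection trick sidesteps that entirely.

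Your final contradiction argument for uniformity in $(R,x)$ is an elaboration the paper does not spell out; the paper simply records that the constants $c_1,c_2$ are $R$-free (since $\|\Theta_R\|_\infty=1$) and uniform on bounded sets of $D(A^{1/4})$ (from Lemma~\ref{LEM:priola}), which is all that is actually used downstream in Lemmas~\ref{LEM:PR} and~\ref{LEM:fer}.
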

\begin{proof}
We remark that the previous known results gathered in Theorem \ref{TH:knownresult} maintain their validity for the modified version of the N-S equations (see the appendix in \cite{FlandoliMaslowski1995ergodicity}). In order to adapt the new results in Section \ref{SEC:Mainresult}, we proceed by the finite dimensional approximations in equation \eqref{EQ:NSmodifiedapproximated}. The two differences from Section \ref{SEC:Mainresult} lie in the presence of the truncating function $\Theta_R$ and in the finite approximation of the Wiener noise.

We observe that the proofs of Lemmas \ref{LEM:atimaprioriferrario} and \ref{LEM:priola} are not modified if we add $\Theta_R$ in front of the non-linearity $B_n$ and replace $z$ by its finite dimensional approximation $z_n=\Pi_nz $. Indeed we can always employ the obvious estimates $\|\Theta_R\|_\infty=1$ and $\|A^\a z_n(t)\|\leq \|A^\a z(t)\|$ for all $t,\alpha\geq 0$ and $\mP-a.s.$
Thus we obtain analogous \textit{a priori} estimates for $v_n^R:=X^{R,n}-z_n$.

    In the first step from the proof of Theorem \ref{TH:ulunardi}, estimates for $v_n^R$ do not change if we replace $B_n$ with $\Theta_RB_n$ and $z$ with $z_n$. We conclude that a sub-sequence of $v_n^R$ converges in $C\big([0,T];D(A^\g)\big)$.
    
    As for the second step from the proof of Theorem \ref{TH:ulunardi}: we have $X^{R,n}=v^R_n+z_n$ and we need to prove that $z_n$ converges to $z$ in $C\big([0,T];D(A^\g)\big)$. This fact is true for a sub-sequence, 
    that we extract almost certainly through the Arzelà-Ascoli theorem. Indeed, the equi-continuity of the sequence derives from the regularities expressed in Theorem \ref{TH:z} and the following estimate for any $\beta>0$ with $\b+\g<1/4+ \eps $
    \[
    \|z_{n}(t)-z_{n}(s)\|_{D(A^\g)}\leq \|z(t)-z(s)\|_{D(A^\g)}\leq \|z\|_{C^{\b}([0,T];D(A^\g))}|t-s|^{\beta}.
    \]
    As for the relative compactness, we use the compact embedding $D(A^\a)\hookrightarrow D(A^\g)$ for $\g<\a<1/4+ \eps $ and the arbitrariness of $\g<1/4+ \eps $.
\end{proof}

Furthermore, it is classical to show that the solution to equation \eqref{EQ:NSmodified} is a Markov process (\textit{cf.} \cite{DaPZab1996ergo}), thus its Markov semigroup will be denoted by $\{P_t^R\}^{}_{t\geq 0}$.

 \begin{lemma}\label{LEM:PR}
    For any $G$ as in hypothesis \eqref{HP:H1} and for all $R,t>0$, there exists a constant $C_R(t)$ that satisfies  
    \[
    \big| P^R_t\varphi(x)-P^R_t\varphi(y)\big|\leq C_R(t)\|\varphi\|^{}_\infty\|x-y\|\qquad \forall \, \varphi\in C_b\big( D(A^{\frac14})\big), \ \forall\, x,y\in  D(A^{\frac14}).
    \]
\end{lemma}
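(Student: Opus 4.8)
\textit{Proof idea.} The plan is to derive the estimate as the limit of the uniform finite-dimensional bounds of Lemma~\ref{LEM:PRn}, passing to the limit via the approximation result of Lemma~\ref{LEM:allisfine}; no new \textit{a priori} estimate is needed. Fix $R,t>0$, a function $\varphi\in C_b\big(D(A^{\frac14})\big)$ and two points $x,y\in D(A^{\frac14})$. For $n\in\mN$ set $x_n:=\Pi_nx\in H_n$ and $y_n:=\Pi_ny\in H_n$. Since $\Pi_n$ is the orthogonal projection onto $\mathrm{span}\{e_1,\dots,e_n\}$, it commutes with every power of $A$, so $\|A^{\frac14}x_n\|\le\|A^{\frac14}x\|$ and $\|A^{\frac14}y_n\|\le\|A^{\frac14}y\|$, the sequences $x_n,y_n$ converge to $x,y$ in $D(A^{\frac14})$, and $\|x_n-y_n\|=\|\Pi_n(x-y)\|\le\|x-y\|$. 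Moreover $H_n$ is a finite-dimensional subspace of $D(A^{\frac14})$ on which the $D(A^{\frac14})$-norm and the $H$-norm are equivalent, so the restriction $\varphi|_{H_n}$ belongs to $C_b(H_n)$ with $\|\varphi|_{H_n}\|_\infty\le\|\varphi\|_\infty$.

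First I would apply Lemma~\ref{LEM:PRn} to the function $\varphi|_{H_n}$ at the points $x_n,y_n$, obtaining a constant $C_R(t)>0$, \emph{independent of $n$}, such that
\[
\big|P^{R,n}_t\varphi(x_n)-P^{R,n}_t\varphi(y_n)\big|\le C_R(t)\,\|\varphi\|_\infty\,\|x_n-y_n\|\le C_R(t)\,\|\varphi\|_\infty\,\|x-y\|\qquad\forall\,n\in\mN.
\]
Next, since $x$ and $y$ (hence all the $x_n,y_n$, by the bound above) lie in a common bounded set of $D(A^{\frac14})$, Lemma~\ref{LEM:allisfine} provides a single subsequence $(n_k)_{k\in\mN}$, which by the uniformity stated there may be chosen independently of the datum in that bounded set, along which $X^{R,n_k}_\cdot(x_{n_k})\to X^R_\cdot(x)$ and $X^{R,n_k}_\cdot(y_{n_k})\to X^R_\cdot(y)$ in $C\big([0,t];D(A^{\frac14})\big)$, $\mP$-a.s. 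In particular $\|A^{\frac14}\big(X^{R,n_k}_t(x_{n_k})-X^R_t(x)\big)\|\to 0$ almost surely, and likewise for $y$. Since $\varphi$ is continuous on $D(A^{\frac14})$ it follows that $\varphi\big(X^{R,n_k}_t(x_{n_k})\big)\to\varphi\big(X^R_t(x)\big)$ a.s., and since $\varphi$ is bounded the dominated convergence theorem yields
$P^{R,n_k}_t\varphi(x_{n_k})=\mE\,\varphi\big(X^{R,n_k}_t(x_{n_k})\big)\to\mE\,\varphi\big(X^R_t(x)\big)=P^R_t\varphi(x)$,
and analogously $P^{R,n_k}_t\varphi(y_{n_k})\to P^R_t\varphi(y)$. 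Letting $k\to\infty$ in the displayed inequality gives the assertion.

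The only delicate point is the bookkeeping in the limiting step: one must check that the subsequence furnished by Lemma~\ref{LEM:allisfine} can be fixed once and for all to serve both starting data $x$ and $y$ — which is exactly the content of the uniformity ``for $x$ in bounded sets of $D(A^{\frac14})$'' there — and that the finite-dimensional approximations in that lemma are the processes $X^{R,n}$ started from $\Pi_n$ of the limiting datum, so that it is $X^{R,n}_t(x_n)$ (and not $X^{R,n}_t(x)$) that converges to $X^R_t(x)$. Once this is in place, the bound is immediate from the $n$-uniform Lipschitz estimate of Lemma~\ref{LEM:PRn} and bounded convergence. \qed
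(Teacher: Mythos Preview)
Your argument is correct and follows essentially the same route as the paper: apply the $n$-uniform Lipschitz bound of Lemma~\ref{LEM:PRn} at the finite-dimensional level, then pass to the limit using the $\mP$-a.s.\ convergence in $C\big([0,T];D(A^{\frac14})\big)$ from Lemma~\ref{LEM:allisfine} together with dominated convergence. If anything, you are slightly more explicit than the paper in writing $x_n=\Pi_nx$, $y_n=\Pi_ny$ and in flagging that a common subsequence serves both initial data; the paper simply writes $X^{R,n}(x)$, $X^{R,n}(y)$ and invokes Lemma~\ref{LEM:allisfine} directly.
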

\begin{proof}
    We fix $R>0$ and $x,y\in  D(A^{\frac14})$, we denote by $X^{R,n}(x),X^{R,n}(y)$ for all $n\in\mathbb N$ and by $X^{R}(x),X^{R}(y)$ the solutions to equations \eqref{EQ:NSmodifiedapproximated}  and \eqref{EQ:NSmodified} respectively, with starting point $x$ or $y$ respectively. 
    By Lemma \ref{LEM:allisfine} we have for all $\varphi\in C_b\big( D(A^{\frac14})\big)$ and all $t\geq 0$
    \[
    \varphi\big(X_t^{R,n}(x)\big)\longrightarrow \varphi\big(X_t^R(x)\big),\quad 
    \varphi\big(X_t^{R,n}(y)\big)\longrightarrow \varphi\big(X_t^R(y)\big)\qquad   \mP-a.s.
     \text{ as } n\to\infty.
    \]
    Therefore by 
    the dominated convergence theorem we have 
    \[
    \begin{cases}
        P_t^{R,n}\varphi(x)=\mE\, \varphi\big(X^{R,n}(x)\big)\longrightarrow \mE\, \varphi\big(X^{R}(x)\big)=P_t^{R}\varphi(x)\\
        P_t^{R,n}\varphi(y)=\mE\, \varphi\big(X^{R,n}(y)\big)\longrightarrow \mE\, \varphi\big(X^{R}(y)\big)=P_t^{R}\varphi(y)
    \end{cases}\qquad \forall\, t\geq0, \text{ as }n\to\infty.
    \]
    Thus, for all $t\geq 0$
    \begin{gather*}
    \big|P_t^R\varphi(x)-P_t^R\varphi(y)\big|\leq 
    \big|P_t^R\varphi(x)-P_t^{R,n}\varphi(x)\big|+\big|P_t^{R,n}\varphi(x)-P_t^{R,n}\varphi(y)\big|+\big|P_t^{R,n}\varphi(y)-P_t^R\varphi(y)\big|\\
    \leq \big|P_t^{R,n}\varphi(x)-P_t^{R,n}\varphi(y)\big|+o(1), \text{ as }n\to\infty.
    \end{gather*}
    By use of the uniform estimate in $n$ provided by Lemma \ref{LEM:PRn}, and taking the limit inferior as $n$ goes to infinity, we obtain
    \[
    \big| P^R_t\varphi(x)-P^R_t\varphi(y)\big|\leq C_R(t)\|\varphi\|^{}_\infty\|x-y\|\qquad \forall\, t\geq 0.
    \]
\end{proof}

\begin{lemma}\label{LEM:fer}
    For every $G$ as in hypothesis \eqref{HP:H1}, for every bounded and countable set $U\subset  D(A^{\frac14})$ and for all times $t>0$
    it holds 
    \[
    \lim_{R\to\infty} \sup_{
    x\in U}
    \|{P_t^R}^*\delta_x-P_t^*\delta_x\|_{TV}^{}=0.
    \]
\end{lemma}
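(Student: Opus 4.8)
\section*{Proof proposal for Lemma~\ref{LEM:fer}}

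The plan is to couple the solutions of the truncated equation \eqref{EQ:NSmodified} and of the original equation \eqref{EQ:NS} on the common probability space on which they are driven by the same Wiener process, and to exploit the obvious fact that the two equations coincide as long as the $D(A^{\frac14})$-norm of the solution stays below $\sqrt R$ (there $\Theta_R\equiv 1$). Concretely, for $x\in U$ introduce the stopping time $\sigma_R^x:=\inf\{s\ge 0:\|A^{\frac14}X^x_s\|^2>R\}$, which is a.s.\ well defined since $X^x\in C\big([0,T];D(A^{\frac14})\big)$ $\mP$-a.s.\ by Theorem~\ref{TH:mainresult}. On the event $\{\sigma_R^x>t\}$ the process $X^x$ satisfies the integral form of \eqref{EQ:NSmodified} on $[0,t]$, so that a standard localization argument based on the pathwise uniqueness for \eqref{EQ:NSmodified} (Lemma~\ref{LEM:allisfine} and Theorem~\ref{TH:knownresult}) gives $X^R_t(x)=X^x_t$ there. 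Hence $\{X^R_t(x)\neq X^x_t\}\subset\{\sigma_R^x\le t\}$, so for every Borel set $\Gamma$ one gets $\big|{P^R_t}^*\delta_x(\Gamma)-P^*_t\delta_x(\Gamma)\big|\le \mP(\sigma_R^x\le t)$, and therefore, up to the universal constant fixed by the normalization of $\|\cdot\|_{TV}$,
\[
\sup_{x\in U}\|{P^R_t}^*\delta_x-P^*_t\delta_x\|_{TV}\le \sup_{x\in U}\mP\Big(\sup_{s\in[0,t]}\|A^{\tfrac14}X^x_s\|^2\ge R\Big).
\]

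It then remains to send the right-hand side to $0$ as $R\to\infty$, uniformly in $x\in U$. Here I would use the decomposition $X^x=v+z$, where $z$ is the (single, $x$-independent) trajectory of the Ornstein--Uhlenbeck process from Theorem~\ref{TH:z} and $v$ solves \eqref{EQ:systemv}, together with the a priori bound of Lemma~\ref{LEM:priola} applied with $\gamma=1/4$ (admissible, since $1/4\in[1/4,1/4+\eps)$). Writing $M:=\sup_{x\in U}\|A^{\tfrac14}x\|<\infty$ and invoking the ``uniform for $x$ in bounded sets'' clause of Lemma~\ref{LEM:priola}, one obtains an a.s.-finite random variable $c_2=c_2(\eps,G,\tfrac14,M,T)$, not depending on $x$, with $\sup_{s\in[0,T]}\|A^{\tfrac14}v(s)\|\le c_2$ $\mP$-a.s.\ for every $x\in U$; setting $\Xi:=c_2+\sup_{s\in[0,T]}\|A^{\tfrac14}z(s)\|$, which is again $x$-independent and a.s.\ finite by Theorem~\ref{TH:z}, one gets $\sup_{s\in[0,t]}\|A^{\tfrac14}X^x_s\|\le\Xi$ $\mP$-a.s.\ for each $x\in U$. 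Consequently $\sup_{x\in U}\mP\big(\sup_{s\in[0,t]}\|A^{\tfrac14}X^x_s\|^2\ge R\big)\le\mP(\Xi^2\ge R)$, and the latter tends to $0$ as $R\to\infty$ by continuity of $\mP$ along the decreasing events $\{\Xi^2\ge R\}$. Combining the two displays yields the claim; the countability of $U$ is used only to keep the suprema over $x\in U$ measurable, and the monotonicity of $c_2$ in $\|A^{\tfrac14}x\|$ lets one bypass even that.

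The main (mild) obstacle I anticipate is making the a priori bound of Lemma~\ref{LEM:priola} genuinely $x$-independent over the bounded set $U$ and, on the coupling side, justifying cleanly the localization step that identifies $X^R(x)$ with $X^x$ up to the exit time $\sigma_R^x$ — this is a standard stopping argument for an SPDE whose drift has been globally truncated, so once pathwise uniqueness for \eqref{EQ:NSmodified} is in hand (Lemma~\ref{LEM:allisfine}) it goes through routinely. The precise constant appearing in the total-variation estimate depends only on the chosen convention and does not affect the limit.
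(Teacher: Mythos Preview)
Your argument is correct and follows essentially the same coupling strategy as the paper's own proof: bound $\|{P^R_t}^*\delta_x-P^*_t\delta_x\|_{TV}$ by the probability that the solution exits the $D(A^{1/4})$-ball of radius $\sqrt R$ before time $t$, then use the a priori estimates of Section~\ref{SEC:Mainresult} to drive this probability to zero uniformly over the bounded set $U$. The only difference is cosmetic---the paper writes the exit event for the \emph{truncated} process $X^R$ and therefore invokes the $R$-uniform bound of Lemma~\ref{LEM:allisfine}, whereas you write it for the \emph{original} process $X^x$ and need only Lemma~\ref{LEM:priola} (passed from $v_n$ to $v$ via Theorem~\ref{TH:ulunardi}); either way one obtains an $x$-independent, a.s.\ finite random bound on $\sup_{s\le t}\|A^{1/4}X_s\|$, and the conclusion follows.
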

\begin{proof}
    Let us fix $G, U$ as in the hypothesis and take $x\in U$, $t>0$ and $\varphi\in C_b\big( D(A^{\frac14})\big)$ with $\|\varphi\|^{}_{\infty}\leq 1$, then we need to prove that 
    \[
    \lim_{R\to\infty}|P_t^R\varphi(x)-P_t\varphi(x)|=0
    \]
    uniformly with respect to $x$ and $\varphi$. 
    We define
    \[
    \Lambda:=\Big\{\sup_{x\in U
    }\sup_{t\in[0,T]}\|A^{\frac14}X_t^R(x)-A^{\frac14}X_t(x)\|^2>0\Big\}\subset \Big\{\sup_{x\in U
    }\sup_{t\in[0,T]}\|A^{\frac14}X_t^R(x)\|^2>R\Big\},
    \]
    where the inclusion holds by definition of $\Theta_R$ and uniqueness of equation \eqref{EQ:NSmodified}. Observe that $\Lambda$ is measurable in $\Omega$, thanks to the continuity of $X(x),X^R(x):[0,T]\to D(A^{\frac14})$ and the fact that $U$ is countable. We also have by direct inspection that $\mi_{\Lambda^c}X^R_t(x)=\mi_{\Lambda^c}X_t(x)$ for all times and all $x\in U$. 
    Therefore
    \begin{gather*}
        \sup_{x\in U
        }|P_t^R\varphi(x)-P_t\varphi(x)|
        \leq \sup_{x\in U
        }\mE\Big[\big|\varphi(X^R_t(x))-\varphi(X_t(x))\big|(\mi_\Lambda+\mi_{\Lambda^c})\Big]
        \leq 2\, \mP(\Lambda)\\
        \leq 2 \, \mE\Big[\mi_{(R,+\infty)}\Big(\sup_{x\in U        }\sup_{t\in[0,T]}\|A^{\frac14}X_t^R(x)\|^2\Big)\Big].
    \end{gather*}
    We know by Lemma \ref{LEM:allisfine} that $\|A^{\frac14}X_t^R(x)\|$ is $\mP-a.s.$ uniformly bounded both in time, in $R$ and in $x$ in bounded sets of $D(A^{\frac14})$, thus we conclude by the dominated convergence theorem 
    \[
    \sup_{x\in U
    }\|{P_t^R}^*\delta_x-P_t^*\delta_x\|^{}_{TV}\longrightarrow0\qquad \text{ as }R\to\infty.
    \]
\end{proof}

 \begin{theorem}
     For any $G$ as in hypothesis \eqref{HP:H1}, the Markov semigroup $\{P_t\}^{}_{t\geq 0}$ associated to the solution of equation \eqref{EQ:NS} (\textit{cf.} Section \ref{SEC:P_tmu}) enjoys the \textit{(SF)} property on $ D(A^{\frac14})\hookrightarrow H$.
 \end{theorem}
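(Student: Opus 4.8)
The plan is to deduce the \textit{(SF)} property of $\{P_t\}_{t\geq 0}$ on $D(A^{\frac14})\hookrightarrow H$ by combining the two ingredients just established: the uniform-in-$n$ Lipschitz-type estimate for the truncated approximations (Lemmas \ref{LEM:PRn} and \ref{LEM:PR}) and the control of the truncation error (Lemma \ref{LEM:fer}). The idea is that each $P_t^R$ is already Lipschitz with respect to the $H$-norm on bounded subsets of $D(A^{\frac14})$, with a constant independent of $R$ on the relevant bounded set, and that $P_t^R$ approximates $P_t$ as $R\to\infty$. Taking $R\to\infty$ transfers a modulus of continuity from $P_t^R$ to $P_t$.

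More precisely, fix $t>0$, $\varphi\in\mathcal B_b\big(D(A^{\frac14})\big)$, $x\in D(A^{\frac14})$ and a sequence $\{x_n\}\subset D(A^{\frac14})$ bounded in $D(A^{\frac14})$ with $\|x_n-x\|\to 0$ in $H$. First I would reduce to $\varphi\in C_b\big(D(A^{\frac14})\big)$: by a monotone-class / measure-theoretic density argument (or directly, since the transition kernels are Radon measures on the Polish space $D(A^{\frac14})$) it suffices to prove the convergence $P_t\varphi(x_n)\to P_t\varphi(x)$ for bounded continuous $\varphi$, because Lemma \ref{LEM:aux} and the equivalence of transition probabilities are not needed here — one only needs that $P_t^*\delta_{x_n}\to P_t^*\delta_x$ weakly, and then an approximation of a bounded Borel $\varphi$ by continuous functions in $L^1$ of the limiting measure closes the general case. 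Actually the cleanest route is: show weak convergence $P_t^*\delta_{x_n}\rightharpoonup P_t^*\delta_x$ on $D(A^{\frac14})$, which is exactly the statement $P_t\varphi(x_n)\to P_t\varphi(x)$ for all $\varphi\in C_b\big(D(A^{\frac14})\big)$, and observe that $P_t\varphi(x_n)\to P_t\varphi(x)$ for bounded Borel $\varphi$ then follows for the specific kernels at hand only if one has more (e.g. convergence in total variation), so I would instead aim directly at the bounded-continuous case, which is precisely what Definition \ref{DEF:irredSF} requires once combined with the $\sigma$-algebra identification in Remark \ref{REM:irredSF}\itemref{IT:sigma} — wait, no: Definition \ref{DEF:irredSF} asks for all $\varphi\in\mathcal B_b\big(D(A^{\frac14})\big)$. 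So the genuine task includes Borel $\varphi$.

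Hence the argument must be: for $\varphi\in C_b\big(D(A^{\frac14})\big)$ with $\|\varphi\|_\infty\le 1$, write
\[
|P_t\varphi(x_n)-P_t\varphi(x)|\le |P_t\varphi(x_n)-P_t^R\varphi(x_n)|+|P_t^R\varphi(x_n)-P_t^R\varphi(x)|+|P_t^R\varphi(x)-P_t\varphi(x)|.
\]
The middle term is $\le C_R(t)\|x_n-x\|$ by Lemma \ref{LEM:PR}, hence $\to 0$ as $n\to\infty$ for each fixed $R$; the outer two terms are each $\le \sup_{y\in U}\|{P_t^R}^*\delta_y-P_t^*\delta_y\|_{TV}$ where $U=\{x\}\cup\{x_n:n\in\mathbb N\}$ is a bounded countable subset of $D(A^{\frac14})$, so by Lemma \ref{LEM:fer} they can be made arbitrarily small uniformly in $n$ by taking $R$ large. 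A standard $\varepsilon/3$ argument (first choose $R$, then let $n\to\infty$) then gives $P_t\varphi(x_n)\to P_t\varphi(x)$ for continuous bounded $\varphi$. Crucially, because Lemma \ref{LEM:fer} is stated in \emph{total variation}, the same three-term bound works verbatim for an arbitrary $\varphi\in\mathcal B_b\big(D(A^{\frac14})\big)$: the outer terms are still bounded by $\|\varphi\|_\infty\sup_{y\in U}\|{P_t^R}^*\delta_y-P_t^*\delta_y\|_{TV}$, and the middle term by Lemma \ref{LEM:PRn}/\ref{LEM:PR} needs $\varphi\in C_b$ — so for Borel $\varphi$ one must approximate $\varphi$ in $L^1\big({P_t^R}^*\delta_{x_n}+{P_t^R}^*\delta_x\big)$ by continuous functions, using tightness of these finitely many measures on the Polish space $D(A^{\frac14})$; this is where a little care is needed. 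The main obstacle is precisely this passage from continuous to bounded Borel test functions against the truncated semigroup $P_t^R$: one must upgrade the Bismut--Elworthy Lipschitz estimate of Lemma \ref{LEM:PR} from $C_b$ to $\mathcal B_b$, which can be done by a monotone-class argument exploiting that $P_t^R$ for $t>0$ maps $\mathcal B_b$ into $C_b$ (strong Feller of the non-degenerate finite-dimensional, and then limiting, truncated dynamics) — or, more simply, by noting that the inequality $|P_t^R\varphi(x)-P_t^R\varphi(y)|\le C_R(t)\|\varphi\|_\infty\|x-y\|$ is preserved under bounded pointwise limits of $\varphi$, hence extends to all bounded Borel $\varphi$ by a functional monotone class theorem once it is known on $C_b$. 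With that extension in hand, the $\varepsilon/3$ estimate above applies to every $\varphi\in\mathcal B_b\big(D(A^{\frac14})\big)$, proving the \textit{(SF)} property on $D(A^{\frac14})\hookrightarrow H$.
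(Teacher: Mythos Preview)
Your proposal is correct and follows essentially the same $\varepsilon/3$ argument as the paper, decomposing through $P_t^R$ and invoking Lemmas \ref{LEM:PR} and \ref{LEM:fer} exactly as you outline. The one simplification the paper makes over your monotone-class step is to observe directly that the total-variation norm of the signed measure ${P_t^R}^*\delta_x-{P_t^R}^*\delta_y$ on the Polish space $D(A^{\frac14})$ is already attained as a supremum over $\varphi\in C_b$, so the Lipschitz bound of Lemma \ref{LEM:PR} immediately yields $\|{P_t^R}^*\delta_x-{P_t^R}^*\delta_y\|_{TV}\le C_R(t)\|x-y\|$ and hence the middle term is controlled for every $\varphi\in\mathcal B_b$ without further work.
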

 
 \begin{proof}
By Lemma \ref{LEM:PR} we have for all $R>0$, $t>0$ and $x,y\in  D(A^{\frac14})$:
\begin{align*}
    \big\|{P^R_t}^*\delta_x-{P^R_t}^*\delta_y\big\|_{TV}&:=\sup \left\{|P_t\varphi(x)-P^R_t\varphi(y)| : \varphi\in \mathcal B_b\big( D(A^{\frac14})\big), \|\varphi\|_{\infty}\leq 1\right\}\\
    &=\sup \left\{|P_t\varphi(x)-P^R_t\varphi(y)| : \varphi\in C_b\big( D(A^{\frac14})\big), \|\varphi\|_{\infty}\leq 1\right\}\leq C_R(t)\|x-y\|.
\end{align*}
We now fix $t> 0$,  $x\in  D(A^{\frac14})$ and take an arbitrary sequence $\{x_n\}_{n\in\mN}^{}$ bounded in $D(A^{\frac14})$ and converging to $x\in D(A^{\frac14})$ with respect to the norm of $H$.
For any $\epsilon>0$, Lemma \ref{LEM:fer} returns $R>0$ (which depends both on $\epsilon$ and on $\sup_{n\in\mN}\|A^{\frac14}x_n\|$) large enough such that 
\begin{align*}
|P_t\varphi(x)-P_t\varphi(x_n)|
&\leq |P_t\varphi(x)-P^R_t\varphi(x)|+|P^R_t\varphi(x)-P^R_t\varphi(x_n)|+|P^R_t\varphi(x_n)-P_t\varphi(x_n)|\\
&\leq \Big[\|P_t^*\delta_x-{P^R_t}^*\delta_x\|_{TV}
+\|{P^R_t}^*\delta_x-{P^R_t}^*\delta_{x_n}\|_{TV}
+\|P_t^*\delta_{x_n}-{P^R_t}^*\delta_{x_n}\|_{TV}\Big]\|\varphi\|^{}_\infty\\
&\leq C_R(t)\|\varphi\|^{}_\infty\|x-x_n\|+2\|\varphi\|^{}_\infty\epsilon.
\end{align*}
By taking the limit inferior as $n$ goes to infinity we find out that
\[
0\leq\liminf_{n\to\infty}|P_t\varphi(x)-P_t\varphi(x_n)|\leq 2\|\varphi\|^{}_\infty\epsilon,
\]
which gives the thesis by arbitrariness of $\epsilon>0.$
\end{proof}


\newpage
\appendix
\newcounter{appendixsection}
\newcounter{appendixequation}
\renewcommand{\theappendixsection}{\Alph{appendixsection}}

\theoremstyle{plain}
\newtheorem{theo}{Theorem}[appendixsection]
\newtheorem{lem}[theo]{Lemma}
\newtheorem{prop}[theo]{Proposition}

\section*{Appendix}
\setcounter{appendixsection}{1}  

We provide a modified version of the classical Gr\"onwall lemma, as presented in 
\cite[Section $1.2.1$]{henry1981geometric}.
\begin{lem}[Modified Gr\"onwall's lemma]\label{LEM:modifiedgronwall}
    Let us take $a,b\geq 0$, $T>0$ and $\a,\b\in[0,1)$. There exists a finite constant $M>0$ so that for any integrable function $u:(0,T)\to\R$ satisfying 
    \[
    0\leq u(t)\leq at^{-\a}+b\int_0^t(t-s)^{-\b}u(s)\d s \qquad \text{for }a.e.\ t\in(0,T),
    \]
we have 
\[
0\leq u(t)\leq aMt^{-\a}\qquad \text{for }a.e.\ t\in(0,T).
\]
\end{lem}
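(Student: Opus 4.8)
The plan is to run the classical iteration scheme for singular Gr\"onwall inequalities (\textit{cf.}\ \cite[Lemma~$7.1.1$]{henry1981geometric}). Set $\theta:=1-\b\in(0,1]$ and let $g_0(s):=s^{-\a}$, a nonnegative function on $(0,T)$ which is integrable since $\a<1$. On the cone of nonnegative measurable functions on $(0,T)$ introduce the positive linear operator
\[
(\Phi g)(t):=b\int_0^t(t-s)^{\theta-1}g(s)\d s .
\]
Because the kernel $(t-s)^{\theta-1}$ is nonnegative and belongs to $L^1$ in the variable $s$, the operator $\Phi$ maps $L^1(0,T)$ into itself and is monotone ($0\le g_1\le g_2$ a.e.\ gives $0\le\Phi g_1\le\Phi g_2$ a.e.). The hypothesis reads $0\le u\le a\,g_0+\Phi u$ a.e.; substituting this bound back into the term $\Phi u$ and using linearity and monotonicity of $\Phi$, an induction on $N$ produces
\[
0\le u(t)\le a\sum_{k=0}^{N-1}(\Phi^k g_0)(t)+(\Phi^N u)(t)\qquad\text{for a.e.\ }t\in(0,T),\ \forall\,N\in\mN ,
\]
the exceptional null set being the countable union of those generated at each step (here $\Phi^0$ is the identity).

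First I would evaluate the iterates of $\Phi$ on $g_0$. Repeated use of the Beta integral $\int_0^t(t-s)^{\theta-1}s^{\mu-1}\d s=\frac{\Gamma(\theta)\Gamma(\mu)}{\Gamma(\theta+\mu)}t^{\theta+\mu-1}$ — legitimate at each step since the exponent starts at $\mu_0:=1-\a>0$ and only increases — gives, for $k\ge1$,
\[
(\Phi^k g_0)(t)=\frac{\big(b\,\Gamma(\theta)\big)^k\,\Gamma(1-\a)}{\Gamma(k\theta+1-\a)}\;t^{k\theta-\a} ,
\]
and $(\Phi^0 g_0)(t)=t^{-\a}$. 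Writing $T_\ast:=\max\{1,T\}$ one has $t^{k\theta}\le T_\ast^{k\theta}$ for $0<t\le T$; since $\Gamma(k\theta+1-\a)$ grows faster than any geometric sequence, the series $S:=\sum_{k\ge0}\frac{(b\,\Gamma(\theta)\,T_\ast^{\theta})^k}{\Gamma(k\theta+1-\a)}$ converges (it is a value of a Mittag-Leffler function). Hence $a\sum_{k=0}^{N-1}(\Phi^k g_0)(t)\le a\,\Gamma(1-\a)\,S\,t^{-\a}=:a\,M\,t^{-\a}$ uniformly in $N$, with $M:=\Gamma(1-\a)\,S$ depending only on $b,\b,T,\a$, as needed.

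Finally I would kill the remainder $\Phi^N u$. By Fubini and the semigroup identity for the kernels $(t-s)^{\theta-1}$, obtained by induction from the same Beta computation, one gets
\[
(\Phi^N u)(t)=\frac{\big(b\,\Gamma(\theta)\big)^N}{\Gamma(N\theta)}\int_0^t(t-s)^{N\theta-1}u(s)\d s ;
\]
choosing $N$ large enough that $N\theta\ge1$ and estimating $(t-s)^{N\theta-1}\le T_\ast^{\,N\theta-1}$ yields $0\le(\Phi^N u)(t)\le\frac{(b\Gamma(\theta))^N T_\ast^{N\theta-1}}{\Gamma(N\theta)}\,\|u\|_{L^1(0,T)}$, which tends to $0$ as $N\to\infty$ because $u$ is integrable and $\Gamma(N\theta)\to\infty$ super-exponentially. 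Passing to the limit $N\to\infty$ in the iteration inequality then gives $0\le u(t)\le a\,M\,t^{-\a}$ for a.e.\ $t\in(0,T)$. Honestly this lemma has no real obstacle, being a textbook result; the only steps needing care are the convolution/semigroup identities for the kernels $(t-s)^{\theta-1}$ (routine Fubini plus Beta-function bookkeeping) and the convergence of the Mittag-Leffler-type series $S$, which is precisely where the factorial-type growth of $\Gamma$ is used.
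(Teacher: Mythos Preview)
Your proposal is correct and follows exactly the standard iteration argument from \cite[Lemma~$7.1.1$]{henry1981geometric}; the paper does not give its own proof of this lemma but merely states it with a reference to \cite[Section~$1.2.1$]{henry1981geometric}, so your argument is the one the paper implicitly defers to.
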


\begin{proof} [\textbf{Proof of Lemma \ref{LEM:aux}}]
    Let us assume that $P_t^*\delta_{x_0}(U)>0$ for some $t>0$, $x_0\in  D(A^\a) $ and $U\in \mathscr B_{ D(A^\a) }$. 
    We arbitrarily fix $x\in  D(A^\a) $ and $s> 0$, then the thesis is proved once we show that $P^*_s\delta_x(U)>0$. 
    
    First we suppose $s>t$. The $\textit{(SF)}$ property on $ D(A^\a) \hookrightarrow H$ applied at time $s-t$ and at point $x_0$ assures that there are $\epsilon,M>0$ such that $P_{s-t}\mi_U(y)>0$ for all $y\in B_\epsilon^H(x_0)\cap  B_M^{D(A^\a)} :=\{y\in  D(A^\a)  \ : \ \|A^\a y\|<M, \ \|y-x_0\|<\epsilon\}$. 
    Moreover $P_t^*\delta_x>0$ on open non-empty Borel sets in $D(A^\a)$ thanks to irreducibility in $ D(A^\a) $. Resort to the Chapman-Kolmogorov equation to conclude:
\[
P_s^*\delta_x(U)=\int_{ H }P_{s-t}\mi_U(y)\, P_t^*\delta_x(\!\d y)\geq \int\limits_{B_\epsilon^H(x_0)\cap  B_M^{D(A^\a)} }P_{s-t}\mi_U(y)\, P_t^*\delta_x(\!\d y)>0.
\]

Assume now $s\in (0,t]$ and choose $h\in (t-s,t)$ to write
\[
0<P_t^*\delta_{x_0}(U)=\int_{ H }P_{t-h}\mi_U(y)\, P_h^*\delta_{x_0}(\!\d y).
\]
Therefore, by density of $D(A^\a)$ in $H$, there exists $y_0\in  D(A^\a) $ such that $P_{t-h}\mi_U(y_0)>0$ and so we can repeat word for word the first step of the proof by substituting $x_0$ with $y_0$ and $t$ with $t-h<s$. We conclude once again that $P_s^*\delta_x(U)>0$, which completes the proof.
\end{proof}

\begin{prop}\label{PROP:casini}
Given $G\in \mathcal L(H)$ injective and such that $V\subset \Ran(G)$, then there exists $C>0$ such that 
    \[
    \big\|(\Pi_nGG^*\Pi_n)^{-1/2}x\big\|\leq C\|x\|^{}_V\qquad \forall\, x\in H_n.
    \]
\end{prop}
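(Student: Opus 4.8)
The plan is to reduce the claim to the single quadratic--form inequality
\[
\langle A^{-1}x,x\rangle\ \le\ c\,\|G^{*}x\|^{2}\qquad\forall\,x\in H,
\]
for some constant $c>0$ (call it $(\star)$), and then to transfer $(\star)$ to the finite--dimensional compressions $\Pi_nGG^{*}\Pi_n$ by elementary operator monotonicity.

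\emph{Step 1: a bounded factorization of $A^{-1/2}$ through $G$.} Recall that $A^{-1/2}\in\mathcal L(H)$ is injective and $\Ran(A^{-1/2})=D(A^{1/2})=V\subseteq\Ran(G)$. Hence, either by Douglas' range--inclusion lemma or directly by the closed graph theorem applied to $G^{-1}\colon V\to H$ (well defined since $G$ is injective, and closed since $G\in\mathcal L(H)$ and $V\hookrightarrow H$ continuously), one obtains a bounded operator $T:=G^{-1}A^{-1/2}\in\mathcal L(H)$ with $A^{-1/2}=GT$. Since $A^{-1/2}$ is self--adjoint this gives $A^{-1}=A^{-1/2}(A^{-1/2})^{*}=G\,TT^{*}G^{*}$, so that for every $x\in H$
\[
\langle A^{-1}x,x\rangle=\langle TT^{*}G^{*}x,G^{*}x\rangle=\|T^{*}G^{*}x\|^{2}\le\|T\|^{2}\,\|G^{*}x\|^{2},
\]
which is $(\star)$ with $c=\|T\|^{2}$. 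In particular $(\star)$ forces $G^{*}x\neq0$ whenever $x\neq0$ (because $A^{-1/2}$ is injective), so $\Pi_nGG^{*}\Pi_n$ is positive definite, hence invertible, on the finite--dimensional space $H_n$.

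\emph{Step 2: restriction to $H_n$ and order manipulations.} Because the $e_k$ are eigenvectors of $A$, the projection $\Pi_n$ commutes with every power of $A$; writing $A_n$ for the part of $A$ in $H_n$, one has $\langle A^{-1}x,x\rangle=\langle A_n^{-1}x,x\rangle$ and $\|G^{*}x\|^{2}=\langle\Pi_nGG^{*}\Pi_n x,x\rangle$ for $x\in H_n$. Thus, as positive definite quadratic forms on $H_n$, $(\star)$ reads $A_n^{-1}\le c\,\Pi_nGG^{*}\Pi_n$. Inversion reverses the order, $(\Pi_nGG^{*}\Pi_n)^{-1}\le c\,A_n$, and the operator monotonicity of the square root (L\"owner--Heinz) yields $(\Pi_nGG^{*}\Pi_n)^{-1/2}\le\sqrt c\,A_n^{1/2}$. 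Therefore, for $x\in H_n$,
\[
\big\|(\Pi_nGG^{*}\Pi_n)^{-1/2}x\big\|^{2}=\big\langle(\Pi_nGG^{*}\Pi_n)^{-1}x,x\big\rangle\le c\,\langle A_nx,x\rangle=c\,\|A^{1/2}x\|^{2}=c\,\|x\|_V^{2},
\]
so the assertion holds with $C=\sqrt c$.

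\emph{Main obstacle.} The only real content is Step 1: upgrading the purely set--theoretic inclusion $V\subseteq\Ran(G)$ to the \emph{quantitative} factorization $A^{-1/2}=GT$ with $T$ bounded, which is where injectivity of $G$ together with a closed--graph (equivalently, Douglas) argument enters. Once $(\star)$ is available everything is routine, but one must be careful to verify that the compressions $\Pi_nGG^{*}\Pi_n$ are genuinely invertible on $H_n$, so that the order reversal under inversion and the L\"owner--Heinz inequality are legitimate.
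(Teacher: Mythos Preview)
Your proof is correct. The core idea is the same as the paper's---turning the set-theoretic inclusion $V\subset\Ran(G)$ into the bounded factorization $A^{-1/2}=GT$ and then comparing $(\Pi_nGG^*\Pi_n)^{-1}$ with $A_n$---but the execution is organized differently. The paper works directly on $H_n$: it introduces an inverse of $\Pi_nG$ on its range, writes down an explicit formula for $\big(A^{1/2}\Pi_nGG^*\Pi_nA^{1/2}\big)^{-1}$ in terms of $(\Pi_nG)^{-1}A^{-1/2}$, and reads off the bound. You instead establish the global inequality $A^{-1}\le c\,GG^*$ via Douglas' lemma, compress it to $H_n$, and invoke order-reversal of inversion. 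Your route is a bit cleaner: it makes the $n$-independence of the constant transparent (it is simply $\|G^{-1}A^{-1/2}\|$), and it sidesteps the somewhat delicate handling of $(\Pi_nG)^{-1}$ in the paper. One minor remark: the L\"owner--Heinz step you state is unnecessary, since in the very next line you only use $(\Pi_nGG^*\Pi_n)^{-1}\le c\,A_n$ at the level of quadratic forms; you may simply delete that sentence.
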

\begin{proof}
The proof is inspired by \cite[Section $6$]{FlandoliMaslowski1995ergodicity} and we present it for the sake of completeness.\\
     \textit{Step $1$.} Notice that $\Pi_nG:H\to\Pi_n\!\Ran(G)$ is bijective, thus its inverse operator is well-defined, yet generally unbounded $(\Pi_nG)^{-1}:\Pi_n\!\Ran(G)\to H$. From $V\subset \Ran(G)$ we have $H_n=\Pi_nV\subset\Pi_n\!\Ran(G)$, therefore it is well-defined and bounded $(\Pi_nG)^{-1}A^{-\frac12}:H_n\to H$. 

     \textit{Step $2$.} We show that $A^{\frac12}\Pi_nGG^*\Pi_nA^{\frac12}:H_n\to H_n$ is invertible with bounded inverse. Indeed we have
\begin{align*}        
&\quad\,\Big[\big((\Pi_nG)^{-1}A^{-\frac12}\big)^*(\Pi_nG)^{-1}A^{-\frac12}\Big]\Big[A^{\frac12}\Pi_nGG^*\Pi_nA^{\frac12}\Big]\\
        &=\big((\Pi_nG)^{-1}A^{-\frac12}\big)^*(A^{\frac12}\Pi_nG)^*\\
        &=\Big[A^{\frac12}\Pi_nG(\Pi_nG)^{-1}A^{-\frac12}\Big]^*=\id_{H_n}.
    \end{align*}
    This proves that $\big(A^{\frac12}\Pi_nGG^*\Pi_nA^{\frac12}\big)^{-1}=\big((\Pi_nG)^{-1}A^{-\frac12}\big)^*(\Pi_nG)^{-1}A^{-\frac12}$, which is bounded in $H_n$ by the previous step. Therefore there exists $C>0$ such that 
    \[    \big\|\big(A^{\frac12}\Pi_nGG^*\Pi_nA^{\frac12}\big)^{-1}x\big\|\leq C\|x\|\qquad\forall\,x\in H_n.
    \]
    
     \textit{Step $3$.} Let us now take $x\in H_n$ then
    \begin{align*}
        \big\|(\Pi_nGG^*\Pi_n)^{-1/2}x\big\|^2&=\big\langle (\Pi_nGG^*\Pi_n)^{-1}x,x\big\rangle\\
        &=\big\langle A^{\frac12}A^{-\frac12}(\Pi_nGG^*\Pi_n)^{-1}A^{-\frac12}A^{\frac12}x,x\big\rangle\\
        &=\big\langle (A^{\frac12}\Pi_nGG^*\Pi_nA^{\frac12})^{-1}A^{\frac12}x,A^{\frac12}x\big\rangle\\&
        \leq C\|A^{\frac12}x\|^2.
    \end{align*}
\end{proof}

\begin{prop}\label{PROP:estderi}
    For any $R>0$ and $t>0$ there exists a constant $C_R(t)>0$ such that for all $G$ as in hypothesis \eqref{HP:H1} it holds
    \[
    \int_0^t\big\|D_x[X_s^{R,n}(x)]h\big\|^2_V\d s \leq C_R(t)\|h\|^2\qquad \forall\, n\in\mathbb N, \ \forall\, h,x\in H_n, \ \mP-a.s.
    \]
\end{prop}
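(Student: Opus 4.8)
The plan is to write down the linearized equation for the derivative process and run a pathwise energy estimate in $V$. Fix $R,t>0$, $n\in\mN$ and $x,h\in H_n$, abbreviate $Y_s:=X^{R,n}_s(x)$ and $\eta_s:=D_x[X^{R,n}_s(x)]h$. Since the noise in \eqref{EQ:NSmodifiedapproximated} is additive and the drift $y\mapsto Ay+\Theta_R(\|A^{\frac14}y\|^2)B_n(y)$ is of class $C^\infty$ on the finite dimensional space $H_n$ (the map $y\mapsto\|A^{\frac14}y\|^2$ is a quadratic form, $\Theta_R$ is smooth and $B_n$ is quadratic), the standard differentiability theory for SDEs with smooth coefficients (\textit{cf.}~\cite[Chapter $1$]{cerrai2001second}) gives that $\mP$-a.s.\ the path $s\mapsto\eta_s$ is of class $C^1$ and solves the linear random ODE in $H_n$
\[
\eta_s'+A\eta_s+\Theta_R\big(\|A^{\frac14}Y_s\|^2\big)\,DB_n(Y_s)\eta_s+2\,\Theta_R'\big(\|A^{\frac14}Y_s\|^2\big)\,\langle A^{\frac14}Y_s,A^{\frac14}\eta_s\rangle\,B_n(Y_s)=0,\qquad \eta_0=h,
\]
where, for the quadratic map $B$, one has $\langle DB(Y)\eta,\zeta\rangle=b(Y,\eta,\zeta)+b(\eta,Y,\zeta)$ and $DB_n(Y)\eta=\Pi_nDB(Y)\eta$.

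Next I would take the scalar product in $H$ of this identity with $\eta_s$. Since $\eta_s\in H_n$ the projector disappears, and $\langle DB(Y_s)\eta_s,\eta_s\rangle=b(Y_s,\eta_s,\eta_s)+b(\eta_s,Y_s,\eta_s)=b(\eta_s,Y_s,\eta_s)$ by antisymmetry of $b$; moreover $\langle B_n(Y_s),\eta_s\rangle=b(Y_s,Y_s,\eta_s)=-b(Y_s,\eta_s,Y_s)$. Hence, using $\langle A\eta_s,\eta_s\rangle=\|\eta_s\|_V^2$,
\[
\tfrac12\tfrac{\d}{\d s}\|\eta_s\|^2+\|\eta_s\|_V^2=-\Theta_R(\cdots)\,b(\eta_s,Y_s,\eta_s)+2\,\Theta_R'(\cdots)\,\langle A^{\frac14}Y_s,A^{\frac14}\eta_s\rangle\,b(Y_s,\eta_s,Y_s).
\]
On the supports of $\Theta_R$ and of $\Theta_R'$ one has $\|A^{\frac14}Y_s\|^2\le R+1$, which lets me remove every occurrence of $Y_s$ at the price of a constant $C_R$. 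For the first term I would apply Lemma \ref{LEM:bB} with $\theta=\frac12-\kappa$, $\rho=\frac14$, $\delta=\frac14+\kappa$ (admissible for small $\kappa>0$) to get $|b(\eta_s,Y_s,\eta_s)|\le c_0\|A^{\frac12-\kappa}\eta_s\|\,\|A^{\frac14}Y_s\|\,\|A^{\frac14+\kappa}\eta_s\|$, and then the interpolation Lemma \ref{LEM:sobolevinterpolation} between $H$ and $V$ to get $\|A^{\frac12-\kappa}\eta_s\|\,\|A^{\frac14+\kappa}\eta_s\|\le\|\eta_s\|^{1/2}\|\eta_s\|_V^{3/2}$. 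For the second term, Lemma \ref{LEM:bB} with $\theta=\delta=\frac14$, $\rho=\frac12$ gives $|b(Y_s,\eta_s,Y_s)|\le c_0\|A^{\frac14}Y_s\|^2\|\eta_s\|_V$, and interpolation gives $\|A^{\frac14}\eta_s\|\le\|\eta_s\|^{1/2}\|\eta_s\|_V^{1/2}$, so this term is likewise bounded by $C_R\|\eta_s\|^{1/2}\|\eta_s\|_V^{3/2}$. Young's inequality with exponents $4$ and $\tfrac43$ then yields, for any $\epsilon>0$, the pathwise bound $\tfrac12\tfrac{\d}{\d s}\|\eta_s\|^2+\|\eta_s\|_V^2\le 2\epsilon\|\eta_s\|_V^2+C_{R,\epsilon}\|\eta_s\|^2$.

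Choosing $\epsilon=\tfrac14$ I obtain $\tfrac{\d}{\d s}\|\eta_s\|^2+\|\eta_s\|_V^2\le C_R\|\eta_s\|^2$. Dropping the gradient term and applying Grönwall's lemma gives $\|\eta_s\|^2\le e^{C_Rs}\|h\|^2$ for $s\in[0,t]$; reinserting this and integrating the differential inequality over $[0,t]$ gives
\[
\int_0^t\|\eta_s\|_V^2\,\d s\le\|h\|^2+C_R\int_0^te^{C_Rs}\|h\|^2\,\d s\le e^{C_Rt}\|h\|^2=:C_R(t)\|h\|^2,
\]
which is the claim; by construction the constant depends only on $R$ and $t$, not on $n$, $x$ (and in fact not on $G$, since differentiating \eqref{EQ:NSmodifiedapproximated} in $x$ removes the additive noise, so \eqref{HP:H1} enters only through the well-posedness of $X^{R,n}$).

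I expect the main obstacle to be exactly the estimate of these trilinear terms with only a $D(A^{1/4})$-bound on $X^{R,n}$: one must distribute the $\tfrac12$ worth of derivatives that $b$ can absorb between the two copies of $\eta_s$ so that, after interpolation, the power $\|\eta_s\|_V^{3/2}$ appears and can be absorbed into $\|\eta_s\|_V^2$ — loading more onto one copy of $\eta_s$ forces a norm of $X^{R,n}$ stronger than $\|A^{\frac14}\cdot\|$, which is not controlled on the support of $\Theta_R$, whereas loading less leaves a power of $\|\eta_s\|_V$ above $2$. The antisymmetry identity $b(Y_s,Y_s,\eta_s)=-b(Y_s,\eta_s,Y_s)$ plays the analogous role in the $\Theta_R'$-term, shifting a derivative off $X^{R,n}$ onto $\eta_s$.
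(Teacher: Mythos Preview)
Your proof is correct and follows the same energy-estimate route as the paper: derive the linearised ODE for $U=\eta$, take the $H$-scalar product with $U$, bound the two trilinear terms by $C\|U\|^{1/2}\|U\|_V^{3/2}$ via Lemma~\ref{LEM:bB} and interpolation, absorb with Young, and close by Gr\"onwall.

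There is one point where your argument is actually cleaner than the paper's. You use that on the supports of $\Theta_R$ and $\Theta_R'$ one has $\|A^{\frac14}Y_s\|^2\le R+1$, which immediately yields a \emph{deterministic} constant $C_R(t)$ independent of $\omega$, $n$ and $x$. The paper instead bounds $|\Theta_R'(\cdot)|\le\|\Theta_R'\|_\infty$ first and then carries $\|A^{\frac14}u(t)\|$ into the function $K_R(t):=2c_0(1\vee\|\Theta_R'\|_\infty)\big(\sup_n\|A^{\frac14}u(t)\|^3+\sup_n\|A^{\frac14}u(t)\|\big)$; as written this is a random quantity, so the final $C_R(t)=2\big(1+\int_0^tK_R^4e^{K_R^4}\big)$ is not obviously deterministic. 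Your support observation patches this without any extra work, and is exactly what the paper should have done before passing to the sup-norm of $\Theta_R'$.

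A minor simplification: for the term $b(\eta_s,Y_s,\eta_s)$ you do not need the $\kappa$-shifted exponents. The choice $(\theta,\rho,\delta)=(\tfrac14,\tfrac14,\tfrac12)$ in Lemma~\ref{LEM:bB} already gives $|b(\eta_s,Y_s,\eta_s)|\le c_0\|A^{\frac14}\eta_s\|\,\|A^{\frac14}Y_s\|\,\|\eta_s\|_V$, and a single interpolation $\|A^{\frac14}\eta_s\|\le\|\eta_s\|^{1/2}\|\eta_s\|_V^{1/2}$ delivers the same $\|\eta_s\|^{1/2}\|\eta_s\|_V^{3/2}$ you obtain after two interpolations. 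This is the choice the paper makes.
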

\begin{proof}
    The proof is inspired by the appendix in \cite{FlandoliMaslowski1995ergodicity} and we present it for the sake of completeness. We fix $\mP-a.s.$ $\omega\in\Omega$ and parameters $n\in\mN$, $R>0$. For all starting points $x\in H_n$ and times $t\geq 0$ we denote for simplicity $u(t)=X_t^{R,n}(x)(\omega)$ and we recall its definition:
    \[
    u(t)+\int_0^tAu(s)\d s +\int_0^t\Theta_R\big(\|A^{\frac14}u(s)\|^2\big)B_n(u(s))\d s =x+\Pi_nGW^n_t.
    \]
    Given the fact that this equation is set in the finite dimensional space $H_n$, spanned by the first $n$ eigenvectors of $A$, we take its components with respect to $\{e_k\}_{k=1}^n$ and consider it in $\R^n$. Therefore we perform the Jacobian matrix $D_x$ of $u=(u_1,\dots,u_n)^T$ with respect to the starting point $x=(x_1,\dots, x_n)^T\in H_n$ and apply it to a vector $h=(h_1,\dots,h_n)^T\in H_n$:
    \begin{equation}\label{EQ:matricial} 
    \begin{split}
    D_x[u(t)]h+\int_0^tAD_x[u(s)]h\d s +\int_0^t\Theta'_R\big(\|A^{\frac14}u(s)\|^2\big)    h\cdot\nabla_x\|A^{\frac14}u(s)\|^2\, B_n\big(u(s)\big)
    \d s\\
    +\int_0^t\Theta_R\big(\|A^{\frac14}u(s)\|^2\big)D_x\big[B_n\big(u(s)\big)\big]h\d s=h.\hspace{3cm}
    \end{split}\end{equation}
    We denote for brevity $U(t)=D_x[u(t)]h=\sum_{j=1}^nh_j\partial_{x_j}u(t)$, suppress the dependence on $t$, derive in time equation \eqref{EQ:matricial}, then take its product in $H_n$ with $U$:
    \begin{gather}\label{EQ:U}
    \Big\langle \frac{\d}{\d t}U,U\Big\rangle +\langle AU,U\rangle
    +\Big\langle\Theta'_R\big(\|A^{\frac14}u\|^2\big)h\cdot\nabla_x\|A^{\frac14}u\|^2\, B_n(u),U\Big\rangle 
        +\Big\langle D_x[B_n(u)]h,\Theta_R\big(\|A^{\frac14}u\|^2\big)U\Big\rangle=0.
    \end{gather}
    The first term in the left-hand side of equation \eqref{EQ:U} equals the time derivative of $\|U\|^2/2$, while the second is equal to $\|U\|_{V}^2$. As for the third one, we start by evaluating the components of the gradient vector. For $j=1,\dots,n$ it holds
    \[
    \partial_{x_j} \|A^{\frac14}u^{}\|^2=\int_{{\mathcal D}}\partial_{x_j} |A^{1/4}u^{}|^2\d\mathscr L^2
    =2\int_{{\mathcal D}}A^{\frac14}u\cdot A^{\frac14}\partial_{x_j}u\d\mathscr L^2=2\big\langle A^{\frac14}u,A^{\frac14}\partial_{x_j}u\big\rangle,
    \]
    that implies 
    \[
    h\cdot\nabla_x\|A^{\frac14}u\|^2
    =2\langle A^{\frac14}u, A^{\frac14}U\rangle,
    \]
    thus, by the Cauchy-Schwarz inequality and Lemma \ref{LEM:bB}
    \begin{align*}
\Big|\Big\langle\Theta'_R\big(\|A^{\frac14}u\|^2\big)h\cdot\nabla_x\|A^{\frac14}u\|^2\, B_n(u),U\Big\rangle \Big|
    &\leq 2\|\Theta'_R\|^{}_\infty\big|\big\langle A^{\frac14}u,A^{\frac14}U\big\rangle\big|\, \big|\big\langle B_n(u),U\big\rangle \big|\\
    &\leq 2c_0\|\Theta'_R\|^{}_\infty\|A^{\frac14}u\|^3\|A^{\frac14}U\| \|U\|^{}_V.
    \end{align*}
    Analogously 
    \begin{align*}\Big\langle D_x[B_n(u)]h,\Theta_R\big(\|A^{\frac14}u\|^2\big)U\Big\rangle 
    &= \sum_{j=1}^nh_j\big\langle \partial_{x_j}B_n(u),\Theta_R\big(\|A^{\frac14}u\|^2\big)U\big\rangle \\
    &=\sum_{j=1}^nh_jb_n\big(\partial_{x_j}u,u,\Theta_R\big(\|A^{\frac14}u\|^2\big)U\big) +b_n\big(u,\partial_{x_j}u,\Theta_R\big(\|A^{\frac14}u\|^2\big)U\big)\\
    &=b_n\big(U,u,\Theta_R\big(\|A^{\frac14}u\|^2\big)U\big)+b_n\big(u,U,\Theta_R\big(\|A^{\frac14}u\|^2\big)U\big),
    \end{align*}
which entails, by Lemma \ref{LEM:bB} and $\|\Theta_R\|^{}_{\infty}=1$
\begin{align*}\Big|\Big\langle D_x\big[B_n(u)\big]h,\Theta_R\big(\|A^{\frac14}u\|^2\big)U\Big\rangle \Big|\leq 
2c_0\|A^{\frac14}U\|\|A^{\frac14}u\| \|U\|^{}_V.
    \end{align*}
    By plugging all terms inside equation \eqref{EQ:matricial}, we reach the following estimate
    \begin{align*}
    \frac{1}{2}\frac{\d}{\d t}\|U\|^2+\|U\|^2_V
    &\leq 2c_0\big(1\vee\|\Theta'_R\|_\infty\big)\|A^{\frac14}U\|\|U\|^{}_V\big(\|A^{\frac14}u\|^3+\|A^{\frac14}u\|\big)\\
    &\leq K_R\|U\|^{1/2}\,\|U\|^{1/2}_V\|U\|^{}_V\\
    &\leq \frac14 K_R^4\|U\|^2+\frac{3}{4}\|U\|^2_V,
    \end{align*}
    where we employed the interpolation inequality $\|A^{\frac14}U\|\leq \|U\|^{1/2}\,\|U\|^{1/2}_V$ (see Lemma \ref{LEM:sobolevinterpolation}), introduced the function 
    \[
K_R(t):=2c_0\big(1\vee\|\Theta'_R\|_\infty\big)\Big(\sup_{n\in\mN}\|A^{\frac14}u(t)\|^3+\sup_{n\in\mN}\|A^{\frac14}u(t)\|\Big),
    \]
    and iterated the Young inequality. 
    We integrate over the interval $[0,t]$ recalling that $U(0)=D_x[u(0)]h~=~h$
    \begin{equation}\label{EQ:Uappe}
    \|U(t)\|^2+\frac{1}{2}\int_0^t\|U(s)\|^2_V\d s \leq \|h\|^2+\int_0^tK^4_R(s)\|U(s)\|^2\d s,
    \end{equation}
    apply Gr\"onwall's lemma and obtain
    \[
    \|U(t)\|^2\leq \|h\|^2e^{K^4_R(t)}.
    \]
    Inserting back this estimate in equation \eqref{EQ:Uappe}, we reach the thesis for a constant $C_R(t)>0$
    \[
    \int_0^t\|U(s)\|^2_V\d s \leq 2\|h\|^2\bigg[1+\int_0^tK^4_R(s)e^{K^4_R(s)}\d s\bigg]=\|h\|^2C_R(t).
    \]
\end{proof}

\begin{lem}\label{LEM:sigmainduced}
For all $\alpha>0$ the $\sigma$-algebras $\mathscr{B}_{D(A^\a)}$ and $\mathscr{B}_H \cap D(A^\a)$ coincide on $D(A^\a)$.
\end{lem}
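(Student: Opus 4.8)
The plan is to reduce the Borel structure of each space to that of a countable family of coordinate functionals and then observe that this reduction is compatible with taking traces. Fix $\alpha>0$. First I would record that $D(A^\alpha)$, equipped with the inner product $\langle A^\alpha\,\cdot\,,A^\alpha\,\cdot\,\rangle$, is a separable Hilbert space: the vectors $\{\lambda_k^{-\alpha}e_k\}_{k\in\mN}$ form an orthonormal basis of it, and finite rational combinations of the $e_k$ are dense. For $x\in D(A^\alpha)$ one has $A^\alpha x=\sum_k\lambda_k^\alpha\langle x,e_k\rangle e_k$, hence by Parseval $\|x\|_{D(A^\alpha)}^2=\sum_k\lambda_k^{2\alpha}|\langle x,e_k\rangle|^2$.

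The key step is to show $\mathscr B_{D(A^\alpha)}=\sigma(g_k:k\in\mN)$, where $g_k\colon D(A^\alpha)\to\R$ denotes the restriction to $D(A^\alpha)$ of $\langle\,\cdot\,,e_k\rangle$. Each $g_k$ is continuous on $D(A^\alpha)$ (indeed $g_k=\lambda_k^{-\alpha}\langle A^\alpha\,\cdot\,,e_k\rangle$), so $\sigma(g_k:k)\subset\mathscr B_{D(A^\alpha)}$. For the converse, the series identity above shows that $x\mapsto\|x-x_0\|_{D(A^\alpha)}^2=\sup_{N\in\mN}\sum_{k=1}^N\lambda_k^{2\alpha}|g_k(x)-g_k(x_0)|^2$ is a countable supremum of $\sigma(g_k:k)$-measurable functions, hence $\sigma(g_k:k)$-measurable; thus every open ball of $D(A^\alpha)$ lies in $\sigma(g_k:k)$, and separability then gives that every open set, being a countable union of balls, is in $\sigma(g_k:k)$, so $\mathscr B_{D(A^\alpha)}\subset\sigma(g_k:k)$. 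The identical argument in $H$ yields $\mathscr B_H=\sigma(h_k:k)$ with $h_k=\langle\,\cdot\,,e_k\rangle\colon H\to\R$.

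To conclude I would invoke the standard fact that traces commute with generation of $\sigma$-algebras: if a $\sigma$-algebra on a set $X$ is generated by a family of real-valued maps, then its trace on a subset $Y$ is generated by the restrictions of those maps to $Y$. Applying this with $X=H$, $Y=D(A^\alpha)$ and the family $\{h_k\}$, and noting that $h_k|_{D(A^\alpha)}=g_k$, one gets $\mathscr B_H\cap D(A^\alpha)=\sigma(g_k:k)=\mathscr B_{D(A^\alpha)}$, which is the claim. The trace fact itself is a routine good-sets argument: $\sigma(g_k:k)\subset\mathscr B_H\cap D(A^\alpha)$ since each $g_k$ is the restriction of the measurable $h_k$, while the collection $\{E\subset H:E\cap D(A^\alpha)\in\sigma(g_k:k)\}$ is a $\sigma$-algebra containing all $h_k^{-1}(\text{Borel})$, hence containing $\mathscr B_H$. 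The only point needing genuine care is the measurability of open balls in the separable Hilbert space $D(A^\alpha)$, precisely where the series representation of the $D(A^\alpha)$-norm through the coordinates $g_k$ is used; everything else is bookkeeping. An alternative, less hands-on route would be to observe that $D(A^\alpha)\hookrightarrow H$ is a continuous injection between Polish spaces and apply the Lusin–Souslin theorem (the injective continuous image of a Borel set is Borel), but the coordinate argument above is self-contained and elementary.
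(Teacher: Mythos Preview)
Your argument is correct and rests on the same core observation as the paper's: the $D(A^\alpha)$-norm (and hence every open $D(A^\alpha)$-ball) is $\mathscr B_H$-measurable because it can be written as a countable supremum of $H$-continuous functionals, and separability then gives that every $D(A^\alpha)$-open set lies in $\mathscr B_H\cap D(A^\alpha)$.

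The packaging, however, differs. The paper handles the two inclusions asymmetrically: $\mathscr B_H\cap D(A^\alpha)\subset\mathscr B_{D(A^\alpha)}$ is immediate from continuity of the embedding, while for the reverse inclusion the paper writes $\|A^\alpha x\|=\sup_{y\in\Gamma\setminus\{0\}}|\langle x,A^\alpha y\rangle|/\|y\|$ over a countable $\Gamma\subset D(A^\alpha)$ dense in $H$, using self-adjointness of $A^\alpha$ rather than Parseval. You instead reduce \emph{both} $\sigma$-algebras to the single $\sigma$-algebra generated by the coordinate functionals $\langle\,\cdot\,,e_k\rangle$, and then invoke the general fact that traces commute with generation. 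Your route is a bit more modular and makes explicit use of the eigenbasis (natural here, since $A$ has compact resolvent), whereas the paper's route works for any self-adjoint $A^\alpha$ without appealing to a spectral basis. Either way the substantive step is identical.
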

\begin{proof}
    Let $\iota: D(A^\a)  \to H$ denote the linear continuous embedding and $\tau_{ D(A^\a)  }, \tau_H$ denote the topologies on $ D(A^\a) , H$ induced by the norms $\|\cdot\|^{}_{ D(A^\a)  }$ and $\|\cdot\|$, respectively. By continuity of $\iota$, we have $\iota^{-1}(\tau_H)\subset \tau_{ D(A^\a) }$, which means that the topology $\tau_{ D(A^\a)  }$ on $ D(A^\a)  $ is finer then the topology induced from $H$ via $\iota$. As a consequence
    \begin{equation*}
     \mathscr B_{ D(A^\a)  }=\sigma\big(\tau_{ D(A^\a)  }\big)\supset \sigma\big(\iota^{-1}(\tau_H)\big)=\iota^{-1}\big(\sigma(\tau_H)\big)=\iota^{-1}(\mathscr B_H)=\mathscr B_H\cap  D(A^\a)  ,
     \end{equation*}
     to wit the $\sigma$-algebra induced from $H$ and denoted by $\mathscr B_H\cap  D(A^\a) $ is a sub-$\sigma$-algebra of $\mathscr B_{ D(A^\a) }$. 
     
     In order to invert the above inclusion we can not proceed in the same way. Should indeed the two topologies $\tau_{ D(A^\a)  }$,$\tau_H$ coincide, then the respective norms would be equivalent, which is false. Nevertheless, if we were able to prove that the open sets in $D(A^\a)$ are Borel sets in $H$, then we would easily reach the thesis by 
     \[
     \mathscr B_{ D(A^\a) }=\sigma\big(\tau_{ D(A^\a) }\big)\subset \sigma\big(\mathscr B_H\cap{ D(A^\a) }\big)=\mathscr B_H\cap D(A^\a) .
     \]
     In order to show that $\tau_{ D(A^\a) }\subset \mathscr B_H\cap { D(A^\a) }$, it is sufficient to prove that the $\|\cdot\|_{ D(A^\a) }$-norm on $ D(A^\a) $ is a $\mathscr B_H$-measurable function. 
     Let $\Gamma\subset  D(A^\a) $ be countable and dense in $H$ (such collection exists because $H$ is separable and $ D(A^\a) $ is dense in $H$). We have
    \[
\|x\|^{}_{ D(A^\a) }=\|A^{\a}x\|=\sup_{y\in H\setminus \{0\}}\frac{|\langle A^{\a}x,y\rangle|}{\|y\|}
=\sup_{y\in \Gamma\setminus \{0\}}\frac{|\langle A^{\a}x,y\rangle|}{\|y\|}
=\sup_{y\in \Gamma\setminus \{0\}}\frac{|\langle x,A^{\a}y\rangle|}{\|y\|}
\qquad \forall \, x\in  D(A^\a) ,
\]
where the second equality follows by the characterization of the norm in Hilbert spaces, the third by the density of $\Gamma$ in $H$ and the continuity in $H\setminus\{0\}$ of $y\mapsto {|\langle A^{\a}x,y\rangle|}/{\|y\|}$, while the last equality follows by self-adjointness of $A^{\a}$ (see Section \ref{SEC:operators}). For every $y\in \Gamma\setminus \{0\}$ we define the function 
\[
p_y:H\to \R:x\mapsto p_y(x)=\frac{\langle x, A^{\a}y\rangle}{\|y\|},
\]
and we observe that it is a linear and bounded functional, thus continuous, thus $\mathscr B_H$-measurable. 
We therefore conclude that $\|\cdot\|^{}_{ D(A^\a) }=\sup_{y\in \Gamma\setminus \{0\}}|p_y(\cdot)|$ is $\mathscr B_H$-measurable as it is the supremum of a countable family of $\mathscr{B}_H$-measurable functions.
\end{proof}


\noindent \textbf{Acknowledgements:} 
We express our sincere gratitude to Professor Enrico Priola for his insightful contributions and stimulating discussions on the subject. This research activity was carried out as part of the PRIN 2022 project $``$Noise in fluid dynamics and related models$"$. The author is also member of the $``$Gruppo Nazionale per l'Analisi Matematica, la Probabilità e le loro Applicazioni (GNAMPA)$"$, which is part of the  $``$Istituto Nazionale di Alta Matematica (INdAM)$"$.\\[3mm]


\printbibliography[heading=bibintoc]

\end{document}